\documentclass[letterpaper]{amsart}

\newtheorem{theorem}{Theorem}[section]
\newtheorem{lemma}[theorem]{Lemma}

\newtheorem{proposition}[theorem]{Proposition}
\newtheorem{corollary}[theorem]{Corollary}

\theoremstyle{definition}
\newtheorem{definition}[theorem]{Definition}
\newtheorem{example}[theorem]{Example}

\theoremstyle{remark}
\newtheorem{remark}[theorem]{Remark}

\usepackage{graphicx}
%%%%%%%%%%
\usepackage{color}
%%%%%%%%%
\usepackage{amsmath}
\usepackage{amsfonts}
\usepackage{amssymb}
\usepackage{epsfig}
\usepackage{rotating}
\usepackage{graphics}
\newcommand{\be}{\begin{equation}}

\newcommand{\ee}{\end{equation}}

\newcommand{\al}{\alpha}

\newcommand{\bet}{\beta}

\newcommand{\Om}{\Omega}

\newcommand{\om}{\omega}

\newcommand{\la}{\lambda}

%{\gotyk}

%\newcommand{\bz}{\bar{\zeta}}

%\newcommand{\w}{\omega}

\newcommand{\dz}{\wedge}

\newcommand{\ba}{\begin{array}}

\newcommand{\ea}{\end{array}}

\newcommand{\beq}{\begin{eqnarray}}

\newcommand{\eeq}{\end{eqnarray}}

\newtheorem{lm}{lemma}

\newtheorem{thee}{theorem}

\newtheorem{proo}{proposition}

\newtheorem{co}{corollary}

\newtheorem{rem}{remark}

\newtheorem{deff}{definition}

\newcommand{\bd}{\begin{deff}}

\newcommand{\ed}{\end{deff}}

\newcommand{\bl}{\begin{lm}}

\newcommand{\el}{\end{lm}}

\newcommand{\bp}{\begin{proo}}

\newcommand{\ep}{\end{proo}}

\newcommand{\bt}{\begin{thee}}

\newcommand{\et}{\end{thee}}

\newcommand{\bc}{\begin{co}}

\newcommand{\ec}{\end{co}}

\newcommand{\brm}{\begin{rem}}

\newcommand{\erm}{\end{rem}}

\newcommand{\der}{{\rm d}}

\hyphenation{Pa-wel}

\hyphenation{Nu-row-ski}

\hyphenation{And-rzej}

\hyphenation{Traut-man}

\hyphenation{Je-rzy}

\hyphenation{Le-wan-dow-ski}

\hyphenation{Car-tan}

\hyphenation{Car-tan-Pet-rov-Pen-rose}

\hyphenation{Pen-rose}

\hyphenation{or-tho-go-nal}

\hyphenation{comp-lex}

\hyphenation{Pet-rov}

\hyphenation{Euc-lid-ean}

\hyphenation{ge-om-etry}

\hyphenation{Rie-man-nian}

\hyphenation{Ein-stein}

\hyphenation{Ka-te-dra}

\hyphenation{Me-tod}

\hyphenation{Ma-te-ma-tycz-nych}

\hyphenation{Fi-zy-ki}

\hyphenation{Uni-wer-sy-tet}

\hyphenation{War-szaw-ski}

\hyphenation{War-sza-wa}

%%******************************** Rod's preamble below **********\\

\usepackage{t1enc}
\def\frak{\mathfrak}

\newcommand{\newc}{\newcommand}

\let\ccdot\cdot
\def\cdot{\hbox to 2.5pt{\hss$\ccdot$\hss}}

\newc{\aR}{\mbox{\boldmath{$ R$}}}
\newc{\aS}{\mbox{\boldmath{$ S$}}}
\newc{\aT}{\mbox{\boldmath{$ T$}}}
\newc{\aW}{\mbox{\boldmath{$ W$}}}

\newc{\aK}{\mbox{\boldmath{$ K$}}}
\newc{\aL}{\mbox{\boldmath{$ L$}}}
%******************** Environments **********************************

%**************For D-operators and AHS section **********************

\newcommand{\bbC}{\mathbb{C}}
\newcommand{\bbR}{\mathbb{R}}

\usepackage{amssymb}
\usepackage{amscd}

%************** Truly new 4 here **********************************

%\newc{\aR}{\mbox{\boldmath{$ R$}}}

\newcommand{\bma}{\begin{pmatrix}}
\newcommand{\ema}{\end{pmatrix}}
\newcommand{\tc}{(\Om_2-\bar{\Om}_2)}
\newcommand{\ap}{(\Om_2+\bar{\Om}_2)}
\newcommand{\tj}{\theta^1}
\newcommand{\td}{\theta^2}
\newcommand{\ttr}{\theta^3}

%************************* Old News! *********************************

% Operators

        % scriptsize ambient
                                                           % fundamental field

% Other

                          %Probably change later

                   % Kill these if unused

\newc{\obstrn}[2]{B^{#1}_{#2}}

%\newcommand{\rpl}                         % +>
%{\mbox{$
%\begin{picture}(12.7,8)(-.5,-1)
%\put(0,0){$+$}
%\put(4.2,0){$>$}
%\end{picture}$}}

% Layout

\newcommand{\rpl}                         % +) or <+
{\mbox{$
\begin{picture}(12.7,8)(-.5,-1)
\put(0,0.2){$+$}
\put(4.2,2.8){\oval(8,8)[r]}
\end{picture}$}}

\newcommand{\lpl}                         % (+ or +>
{\mbox{$
\begin{picture}(12.7,8)(-.5,-1)
\put(2,0.2){$+$}
\put(6.2,2.8){\oval(8,8)[l]}
\end{picture}$}}

\usepackage{ifthen}

\newc{\tensor}[1]{#1}
\newc{\Mvariable}[1]{\mbox{#1}}
\newc{\down}[1]{{}_{#1}}
\newc{\up}[1]{{}^{#1}}

%*************more new************************************

%\newc{\tensor}[1]{#1}
%
%\newc{\Mvariable}[1]{\mbox{#1}}
%
%\newc{\down}[1]{{}_{
%\ifthenelse{\equal{#1}{;}}{|}{#1}}}
%
%\newc{\up}[1]{{}^{#1}}
%\newc{\C}{C}
\newc{\JulyStrut}{\rule{0mm}{6mm}}
\newc{\midtenPan}{\mbox{\sf S}}
\newc{\midten}{\mbox{\sf T}}
\newc{\midtenEi}{\mbox{\sf U}}
\newc{\ATen}{\mbox{\sf E}}
\newc{\BTen}{\mbox{\sf F}}
\newc{\CTen}{\mbox{\sf G}}

%************************************************************

% JAN'S PLAIN MACROS
%***************************************
\def\sideremark#1{\ifvmode\leavevmode\fi\vadjust{\vbox to0pt{\vss% the remark
 \hbox to 0pt{\hskip\hsize\hskip1em%                          will appear only
 \vbox{\hsize3cm\tiny\raggedright\pretolerance10000%          on the side
 \noindent #1\hfill}\hss}\vbox to8pt{\vfil}\vss}}}%
                                                   %          in 3cm

                                                   %          wide box
                                                   %

\newcommand{\vper}{{\mathcal V}^\perp}
\newcommand{\vv}{{\mathcal V}}
\newcommand{\bgw}{{\textstyle \bigwedge}}

%%**************** Rod's preamble above to *'s *******************

\numberwithin{equation}{section}

%    Absolute value notation

%    Blank box placeholder for figures (to avoid requiring any
%    particular graphics capabilities for printing this document).

%%%%%%%%%%%%%
%\usepackage[notcite,notref]{showkeys}
%%%%%%%%%%%%%

%%%%%%%%%%%%%%%%%%%%%%%%MB

%numeracja listy {i,ii,iii,...}
\newcounter{romenumi}
\newcommand{\labelromenumi}{(\roman{romenumi})}

%numeracja lemma.{a,b,c}
%\newcounter{lemenumi}
%\newcommand{\labelemenumi}{\arabic{section}.\arabic{lemma}-\alph{lemenumi}.}
%\newcommand{\labelemenumi}{(\alph{lemenumi})}
%\renewcommand{\thelemenumi}{\arabic{section}.\arabic{lemma}-\alph{lemenumi}}
%\renewcommand{\thelemenumi}{(\alph{lemenumi})}
%\newenvironment{lemlist}{\begin{list}{\labelemenumi}{\usecounter{lemenumi}}}{\end{list}}

%%%%%%%%%%%%%%%%%%%%%%%%%%%%%%%%%

\begin{document}
\title{Intrinsic geometry of oriented congruences in three dimensions}
\vskip 1.truecm
\author{C. Denson Hill} \address{Department of Mathematics, Stony
  Brook University, Stony Brook, N.Y. 11794, USA}
\email{dhill@math.sunysb.edu}  
\author{Pawe\l~ Nurowski} \address{Instytut Fizyki Teoretycznej,
Uniwersytet Warszawski, ul. Hoza 69, Warszawa, Poland}
\email{nurowski@fuw.edu.pl}
\thanks{This research was supported by
the Polish grant 1P03B 07529}
\thanks{MSC: 32V05; 53A55; 83C15} 
\thanks{Keywords: special CR manifolds; local invariants; Bach-flat Lorentzian metrics}

\date{\today}

\begin{abstract}
Starting from the classical notion of an oriented congruence (i.e. a foliation by oriented curves) in $\bbR^3$, we abstract the notion of an \emph{oriented congruence structure}. This is a 3-dimensional CR manifold $(M,H, J)$ with a preferred splitting of the tangent space $TM={\mathcal V}\oplus H$. We find all local invariants of such structures using Cartan's equivalence method refining Cartan's classification of 3-dimensional CR structures. We use these invariants and perform Fefferman like constructions, to obtain interesting Lorentzian metrics in four dimensions, which include explicit Ricci-flat and Einstein metrics, as well as not conformally Einstein Bach-flat metrics.  
\end{abstract}
\maketitle
%*************
\tableofcontents
\newcommand{\bbS}{\mathbb{S}}
\newcommand{\sog}{\mathbf{SO}}
\newcommand{\slg}{\mathbf{SL}}
\newcommand{\og}{\mathbf{O}}
\newcommand{\soa}{\frak{so}}
\newcommand{\sla}{\frak{sl}}
\newcommand{\sua}{\frak{su}}
\newcommand{\dr}{\mathrm{d}}
\newcommand{\sug}{\mathbf{SU}}
\newcommand{\gat}{\tilde{\gamma}}
\newcommand{\Gat}{\tilde{\Gamma}}
\newcommand{\thet}{\tilde{\theta}}
\newcommand{\Thet}{\tilde{T}}
\newcommand{\rt}{\tilde{r}}
\newcommand{\st}{\sqrt{3}}
\newcommand{\kat}{\tilde{\kappa}}
\newcommand{\kz}{{K^{{~}^{\hskip-3.1mm\circ}}}}
\newcommand{\bv}{{\bf v}}
\newcommand{\di}{{\rm div}}
\newcommand{\curl}{{\rm curl}}
%*************
\section{Introduction}
We study the local differential geometry of oriented 
congruences in 3-dimensional manifolds. This geometry turns out to 
be very closely related to local 3-dimensional \emph{CR geometry}. The latter 
can be traced back to Elie Cartan's 1932 papers \cite{cartan}, in which he used his \emph{equivalence method} to determine the \emph{full set of local invariants} of locally embedded 3-dimensional strictly pseudoconvex 
CR manifolds.

This paper should be regarded as an extension and refinement of Cartan's work. This is because a 3-dimensional manifold with an oriented congruence on it is an abstract 3-dimensional CR manifold with an additional structure: \emph{a preferred splitting} (see Section \ref{orjco}). This leads to a notion of local equivalence of such structures, which is more strict that than of Cartan. Hence the (coarse) CR equivalence classes of Cartan split into a \emph{fine structure}; as a result we produce many \emph{new local invariants}, corresponding to many 
more \emph{nonequivalent} structures than in Cartan's situation.

From this perspective, our paper may be also placed in the realm of 
\emph{special geometries}, i.e. geometries with an additional structure. These kind of geometries, such as, for example, special Riemannian geometries (hermitian, K\"ahler, $G_2$, etc.), find applications in mathematical physcis (e.g. string theory). The starting point of this paper also comes from physics: a congruence 
in $\bbR^3$ (i.e. a foliation of $\bbR^3$ by curves) is a notion that appears in hydrodynamics (velocity flow), Newtonian gravity and 
electrodynamics (field strength lines). These branches of physics have 
distinguished the two main invariants of such foliations, which are related to the classical notions of \emph{twist} and \emph{shear}. One of the byproducts of our 
analysis is also a refinement of these physical concepts.      

Contemporary physicists, because of the dimension of spacetime, 
have been much more interested in congruences in \emph{four} dimensions. Such 
congruences live in \emph{Lorentzian} manifolds, and as such, may 
be \emph{timelike}, \emph{spacelike} or \emph{null}. It turns out that 
the \emph{null} congruences in spacetimes, which are tangent to \emph{unparametrized 
geodesics without shear}, locally define a 3-manifold, which has a CR structure on it. One of the outcomes of this paper is that we found connections between  
properties of \emph{four} dimensional spacetimes admitting null and shearfree 
congruences, with their corresponding \emph{three} dimensional CR manifolds, and \emph{our new} invariants 
of the classical congruences in \emph{three} dimensions. In Sections \ref{cab} and 
\ref{cabb}, in particular, we use these \emph{three} dimensional invariants, to construct interesting families of Lorentzian metrics with shearfree congruences in \emph{four} dimensions (including metrics which are Ricci flat or Einstein, Bach flat but not conformal to Einstein, etc.).

Throughout the paper we will always have a nondegenerate (not
neccessarily Riemannian) metric $g_{ij}$ and its inverse
$g^{ij}$. This enables us to freely raise and lower indices at our
convenience. We use the Einstein summation convention.
We also denote by
$\omega_1\omega_2=\tfrac{1}{2}(\omega_1\otimes\omega_2+\omega_2\otimes\omega_1)$
the symmetrized tensor product of two 1-forms $\omega_1$
and $\omega_2$. In this paper we shall be working in the smooth
category; i.e., everything will be assumed to be $C^\infty$, without
mentioning it explicitly in what follows.

A large part of the paper is based on lengthy calculations, which are required by our main tool, namely \emph{Cartan's equivalence method}. These 
calculations were checked by the symbolic calculation program Mathematica. 
The structure of the paper is reflected in the table of contents.

We have been
inspired by our contact with Andrzej Trautman, Jacek Tafel and 
Jerzy Lewandowski, whom we thank warmly. We also thank the Mathematisches Institute der Humboldt-Universit\"at zu Berlin, and Ilka Agricola and J\"urgen Leiterer, in particular, for their kind hospitality during the preparation of this paper.

\section{Classical twist and shear}\label{cts}
In a simply connected domain $U$ of Euclidean space $\bbR^3$, equipped
with the flat metric
$g_{ij}=\delta_{ij}$, we
consider a smooth foliation by uniformly oriented curves. Let $\bv$ be
a vector field $\bv=v^i\nabla_i$ tangent to the foliation, consistent with the
orientation.

%Further properties of $\bv$ can be discussed by analyzing the first
%derivatives of $\bv$ in all possible directions.
We denote the total symmetrization by round brackets on the indices,
the total antisymetrization by square brackets on the indices, and use
$\epsilon_{ijk}=\epsilon_{[ijk]}$, $\epsilon_{123}=1$. We have the
following classical decomposition
\be
\nabla_iv_j=\alpha_{ij}+\sigma_{ij}+\tfrac{1}{3}\theta g_{ij},\label{deco}
\ee
where
$$\alpha_{ij}=\nabla_{[i}v_{j]}=\tfrac{1}{2}\epsilon_{ijk}(\curl~\bv)^k,$$
$$\theta=g^{ij}\nabla_iv_j=\di~\bv,$$
$$\sigma_{ij}=\nabla_{(i}v_{j)}-\tfrac{1}{3}\theta g_{ij}.$$
The decomposition (\ref{deco}) defines three functions, depending on the
choice of $\bv$, which can be used to characterize the foliation.
One of these functions is
the divergence $\theta$, also called the
{\it expansion of the vector field} $\bv$. It merely characterizes the
vector field $\bv$, hence it is {\it not} interesting as far the
properties of the foliation is concerned. The second function is
$$\alpha=|\alpha_{ij}|=\sqrt{g^{ik}g^{jl}\alpha_{ij}\alpha_{kl}},$$
the norm of the antisymmetric part $\alpha_{ij}$, called the
{\it twist of the vector field} $\bv$.

Vanishing of twist,  the {\it twist-free} condition $\alpha= 0$, is
equivalent to $\curl ~\bv= 0$. Although this condition is
$\bv$-dependent, it has a clear geometric meaning for the foliation.
Indeed,  a  vector field $\bv$ with
vanishing twist may be
represented by a gradient: $\bv=\nabla f$ for some function
$f:U\to\bbR$. In such a case the level
surfaces of the function $f$ define a foliation of $U$ with
2-dimensional leaves orthogonal to $\bv$. This can be rephrased by
saying that the distribution $\vper$ of 2-planes,
perpendicular to $\bv$, is integrable.

The third function obtained from the decomposition (\ref{deco}) is
$$\sigma=|\sigma_{ij}|=\sqrt{g^{ik}g^{jl}\sigma_{ij}\sigma_{kl}},$$ the norm of
the trace-free symmetric part $\sigma_{ij}$, called the
{\it shear of the vector field} $\bv$.

%The notion of {\it shear} comes from elasticity theory.
Regardless of whether or not $\vper$ is integrable, the condition of
{\it vanishing shear} $\sigma= 0$ is equivalent to
$\nabla_{(i}v_{j)}=\tfrac{1}{3}\theta g_{ij}$. Recalling that the Lie
derivative
%${\mathcal L}_\bv g_{ij}$ is
${\mathcal L}_\bv g_{ij}=\nabla_{(i}v_{j)}$, we see
that the shear-free condition for $\bv$ is the condition that this Lie
derivative be proportional to the metric. Thus
$\sigma=0$ if and only if ${\mathcal L}_\bv g_{ij}=h g_{ij}.$ This
condition again is $\bv$ dependent. However, it implies the following
geometric property of the foliation: the metric $g_{|\vper}$ induced
by $g_{ij}$ on the distribution $\vper$ is
conformally preserved when Lie transported along $\bv$. To say it
differently we introduce a complex structure $J$ on each 2-plane of
$\vper$. This is possible since each such plane is
equipped with a metric $g_{|\vper}$ and the orientation
induced by the orientation of $\bv$. Knowing this, we define $J$ on
each 2-plane as a rotation by
$\tfrac{\pi}{2}$, using the right hand rule. Now we can rephrase the statement about conformal
preservation of the metric $g_{|\vper}$ during Lie
transport along $\bv$, by saying that it is equivalent to the
constancy of $J$ under the Lie
transport along $\bv$.

The above notions of expansion, twist and shear are the classical
notions of elasticity theory. As we have seen, they are not invariants
of the foliation by curves, because they depend on the choice of the
vector field $\bv$. Nonetheless they do carry some invariant
information. One of the main purposes of this paper is to find all of the local
invariants of the intrinsic geometry associated with such foliations. 
With this classical motivation we now pass to the subject proper of this paper.

\section{Oriented congruences}\label{orjco}
Consider a smooth oriented real 3-dimensional manifold $M$ equipped with a
Riemannian metric $g$. Assume that $M$ is smoothly foliated by
uniformly oriented curves. Such a
foliation is called an {\it oriented congruence}. Note that we are
{\it not} assuming that the curves in the congruence are geodesics for
the metric $g$.

Our first observation is that $M$ has the structure of a smooth
abstract {\it CR manifold}. To see this we introduce the oriented line
bundle ${\vv}$, a subbundle of $TM$, consisting of the tangent lines to the
foliation. Using the metric we also have $\vper$, the
2-plane subbundle of $TM$ consisting of the planes orthogonal to the
congruence. These 2-planes are oriented by the right hand rule and are
equipped with the induced metric $g_{|\vper}$. Hence
$\vper$ is endowed with the complex structure operator $J$ as we
explained in the previous section. The pair $(\vper,J)$, by the very
definition, equips $M$
with the structure of an abstract 3-dimensional CR manifold. This CR
manifold has an additional structure consisting in the prefered
splitting $TM=\vper\oplus\vv$. It also defines an equivalence class $[g]$ of
{\it adapted} Riemannian metrics $g'$ in which $g'(\vv,\vper)=0$ and such that $g'_{|\vper}$
is hermitian for $J$. Thus, an oriented congruence in $(M,g)$ defines
a whole class of Riemannian manifolds $(M,[g])$ which are adapted to it.

Conversely, given an oriented abstract 3-dimensional CR manifold $(M,H,J)$ with a
distinguished line subbundle $\vv$ such that $\vv\cap H=\{0\}$, we may
reconstruct the oriented congruence. The curves of this congruence
consist of the trajectories of $\vv$. They are
oriented by the right hand rule applied in such a way that it agrees
with the orientation of $H$ determined by $J$.
Here $J: H\to H$ and $J^2=-$id. Since T$M=H\oplus\vv$ we recover also the equivalence class $[g]$
of adapted Riemannian metrics $g'$ in which $g(\vv,H)=0$ and such that
$g'_{|H}$ is hermitian for $J$.

We summarize with: let $M$ be an oriented 3-dimensional
manifold, then
\begin{proposition}
There is a one to one correspondence between oriented
congruences on $M$ with a distinguished orthogonal distribution
$\vv^\perp$, and CR structures $(H,J)$ on $M$ with a
distinguished line
subbundle $\vv$ such that $TM=H\oplus\vv$.
\end{proposition}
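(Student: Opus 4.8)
The plan is to establish the bijection by constructing the two maps explicitly and checking they are mutually inverse. The key observation is that the proposition merely asks us to package the two constructions already carried out in the preceding discussion: one going from an oriented congruence (with its orthogonal distribution) to a CR structure with a distinguished line field, and one going back. So the proof is essentially a matter of verifying that these constructions are well defined and inverse to each other, rather than proving anything genuinely new.

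First I would fix the setting: let $M$ be an oriented $3$-dimensional manifold, and note that on the congruence side the data is an oriented congruence together with a choice of $2$-plane distribution $\vper$ declared to be the orthogonal complement of the tangent lines $\vv$; equivalently, by the adapted-metric discussion above, this is the data of the line field $\vv$ together with the complementary distribution $H:=\vper$. On the CR side the data is a pair $(H,J)$ together with a line subbundle $\vv$ satisfying $TM=H\oplus\vv$. The forward map sends the congruence data to $(H,J)$, where $H=\vper$ and $J$ is the rotation by $\tfrac{\pi}{2}$ on each $2$-plane defined using the induced metric $g_{|\vper}$ and the right-hand-rule orientation, exactly as in Section \ref{cts}. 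The backward map sends $(H,J,\vv)$ to the congruence whose curves are the trajectories of $\vv$, with $\vper:=H$; the orientation of the curves is fixed by the right-hand rule so as to agree with the orientation of $H$ determined by $J$, and this recovers the adapted class $[g]$.

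The main steps are then: (i) check the forward map is well defined, i.e. that $(\vper,J)$ genuinely is a CR structure with $\vv\cap H=\{0\}$ and $TM=H\oplus\vv$ — this is immediate since $\vv$ is transverse to its orthogonal complement for any adapted metric, and $J^2=-\mathrm{id}$ by construction; (ii) check the backward map is well defined, i.e. that integrating the line field $\vv$ produces an oriented foliation by curves and that the choice of $[g]$ making $g'(\vv,H)=0$ with $g'_{|H}$ hermitian is consistent; and (iii) verify the two compositions are the identity. Going congruence $\to$ CR $\to$ congruence returns the same line field $\vv$ and the same $2$-planes $\vper=H$, hence the same foliation with the same orientation. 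Going CR $\to$ congruence $\to$ CR returns $H$ and the \emph{same} $J$, because $J$ is uniquely determined by $g_{|H}$ together with the orientation, and both the metric class and the orientation are preserved under the round trip.

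The one point requiring genuine care — the part I expect to be the main obstacle — is the compatibility of orientations in step (iii). One must check that the orientation conventions match up: on the forward map the complex structure $J$ is built from the orientation of $\vv$ (via the right-hand rule), while on the backward map the orientation of the congruence curves is recovered from $J$ and the ambient orientation of $M$. The claim that these are inverse operations hinges on the fact that an orientation of $M$ together with an orientation of the $2$-plane distribution $H$ uniquely determines an orientation of the transverse line $\vv$, and conversely. Since $M$ is assumed oriented throughout, this pins down all the signs, and the right-hand-rule conventions are chosen precisely so that the forward and backward orientation assignments cancel. Once this orientation bookkeeping is confirmed, the remaining verifications are routine, and the correspondence follows.
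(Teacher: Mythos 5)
Your proposal is correct and follows essentially the same route as the paper: the paper's ``proof'' is precisely the two constructions given in the surrounding discussion (congruence $\to$ $(\vper,J)$ via the right-hand-rule rotation, and $(H,J,\vv)$ $\to$ trajectories of $\vv$ with orientation fixed by $J$ and the ambient orientation), and your verification that they are mutually inverse, including the orientation bookkeeping, is exactly what the paper leaves implicit.
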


We now pass to the dual formulation. Given a CR structure $(H,J)$ with a
prefered splitting $TM=H\oplus\vv$, we define $H^0$ to be the anihilator
of $H$ and $\vv^0$ to be the anihilator of $\vv$. Note that $H^0$ is a
{\it real line} subbundle of $T^*M$ and $\vv^0$ is a 2-plane subbundle of
$T^*M$. This $H^0$ is known as the {\it characteristic bundle}
associated with the CR structure. $\vv^0$ is equipped with the complex
structure $J^*$, the adjoint of $J$ with respect to the natural
duality pairing. The complexification $\bbC \vv^0$ splits into $\bbC
\vv^0=\vv^0_+\oplus\vv^0_-$,  where $\vv^0_\pm$ are the $\mp i$ eigenspaces of
$J^*$. Both spaces $\vv^0_\pm$ are {\it complex line} subbundles of
the complexification $\bbC T^*M$ of the cotangent bundle. $\vv^0_-$ is
the complex conjugate of $\vv^0_+$, $\overline{\vv}^0_\pm=\vv^0_{\mp}$.

The reason for passing to the dual formulation is that we want to
apply Cartan's method of equivalence to determine the local
invariants of an oriented congruence in $M$. For this we need a local
nonzero section $\lambda$ of $H^0$ and a local nonzero section $\mu$ of
$\vv^0_+$. Then $\lambda\dz\mu\dz\bar{\mu}\neq 0$. Any other local section
$\lambda'$ of $H^0$ and any other local section $\mu'$ of $\vv^0_+$ are
related to $\lambda$ and $\mu$ by
$\lambda'=f \lambda$ and $\mu'=h\mu$, for some real function $f$
and some complex function $h$. This motivates the following
definition:
\begin{definition}\label{d3}
A {\it structure} $(M,[\lambda,\mu])$
{\it of an oriented congruence} on a 3-dimensional manifold
$M$ is an equivalence class of pairs of 1-forms $[\lambda,\mu]$ on $M$
satisfying the following conditions:
\begin{itemize}
\item[{\it (i)}] $\lambda$ is real, $\mu$ is complex
\item[{\it (ii)}] $\lambda\dz\mu\dz\bar{\mu}\neq 0$ at each point of $M$
\item[{\it (iii)}] two pairs $(\lambda,\mu)$ and $(\lambda',\mu')$ are equivalent iff
  there exist nonvanishing functions $f$
  (real) and $h$ (complex) on $M$ such that
\be
\lambda'=f\lambda,\quad\quad\quad \mu'=h\mu.\label{ec}
\ee
\end{itemize}
We say that two such structures
$(M,[\lambda,\mu])$ and  $(M',[\lambda',\mu'])$ are (locally)
{\it equivalent}
iff there exists a (local) diffeomorphism $\phi: M\to M'$ such that
\be
\phi^*(\lambda')=f\lambda,\quad\quad\quad \phi^*(\mu')=h\mu\label{efi}
\ee
for some nonvanishing functions $f$ (real) and $h$ (complex) on $M$.
If such a diffeomorphism is from $M$ to $M$ it is called an {\it
  automorphism} of $(M,[\lambda,\mu])$. The full set of automorphisms
is called the {\it group of automorphisms} of $(M,[\lambda,\mu])$.
A vector field $X$ on $M$ is
called a {\it symmetry} of $(M,[\lambda,\mu])$
iff
$${\mathcal L}_X\lambda=f\lambda,\quad\quad\quad {\mathcal
  L}_X\mu=h\mu.$$
Here the functions $f$ (real) and $h$ (complex) are not required to be
  nonvanishing; they may even vanish identically. Observe, that if $X$
  and $Y$ are two symmetries of $(M,[\lambda,\mu])$ then their
  commutator $[X,Y]$ is also a symmetry. Thus, we may speak about the
  {\it Lie algebra of symmetries}.
\end{definition}

\begin{remark}\label{carrem}
Note that E. Cartan \cite{cartan} would define a 3-dimensional CR
manifold as a structure $(M,[\lambda,\mu])$ as above, with the
exception that condition {\it (iii)} is weakend to
\begin{itemize}
\item[${\it (iii)}_{CR}$] two pairs $(\lambda,\mu)$ and $(\lambda',\mu')$
  are equivalent iff
  there exist nonvanishing functions $f$
  (real) and $h$ (complex) and a complex function $p$ on $M$ such that
$$\lambda'=f\lambda,\quad\quad\quad \mu'=h\mu+p \lambda.$$
\end{itemize}
In this sense our structure of an oriented congruence
 $(M,[\lambda,\mu])$ is a CR manifold on which there is an
additional structure. In particular the diffeomorphisms $\phi$ that provide an
 equivalence of our structures are special cases of CR
 diffeomorphisms,
which for CR structures defined {\it a la} Cartan by ${\it (iii)_{CR}}$ are $\phi:M\to M'$
 such that
$\phi^*(\lambda')=f\lambda$, $\phi^*(\mu')=h\mu+p\lambda$. In terms of
 the nowadays definition of a CR manifold as a triple $(M,H,J)$, this
 last Cartan condition is equivalent to the CR map requirement:
$\der\phi\circ J=J \circ\der\phi$ and similarly for $\phi^{-1}$.
\end{remark}
\begin{remark}
Two CR structures which are not equivalent in the sense of
Cartan \cite{cartan} are also not equivalent, in our sense, as oriented congruences;
but not vice versa. On the other hand, every symmetry of an oriented congruence $(M,[\lambda,\mu])$
is a CR symmetry of the CR structure determined by
$[\lambda,\mu]$ via ${\it (iii)}_{CR}$; and not vice versa.
\end{remark}
We omit the proof of the following easy proposition.
\begin{proposition}\label{adda}
A given structure $(M,[\lambda,\mu])$
determines a CR structure $(M,H,J)$ with the preferred splitting
$TM=H\oplus\vv$, where $H$ is the annihilator
of $Span_\bbR (\lambda)$ and $\bbC\vv$ is the annihilator of
$Span_\bbC(\mu)\oplus Span_\bbC(\bar{\mu})$. The class of adapted
Riemannian metrics $[g]$ is parametrized by two arbitrary nonvanishing
functions $f$ (real) and $h$ (complex) and given by
$$g=f^2 \lambda^2+2|h|^2\mu\bar{\mu}.$$
\end{proposition}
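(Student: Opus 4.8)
The plan is to reduce everything to a computation in the coframe $(\lambda,\mu,\bar\mu)$ together with its dual frame, and to verify at each stage that the object produced is independent of the representative chosen within the class $[\lambda,\mu]$. Fix a representative $(\lambda,\mu)$. Condition \emph{(ii)} says $\lambda\dz\mu\dz\bar\mu\neq 0$, so $(\lambda,\mu,\bar\mu)$ is a coframe for $\bbC T^*M$; let $(e_0,e_+,e_-)$ be the dual frame, characterised by $\lambda(e_0)=1$, $\mu(e_+)=1$, $\bar\mu(e_-)=1$ with all remaining pairings zero. Since $\lambda$ is real and $\bar\mu$ is the conjugate of $\mu$, one has $e_0=\bar e_0$ and $e_-=\bar e_+$. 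First I would set $H=\ker\lambda$ and define $\vv$ by $\bbC\vv=\ker\mu\cap\ker\bar\mu=\Span_\bbC(e_0)$. Because $\ker\mu\cap\ker\bar\mu$ is invariant under complex conjugation, it is the complexification of the real line $\vv=\Span_\bbR(e_0)$; this is the first small point to check. Replacing $(\lambda,\mu)$ by $(f\lambda,h\mu)$ with $f,h$ nowhere zero leaves $\ker\lambda$, $\ker\mu$ and $\ker\bar\mu$ unchanged, so $H$ and $\vv$ depend only on the class.

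Next I would recover $J$. Setting $L=\ker\lambda\cap\ker\bar\mu=\Span_\bbC(e_+)$, one checks $L\cap\bar L=0$ and $L\oplus\bar L=\bbC H$, so declaring $J$ to be multiplication by $i$ on $L$ and by $-i$ on $\bar L$ defines an endomorphism of $\bbC H$ with $J^2=-\id$. Since conjugation interchanges $L$ and $\bar L$ together with the eigenvalues $\pm i$, the operator $J$ commutes with conjugation and hence restricts to a genuine complex structure $J:H\to H$; this gives the CR structure $(M,H,J)$. The splitting $TM=H\oplus\vv$ then follows from a dimension count once I observe that $H\cap\vv=0$: any vector in the intersection is annihilated by $\lambda$, $\mu$ and $\bar\mu$ simultaneously, hence is zero by \emph{(ii)}. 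As before, passing to $(f\lambda,h\mu)$ does not change $L$, so $J$ is well defined on the class.

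Finally I would treat the metrics. Expanding an arbitrary real symmetric $2$-tensor in the coframe gives $g=a\lambda^2+b\,\mu\bar\mu+(c\,\lambda\mu+\bar c\,\lambda\bar\mu)+(d\,\mu^2+\bar d\,\bar\mu^2)$ with $a,b$ real and $c,d$ complex. Evaluating on the dual frame, the orthogonality condition $g(\vv,H)=0$ reads $c=0$ (it kills the $\lambda\mu$ cross terms), while the Hermitian condition $g(JX,JY)=g(X,Y)$ on $H$ forces $g(e_+,e_+)=-g(e_+,e_+)$, i.e. $d=0$ (it kills the $\mu^2$ terms); positive definiteness then forces $a>0$ and $b>0$. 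Writing $a=f^2$ and $b=2|h|^2$ yields exactly $g=f^2\lambda^2+2|h|^2\mu\bar\mu$, and conversely this tensor is manifestly real and symmetric and, since $f,h$ are nowhere zero, positive definite, orthogonally splitting $\vv$ from $H$ and Hermitian on $H$. The only genuinely substantive step is this characterisation of the adapted metrics, the remainder being bookkeeping in the coframe; the mild subtleties to watch are the conjugation-invariance showing that $\bbC\vv$ descends from a real line, and the invariance of $J$ under the rescalings of condition \emph{(iii)}.
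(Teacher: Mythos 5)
Your proof is correct and complete; note that the paper itself omits the proof of this proposition as ``easy,'' so there is no argument of the authors to compare against, and what you give is exactly the standard coframe/dual-frame verification one would supply. Your sign convention for $J$ (taking $\ker\lambda\cap\ker\bar{\mu}$ as the $+i$ eigenspace) agrees with the paper's later identification of $H^{1,0}$ with the annihilator of $Span_\bbC(\lambda)\oplus Span_\bbC(\bar{\mu})$ in Section \ref{vtns}, and your computation that adaptedness kills the $\lambda\mu$ and $\mu^2$ terms while positivity forces the remaining coefficients to be $f^2>0$ and $2|h|^2>0$ is the whole content of the metric claim.
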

\section{Elements of Cartan's equivalence method}

Here we outline the procedure we will follow in applying Cartan's
method to our particular situation.

\subsection{Cartan invariants}
Consider two structures $(M,[\lambda,\mu])$ and
$(M',[\lambda',\mu'])$. Our aim is to determine whether they are
equivalent or not, according to Definition \ref{d3}, equation (\ref{efi}).
This question is not easy to answer, since it is equivalent to the
problem of the existence of a solution $\phi$ for a system (\ref{efi})
of linear
first order PDEs in which the right hand side is undetermined.
Elie Cartan associates with the forms
$(\lambda,\mu,\bar{\mu})$ and $(\lambda',\mu',\bar{\mu}')$,
representing the structures, two
systems of {\it ordered coframes} $\{\Om_i\}$ and $\{\Om'_i\}$
on manifolds $P$ and $P'$ of the same
dimension, say $n\geq 3$, which are fiber bundles over $M$. Then he shows that equations like
(\ref{efi}) for
$\phi$ have a solution {\it if and only if} a simpler system
\be
\Phi^*\Om'_i=\Om_i,\quad\quad i=1,2,...,n\label{eq:roo1}
\ee
 of differential equations for a
diffeomorphism $\Phi:P\to P'$ has a solution. Note that derivatives of
$\Phi$ still occur in (\ref{eq:roo1}), since $\Phi^*$ is the pullback
of forms from $P'$ to $P$.

One famous example is
his original solution to the equivalence problem for 3-dimensional
strictly pseudoconvex CR structures. There $P$ and $P'$ are
8-dimensional, and his procedure produces two systems of eight
linearly independent 1-forms $\{\Omega_i\}$ and  $\{\Omega_i'\}$.

In our situation, provided $n<\infty$, and if we are able to find $n$
well defined linearly independent 1-forms $\{\Omega_i\}$ on $P$, then
$(P,\{\Omega_i\})$ provides the {\it full system of local invariants} for
the original structure $(M,[\lambda,\mu])$. In particular, using
$(P,\{\Omega_i\})$ one introduces the {\it scalar invariants}, which are the
coefficients $\{K_I\}$ in the decomposition of $\{\der\Omega_i\}$ with
respect to the {\it invariant} basis of 2-forms
$\{\Omega_i\dz\Omega_j\}$.

Now in order to determine if two structures
$(M,[\lambda,\mu])$ and $(M',[\lambda',\mu'])$ are equivalent, it is
enough to have $n$ functionally independent $\{K_I\}$. Then the
condition (\ref{eq:roo1}) becomes
\be
\Phi^* {K'}_I=K_I,\quad\quad I=1,2,...,n.\label{k}
\ee
The advantage of this condition, as compared to (\ref{eq:roo1}), is
that (\ref{k}), being the pull back of functions, does not involve
derivatives of $\Phi$. In this case the existence of $\Phi$ becomes a
question involving the implicit function theorem, and the whole
problem reduces to checking whether a certain Jacobian is non-degenerate.

We remark that an immediate application of
the invariants obtained by Cartan's
equivalence method is to use them to find {\it all} the
homogeneous examples of the particular structure under consideration. 
The procedure of
enumerating these examples is straightforward and algorithmic once
the Cartan invariants have been determined. In our
situation the homogeneous examples will often have local symmetry groups of
dimension {\it three}. The 3-dimensional Lie groups are classified
according to the Bianchi classification of 3-dimensional Lie
algebras \cite{bianchi}. Since we will use this classification in subsequent
sections, 
we recall it below.
\subsection{Bianchi classification of 3-dimensional Lie algebras}
In this section $X_1,X_2,X_3$ denote a basis of a 3-dimensional Lie
algebra $\mathfrak g$ with Lie bracket $[\cdot,\cdot]$. All the
nonequivalent Lie algebras fall into {\it Bianchi types} $I$, $II$,
$VI_0$, $VII_0$, $VIII$, $IX$, $V$, $IV$, $VI_h$, $VII_h$. Apart from
types $VI_h$ and $VII_h$, there is always precisely one Lie algebra
corresponding to a given type. For each value of the real parameter
$h<0$ there is also precisely one Lie algebra of type $VI_h$. Likewise for each value of the parameter $h>0$ there is precisely one Lie algebra of type $VII_h$. 
The commutation relations for each Bianchi type are given in
the following table.\\

\begin{tabular}{|c|c|c|c|c|c|c|}
\hline 
Bianchi type:&$I$&$II$&$VI_0$&$VII_0$&$VIII$&$IX$\\
\hline
$[X_1,X_2]~=$&$0$&$0$&$0$&$0$&$-X_3$&$X_3$\\
$[X_3,X_1]~=$&$0$&$0$&$-X_2$&$X_2$&$X_2$&$X_2$\\
$[X_2,X_3]~=$&$0$&$X_1$&$X_1$&$X_1$&$X_1$&$X_1$\\
\hline
\hline
Bianchi type:&$V$&$IV$&$VI_h$&$VII_h$&&\\
\hline
$[X_1,X_2]~=$&$0$&$0$&$0$&$0$&&\\
$[X_3,X_1]~=$&$X_1$&$X_1$&$-X_2+h X_1$&$X_2+hX_1$&&\\
$[X_2,X_3]~=$&$-X_2$&$X_1-X_2$&$X_1-hX_2$&$X_1-hX_2$&&\\
\hline
\end{tabular}\\

Note that Bianchi type I corresponds to the abelian Lie group, type II
corresponds to the Heisenberg group; types VIII and IX correspond to
the simple groups: $\sog(1,2)$, $\slg(2,\bbR)$ for type
VIII, and $\sog(3)$, $\sug(2)$ for type IX.  
\section{Basic relative invariants of an oriented congruence}\label{sec5}
We make preparations to apply the Cartan method of equivalence for
finding all local invariants of the structure of
an oriented congruence $(M,[\lambda,\mu])$ on a 3-manifold $M$.

Given a structure $(M,[\lambda,\mu])$ we take representatives
$\lambda$ and $\mu$ of 1-forms from the class $[\lambda,\mu]$. Since
$(\lambda,\mu,\bar{\mu})$ is a basis of 1-forms on $M$ we can express
the differentials $\der\lambda$ and $\der\mu$ in terms of the
corresponding basis of 2-forms
$(\mu\dz\bar{\mu},\mu\dz\lambda,\bar{\mu}\dz\lambda)$. We have
\beq
\der\lambda&=&i
a\mu\dz\bar{\mu}+b\mu\dz\lambda+\bar{b}\bar{\mu}\dz\lambda\nonumber\\
\der\mu&=&p\mu\dz\bar{\mu}+q\mu\dz\lambda+s\bar{\mu}\dz\lambda\label{str}\\
\der\bar{\mu}&=&-\bar{p}\mu\dz\bar{\mu}+\bar{s}\mu\dz\lambda+\bar{q}\bar{\mu}\dz\lambda,\nonumber
\eeq
where $a$ is a real valued function and  $b,p,q,s$ are complex valued
functions on $M$. Given any function $u$ on $M$ we define
first order linear partial differential operators acting on $u$ by
$$\der u=u_\la\lambda+u_\mu\mu+u_{\bar{\mu}}\bar{\mu}.$$
Note that $u_\la$ is a real vector field acting on $u$, $u_\mu$ is a
complex vector field of type (1,0) acting on $u$ and $u_{\bar{\mu}}$ is a
complex vector field of type (0,1) acting on $u$. The commutators of
these operators, when acting on $u$ are
\beq
u_{\bar{\mu}\mu}-u_{\mu\bar{\mu}}&=&-ia u_\la-p u_\mu+\bar{p}u_{\bar{\mu}}\nonumber\\
u_{\la\mu}-u_{\mu\la}&=&-bu_\la-q
u_\mu-\bar{s}u_{\bar{\mu}}\label{com}\\
u_{\la\bar{\mu}}-u_{\bar{\mu}\la}&=&-\bar{b}u_\la-s u_\mu-\bar{q}u_{\bar{\mu}}.\nonumber
\eeq
A function $u$ on a CR manifold $(M,[\la,\mu])$ is called a {\it CR
  function} if
\be\der u\dz\la\dz\mu\equiv 0.\label{tcr1}\ee
In terms of the
differential operators above this is the same as
\be u_{\bar{\mu}}\equiv
0.\label{tcr2}\ee
Thus $u_{\bar{\mu}}$ is just the {\it tangential Cauchy-Riemann
  operator} acting on $u$. The equation (\ref{tcr1}) or (\ref{tcr2})
is called the {\it tangential Cauchy-Riemann equation}.

It is easy to see that each of the following two conditions
\be
\der\lambda\dz\lambda=0,\quad\quad \der\mu\dz\mu=0,\label{twistshear}
\ee
is independent of the choice of the respresentatives $(\lambda,\mu)$
from the class $[\lambda,\mu]$. Thus the identical vanishing or not of
either the
coefficient $a$, or the coefficent $s$, is an invariant property of the structure
$(M,[\lambda,\mu])$. Using Cartan's terminology the functions $a$ and
$s$ are the {\it basic relative invariants} of $(M,[\lambda,\mu])$. By
definition they
correspond to the identical vanishing or not of the {\it twist} (the function $a$) and of the
{\it shear} (the function $s$) of the oriented congruence represented by
$(M,[\lambda,\mu])$.

They are invariant versions of the classical $\bv$-dependent notions of twist
$\alpha$ and shear $\sigma$ we considered in Section \ref{cts}. Given
an oriented congruence with vanishing twist $a$ in $M=\bbR^3$ we
can always find a vector field $\bv$ tangent to the congruence such
that the twist $\alpha$ for this vector field is zero. We also have an
analogous statement for $s$ and $\sigma$. Conversely, every vector
field $\bv$ in $\bbR^3$ which
has vanishing twist $\alpha$ (or shear $\sigma$) defines an oriented
congruence with vanishing twist $a$ (or shear $s$).

We note that the twist $a$ is just the {\it Levi form} of the CR
structure and that the shear $s$ is now complex; its meaning will be
explained further in Section \ref{vtns}.

In what follows we will often use the following
(see e.g. \cite{lnt})
\begin{lemma}\label{le}
Let $\mu$ be a smooth complex valued 1-form defined locally in
$\bbR^3$ such that $\mu\dz\bar{\mu}\neq 0$. Then
$$\der \mu\dz\mu \equiv 0\quad{\rm
if~ and~ only~ if}\quad \mu=h\der\zeta$$
where $\zeta$ is a smooth complex function such that
$\der\zeta\dz\der\bar{\zeta}\neq 0$, and $h$ is a smooth nonvanishing complex
function.
\end{lemma}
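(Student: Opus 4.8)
The plan is to prove the two implications separately, the converse being a one-line computation and the forward direction carrying all the content. If $\mu=h\,\der\zeta$ with $h$ nonvanishing and $\der\zeta\dz\der\bar\zeta\neq0$, then $\der\mu=\der h\dz\der\zeta$, so $\der\mu\dz\mu=h\,\der h\dz\der\zeta\dz\der\zeta=0$ because $\der\zeta\dz\der\zeta=0$; moreover $\mu\dz\bar\mu=|h|^2\,\der\zeta\dz\der\bar\zeta\neq0$, consistent with the standing hypothesis. Thus the work is entirely in showing $\der\mu\dz\mu\equiv0\Rightarrow\mu=h\,\der\zeta$. I would first note that, since $\mu\dz\bar\mu\neq0$, the triple $(\lambda,\mu,\bar\mu)$ is a coframe for a suitable real $\lambda$, and that $\der\mu\dz\mu=0$ is exactly the Frobenius integrability condition for the complex Pfaffian equation $\mu=0$, i.e. $\der\mu=\theta\dz\mu$ for some complex $1$-form $\theta$.

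The object I would exploit is the real characteristic line field $E=\{X\in TM:\mu(X)=0\}$. Because $X$ is real, $\mu(X)=0$ is the same as $\bar\mu(X)=0$, so $E=\ker\mu\cap\ker\bar\mu$ has real rank one. Being rank one, $E$ is automatically integrable, so the flow-box theorem supplies local coordinates $(x,y,t)$ with $E=\Span(\partial_t)$; then $\mu(\partial_t)=0$ forces $\mu=\mu_x\,\der x+\mu_y\,\der y$, with complex coefficients depending on all of $(x,y,t)$, and $\mu\dz\bar\mu\neq0$ allows me to assume locally (after possibly interchanging $x$ and $y$) that $\mu_y\neq0$. The key computation is then to expand $\der\mu\dz\mu$ in these coordinates; a direct calculation gives
\[
\der\mu\dz\mu=\big(\mu_y\,\partial_t\mu_x-\mu_x\,\partial_t\mu_y\big)\,\der t\dz\der x\dz\der y,
\]
so that $\der\mu\dz\mu=0$ is equivalent to $\partial_t(\mu_x/\mu_y)=0$. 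Hence the ratio $\kappa:=\mu_x/\mu_y$ is a function of $(x,y)$ alone, and $\mu=\mu_y\,\nu$ where $\nu:=\kappa(x,y)\,\der x+\der y$ is a complex $1$-form carrying no $t$-dependence, living effectively on the $(x,y)$-surface.

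On that real two-dimensional domain $\nu$ satisfies $\nu\dz\bar\nu\neq0$, and I would invoke the classical existence of a local holomorphic coordinate for the complex structure it determines (equivalently, isothermal coordinates; Korn--Lichtenstein, or the two-dimensional case of the Newlander--Nirenberg theorem) to write $\nu=g\,\der\zeta$ with $g$ nonvanishing and $\zeta=\zeta(x,y)$ a local diffeomorphism of the surface. Combining, $\mu=(\mu_y g)\,\der\zeta=:h\,\der\zeta$ with $h$ nonvanishing, and since $\zeta$ depends only on $(x,y)$ and is a surface coordinate, $\der\zeta$ and $\der\bar\zeta$ are independent, whence $\der\zeta\dz\der\bar\zeta\neq0$. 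The routine parts are the linear algebra identifying $E$ and the coordinate expansion of $\der\mu\dz\mu$; the genuine analytic obstacle is the two-dimensional integrating-factor step $\nu=g\,\der\zeta$, which amounts to solving a Beltrami-type equation in the smooth category and is the only place where anything beyond algebra and the trivial rank-one Frobenius theorem is used. Everything else serves to reduce the three-dimensional statement to this classical surface result.
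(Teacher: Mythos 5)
Your proof is correct and follows essentially the same route as the paper's: reduce to coordinates in which $\mu$ has no component along the characteristic direction (you do this via the flow-box theorem for the rank-one kernel field, the paper via the real Frobenius theorem applied to $\mathrm{Re}\,\mu$ and $\mathrm{Im}\,\mu$ --- dual phrasings of the same step), observe that $\der\mu\dz\mu=0$ forces the ratio of the two coefficients to be independent of the third coordinate, and then invoke the classical isothermal-coordinates theorem on the resulting surface to produce the integrating factor. The computations and the single genuinely analytic ingredient are identical to those in the paper.
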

\begin{proof}
Consider an open set
$U\in\bbR^3$ in which we have $\mu$ such that
$\der\mu\dz\mu=0$ and $\mu\dz\bar{\mu}\neq 0$. We define {\it real} 1-forms
$\theta^1={\rm Re}(\mu)$ and $\theta^2={\rm
  Im}(\mu)$. They satisfy $\theta^1\dz\theta^2\neq 0$ in $U$. Since
$U\subset\bbR^3$ we trivially have
$\der\theta^1\dz\theta^1\dz\theta^2\equiv 0$ and
$\der\theta^2\dz\theta^1\dz\theta^2\equiv 0$. Now
the real Fr\"obenius theorem implies that there exists a coordinate
chart $(x,y,u)$ in $U$ such that $\theta^1=t_{11}\der x+t_{12}\der y$
and $\theta^2=t_{21}\der x+t_{22}\der y$, with some {\it real}
functions $t_{ij}$ in $U$ such that $t_{11}t_{22}-t_{12}t_{21}\neq
0$. Thus in the coordinates $(x,y,u)$ the form
$\mu=\theta^1+i\theta^2$
can be written as $\mu=c_1\der x+ c_2 \der
y$, where now $c_1$, $c_2$ are {\it complex} functions such that
$c_1\bar{c}_2-\bar{c}_1c_2\neq 0$ on $U$, so neither $c_1$ nor $c_2$
can be zero. The $\der\mu\dz\mu\equiv 0$
condition for $\mu$ written in this representation is simply
$c_2^2\der(\tfrac{c_1}{c_2})\dz\der x\dz\der y\equiv 0$. Thus the
partial derivative $(\frac{c_1}{c_2})_u\equiv 0$, which means that
the ratio $\frac{c_1}{c_2}$ does not depend on $u$. This ratio defines
a nonvanishing {\it complex} function $F(x,y)=\frac{c_1}{c_2}$ of only
{\it two} real variables $x$ and $y$. Returning to
$\mu$ we see that it is of the form $\mu=c_2\big(\der y+F(x,y)\der
x\big)$. Consider the real bilinear symmetric
form $G=2\mu\bar{\mu}=|c_2|^2\Big(\der y^2+2\big(F(x,y)+\bar{F}(x,y)\big)\der
x\der y+|F(x,y)|^2\der
x^2\Big)$.
%\beq
%G&=&2\mu\bar{\mu}\nonumber\\
%&=&|c_2|^2\Big(\der y^2+2\big(F(x,y)+\bar{F}(x,y)\big)\der
%x\der y+F(x,y)\bar{F}(x,y)\der
%x^2\Big).\nonumber
%\eeq
Invoking the
classical theorem on the existence of isothermal coordinates we are
able to find an open set $U'\subset
U$ with new coordinates $(\xi,\eta,u)$ in which $G=h^2(\der
\xi^2+\der \eta^2)$, where $h=h(\xi,\eta,u)$ is a real function in
$U'$. This means that in these coordinates $\mu=h\der(
\xi+i\eta)=h\der\zeta$. The proof in the other direction is obvious.
\end{proof}
\section{Vanishing twist and shear} \label{vts} Let us assume that the
structure $(M,[\lambda,\mu])$ satisfies both conditions
(\ref{twistshear}); i.e., that $a\equiv 0$ and $s\equiv 0$. It is easy
to see that all such structures have no local invariants, meaning that
all of them are locally equivalent. Indeed, if
$\der\lambda\dz\lambda\equiv 0$ then the real Fr\"obenius theorem
guarantees that locally $\lambda=f\der u$. Similarly, if
$\der \mu\dz\mu\equiv 0$, then the Lemma \ref{le} assures that
$\mu=h\der\zeta$. Since $\der\zeta\dz\lambda\dz\mu\equiv 0$, we see
that the function $\zeta$ is a {\it holomorphic} coordinate. Recalling the fact that
$\lambda\dz\mu\dz\bar{\mu}\neq 0$, we conclude that
if $a\equiv 0$ and $s\equiv 0$ then the CR manifold $M$ with the
prefered splitting is locally equivalent to $\bbR\times \bbC$,
with local coordinates $(u,\zeta)$, such that $u$ is
real. 
In these coordinates the structure may be represented by $\lambda=\der u$ and
$\mu=\der\zeta$. The local group of automorphisms for such structures
is infinite dimensional and given in terms of two functions $U=U(u)$
and $Z=Z(\zeta)$ such that $U$ is real, $U_u\neq 0$, $Z$ is
holomorphic and $Z_\zeta\neq 0$. The automorphism transformations are
then $\tilde{u}=U(u)$, $\tilde{\zeta}=Z(\zeta)$. Note that from the
point of view of Cartan's method this is the {\it involutive} case in which
$n=\infty$. There are no local invariants in this situation.

\section{Nonvanishing twist and vanishing shear} \label{ntvs}
\subsection{The relative invariants $K_1$ and $K_2$}\label{ncrn}
Next let us
assume that the structure $(M,[\lambda,\mu])$ has some twist,
$a\neq 0$, but has identically vanishing shear, $s\equiv 0$. Let us interpret this in terms of the corresponding CR
structure with the prefered splitting. The nonvanishing twist condition $\der
\lambda\dz\lambda\neq 0$ is the condition that the CR
structure has nonvanishing Levi form. This means that the CR manifold
is {\it strictly pseudoconvex} and hence is not locally equivalent to
$\bbR\times\bbC$. The no shear condition,
$\der\mu\dz\mu\equiv 0$, by the Lemma \ref{le}, means that the class
$[\mu]$ may be represented by a 1-form $\mu=\der\zeta$ with a complex
function $\zeta$ on $M$ satisfying $\der\zeta\dz\der\bar{\zeta}\neq
0$. Note that this function trivially satisfies the {\it tangential
  Cauchy-Riemann equation} $\der\zeta\dz\lambda\dz\mu=0$ for this CR
structure, and hence is a {\it CR function}. If $Z$ is {\it any} holomorphic function with nonvanishing derivative,
then $Z=Z(\zeta)$ is again a CR function with $\der
Z\dz\der\bar{Z}\neq 0$. This gives us a distinguished class of
genuinely complex CR functions $Z=Z(\zeta)$, which we denote by
$[\zeta]$. Conversely if we have a
{\it strictly pseudoconvex} 3-dimensional CR structure $(M,H,J)$ with
a distinguished class $[\zeta]$ of CR functions $Z=Z(\zeta)$, such that
$\der\zeta\dz\der\bar{\zeta}\neq 0$ and $Z'\neq 0$, then this CR
structure defines a representative $(\lambda,\mu=\der Z)$, with
$\lambda$ being a nonvanishing section of the characteristic bundle
$H^0$. This in turn defines a structure $(M,[\lambda,\mu])$ of an
oriented congruence which has $a\neq 0$ and $s\equiv 0$.

Summarizing we have
\begin{proposition}
All local structures of an oriented congruence $(M,[\lambda,\mu])$ with
nonvanishing twist, $a\neq 0$, and vanishing shear, $s\equiv 0$, are in a
one to one correspondence with local CR structures $(M,H,J)$ having
nonvanishing Levi form and possessing a distinguished class $[\zeta]$
of genuinely complex CR functions on $M$.
\end{proposition}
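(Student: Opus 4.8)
The plan is to prove the bijection by exhibiting explicit maps in both directions and checking that they are mutually inverse and that each descends to the relevant equivalence classes. Most of the construction has already been assembled in the discussion preceding the statement; what remains is to organise it and to verify the two well-definedness claims that make the correspondence a genuine bijection.

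\emph{From congruence structures to CR structures.} Starting from $(M,[\la,\mu])$ with $a\neq 0$ and $s\equiv 0$, I would first invoke Proposition \ref{adda} to read off the underlying CR structure $(M,H,J)$, with $H=\ker\la$ and $J$ fixed by declaring $\mu$ to be of type $(1,0)$. The hypothesis $a\neq 0$ is exactly nonvanishing of the Levi form, so the CR structure is strictly pseudoconvex. The hypothesis $s\equiv 0$ reads $\der\mu\dz\mu\equiv 0$, so Lemma \ref{le} produces a complex function $\zeta$ with $\der\zeta\dz\der\bar\zeta\neq 0$ and a nonvanishing $h$ with $\mu=h\der\zeta$; since $\der\zeta\dz\la\dz\mu=0$ holds automatically, $\zeta$ is a CR function. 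Setting $[\zeta]=\{\,Z\circ\zeta : Z \text{ holomorphic},\ Z'\neq 0\,\}$ then gives the distinguished class required.

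\emph{Well-definedness of $[\zeta]$ (the main obstacle).} The delicate point is that $[\zeta]$ must depend only on the congruence structure, not on the representative $\mu$ nor on the particular $\zeta$ returned by Lemma \ref{le}. Replacing $\mu$ by an equivalent $g\mu$ does not change the class $[\mu]$, so suppose Lemma \ref{le} is applied twice, yielding $\mu = h\der\zeta = \tilde h\,\der\tilde\zeta$ with $h,\tilde h$ nonvanishing. Then $\der\tilde\zeta=(h/\tilde h)\,\der\zeta$; writing $\phi=h/\tilde h$ and using $\der\der\tilde\zeta=0$ gives $\der\phi\dz\der\zeta\equiv 0$. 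In the coordinates $(\zeta,\bar\zeta,u)$ supplied by $\der\zeta\dz\der\bar\zeta\neq 0$ this forces $\phi_{\bar\zeta}=\phi_u=0$, i.e. $\phi=\phi(\zeta)$ is holomorphic in $\zeta$; hence $\tilde\zeta=Z(\zeta)$ with $Z'=\phi\neq 0$, so $\tilde\zeta\in[\zeta]$. This is the heart of the argument and the only place a computation is genuinely needed.

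\emph{The inverse construction and conclusion.} Conversely, given a strictly pseudoconvex $(M,H,J)$ together with a distinguished class $[\zeta]$, I would choose any nonvanishing section $\la$ of the characteristic bundle $H^0$ and any representative $Z\in[\zeta]$, and set $\mu=\der Z$. Then $\der\mu=0$ gives $s\equiv 0$, while nonvanishing Levi form gives $a\neq 0$, so the resulting $(M,[\la,\mu])$ is a congruence structure of the desired type. Here the freedom $\la\mapsto f\la$ in choosing the section of $H^0$, and the freedom in replacing $Z$ by another element of $[\zeta]$ (under which $\der Z$ rescales by the nonvanishing factor $Z'(\zeta)$), both only alter the representative within the class $[\la,\mu]$ of Definition \ref{d3}; thus the construction is independent of all choices. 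Finally I would verify that the two constructions are mutually inverse: going congruence $\to$ CR $\to$ congruence returns the same $[\la,\mu]$ because $\der Z$ and $h\der\zeta$ are scalar multiples of one another, while going CR $\to$ congruence $\to$ CR returns the same $(H,J)$ and, by the well-definedness just proved, the same $[\zeta]$. This establishes the claimed one-to-one correspondence.
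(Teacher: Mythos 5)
Your proposal is correct and follows essentially the same route as the paper: the forward direction via Lemma \ref{le} producing $\mu=h\,\der\zeta$ with $\zeta$ a CR function, and the inverse via $(\lambda,\mu=\der Z)$ with $\lambda$ a section of $H^0$. The only addition is your explicit check that $[\zeta]$ is independent of the choices made (showing $\tilde\zeta=Z(\zeta)$ with $Z$ holomorphic whenever $h\,\der\zeta=\tilde h\,\der\tilde\zeta$), a point the paper leaves implicit; that verification is correct and welcome.
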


Note that the proposition remains true if we drop the nonvanishing
twist condition on the left and drop the nonvanishing Levi form
condition on the right.

We now pass to the determination of the local invariants of
$(M,[\lambda,\mu])$ with nonvanishing twist and vanishing shear. We
take a representative $(\lambda,\mu)$. Because of our
assumptions the formulae (\ref{str}) become
\beq
\der\lambda&=&i
a\mu\dz\bar{\mu}+b\mu\dz\lambda+\bar{b}\bar{\mu}\dz\lambda\nonumber\\
\der\mu&=&p\mu\dz\bar{\mu}+q\mu\dz\lambda\label{str1}\\
\der\bar{\mu}&=&-\bar{p}\mu\dz\bar{\mu}+\bar{q}\bar{\mu}\dz\lambda.\nonumber
\eeq
For example if we were to choose $\mu$ as $\mu=\der\zeta$, where $\zeta$
is a particular representative of the distinguished class $[\zeta]$ of
CR functions, then $\der\mu$ would identically vanish, so
$p\equiv 0$ and $q\equiv 0$. Although this choice of $\mu$ is very
convenient and quite simplifies the determination of the invariants,
we will work in the most general representation (\ref{str1}) of
$[\lambda,\mu]$ to get the formulae for the invariants in their full
generality.

Given a choice $(\lambda,\mu)$ as in (\ref{str1}) we take the most
general representatives
\be
\omega=f\lambda,\quad\quad\omega_1=h\mu,\quad\quad
\bar{\omega}_1=\bar{h}\bar{\mu},\label{theta}
\ee
of the class $[\lambda,\mu]$. Here $f\neq 0$ (real) and $h\neq 0$
(complex) are arbitrary functions. Then we reexpress the
differentials $\der\omega$, $\der\omega_1$ and $\der\bar{\omega}_1$ in
terms of the general basis $(\omega,\omega_1,\bar{\omega}_1)$. We
have:
\beq
\der\om&=&i\frac{fa}{|h|^2}~\om_1\dz\bar{\om}_1~+~[~\der\log f+\frac{b}{h}\om_1+\frac{\bar{b}}{\bar{h}}\bar{\om}_1~]\dz\om\label{eq1}\\
\der\om_1&=&[~\der\log
  h-\frac{p}{\bar{h}}\bar{\om}_1-\frac{q}{f}\om~]\dz\om_1\\
\der\bar{\om}_1&=&[~\der\log
  \bar{h}-\frac{\bar{p}}{h}\om_1-\frac{\bar{q}}{f}\om~]\dz\bar{\om}_1
\eeq
Since $a\neq 0$ we can easily achieve
\be
\der\om\dz\om=i\om_1\dz\bar{\om}_1\dz\om\label{n0}
\ee
by taking
\be
f=\frac{|h|^2}{a}.\label{f}
\ee
Thus condition (\ref{n0}) `fixes the gauge' in the
choice of $f$.

Introducing the real functions $\rho>0$ and $\phi$ via $h=\rho{\rm
  e}^{i\phi}$ and maintaining the condition (\ref{n0}) we may
rewrite equation (\ref{eq1}) in the form
$$\der\om=i\om_1\dz\bar{\om}_1+(\Omega+\bar{\Omega})\dz\om,$$
where the real valued 1-form $\Omega+\bar{\Omega}$ is \be
\Omega+\bar{\Omega}= 2\der\log\rho+(b-(\log
a)_\mu)\mu+(\bar{b}-(\log
a)_{\bar{\mu}})\bar{\mu}+t\lambda.\label{omp}\ee The real function
$t$ appearing in $\Omega+\bar{\Omega}$ can be
  determined {\it algebraically} from the condition that
\be
(\der\om_1+\der\bar{\om}_1)\dz(\om_1-\bar{\om}_1)=-\om_1\dz\bar{\om}_1\dz(\Omega+\bar{\Omega}).\label{n1}
\ee
If this condition is imposed then
\be
t=-q-\bar{q}.\label{t}
\ee
Now, if $t$ is as in (\ref{t}) and $f$ is as in (\ref{f}) we define
$\Omega-\bar{\Omega}$ to be an imaginary 1-form such that
\be
(\der\om_1+\der\bar{\om}_1)\dz(\om_1+\bar{\om}_1)=
\om_1\dz\bar{\om}_1\dz(\Omega-\bar{\Omega}).\label{n2}
\ee
This determines $\Omega-\bar{\Omega}$ to be
$$\Omega-\bar{\Omega}=2i\der\phi+(\bar{q}-q)\la+z\mu-\bar{z}\bar{\mu},$$
where $z$ is a still undetermined function. The condition that fixes $z$
in an algebraic fashion is the requirement that
\be
\der\om_1=\Omega\dz\om_1,\quad\quad\quad
\der\bar{\om}_1=\bar{\Omega}\dz\bar{\om}_1.\label{n3}
\ee
If this is imposed we have
\be
z=2\bar{p}+b-(\log a)_\mu,\quad\quad\quad \bar{z}=2p+\bar{b}-(\log a)_{\bar{\mu}}.
\ee
Thus given a structure $(M,[\lambda,\mu])$ with nonvanishing twist and
vanishing shear, the four normalization conditions (\ref{n0}),
(\ref{n1}), (\ref{n2}), (\ref{n3}) uniquely specify a 5-dimensional
manifold $P$, which is locally $M\times\bbC$, and a well
defined coframe $(\omega,\omega_1,\bar{\omega}_1,\Omega,\bar{\Omega})$
on it such that
\beq
\om&=&\frac{\rho^2}{a}\lambda\nonumber\\
\om_1&=&\rho {\rm e}^{i\phi}\mu\nonumber\\
\bar{\om}_1&=&\rho {\rm e}^{-i\phi}\bar{\mu}\label{n77}\\
\Omega&=&\der\log\rho+i\der\phi+(\bar{p}+b-(\log
a)_\mu)\mu-p\bar{\mu}-q\lambda\nonumber\\
\bar{\Omega}&=&\der\log\rho-i\der\phi-\bar{p}\mu+(p+\bar{b}-(\log
a)_{\bar{\mu}})\bar{\mu}-\bar{q}\lambda.\nonumber
\eeq
Here the complex coordinate along the factor $\bbC$ in $M\times\bbC$
is $h=\rho{\rm e}^{i\phi}$.
The coframe $(\omega,\omega_1,\bar{\omega}_1,\Omega,\bar{\Omega})$
satisfies
\beq
\der\om&=&i\om_1\dz\bar{\om}_1+(\Omega+\bar{\Omega})\dz\om\nonumber\\
\der\om_1&=&\Om\dz\om_1\nonumber\\
\der\bar{\om}_1&=&\bar{\Om}\dz\bar{\om}_1\label{syste}\\
\der\Omega&=&K_1\om_1\dz\bar{\om}_1+K_2\om_1\dz\om\nonumber\\
\der\bar{\Omega}&=&-K_1\om_1\dz\bar{\om}_1+\overline{K}_2\bar{\om}_1\dz\om,\nonumber
\eeq where
\be
K_1=\frac{1}{\rho^2}k_1,\quad\quad K_2=\frac{{\rm
    e}^{-i\phi}}{\rho^3}k_2,\label{k22}
\ee
are functions on $P$ with $k_1$ and $k_2$ given by \beq
k_1&=&{\rm Re}\Big((\log a)_{\mu\bar{\mu}}-(\log a)_\mu p-iqa-b_{\bar{\mu}}+bp-2\bar{p}_{\bar{\mu}}+2|p|^2\Big)\nonumber\\
k_2&=&a_{\mu\la}-ab_\la+i(\log
a)_\mu(b_{\bar{\mu}}-\bar{b}_\mu-bp+\bar{b}\bar{p})-2a_\mu
q-aq_\mu-(a\bar{q})_\mu-ab\bar{q}.\nonumber \eeq Note that the
functions $k_1$ and $k_2$ are actually defined on $M$. Note also
that $k_1$ is {\it real} as a consequence of the commutation
relations (\ref{com}). The functions $K_1$ and $K_2$ are the {\it
relative invariants} of the structure $(M,[\lambda,\mu])$, and
(\ref{syste}) are the {\it structural equations} for
$(M,[\lambda,\mu])$.
%
%Note that due to the equation $\der^2\Omega\equiv 0$ the function
%$K_I$ is {\it real}. It can be put to the form
%$$
%.K_I&=&\rho^{-2}\Big((\log a)_{1\bar{1}}-b_{\bar{1}}-(\log a)_1
%p+bp-p_1-\bar{p}_{\bar{1}}+2|p|^2-i a q\Big)
%$$
\begin{theorem}
A given structure $(M,[\lambda,\mu])$ of an oriented congruence with
nonvanishing twist, $a\neq 0$, and vanishing shear, $s\equiv 0$,
uniquely defines a 5-dimensional manifold $P$, 1-forms
$\om,\om_1,\bar{\om}_1,\Om,\bar{\Om}$
and functions  $K_1,K_2,\overline{K}_2$ on $P$
such that
\begin{itemize}
\item[-] $\om,\om_1,\bar{\om}_1$ are as in {\rm (\ref{theta})},
\item[-]
  $\om\dz\om_1\dz\bar{\om}_1\dz\Om\dz\bar{\Om}\neq
  0$ at each point of $P$,
\item[-] the forms and functions $K_1$ (real), $K_2$ (complex)
are uniquely determined by the requirement that on $P$
they satisfy equations {\rm (\ref{syste})}.
\end{itemize}
In particular the identical vanishing, or not, of either $k_1$ or $k_2$ are
invariant conditions. Also the sign of $k_1$ is an invariant, if $k_1\neq 0$.
\end{theorem}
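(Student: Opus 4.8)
The plan is to recognize that the statement is exactly the endpoint of the Cartan reduction already carried out in the computation preceding the theorem, so the proof amounts to assembling that construction, confirming that the normalizations fix everything uniquely, verifying closure, and then reading off the invariance assertions. First I would fix a representative $(\lambda,\mu)$ obeying (\ref{str1}) and pass to the most general representatives $\om=f\lambda$, $\om_1=h\mu$ of (\ref{theta}), with $f$ real nonvanishing and $h=\rho\,\mathrm{e}^{i\phi}$ complex nonvanishing. The pair $(f,h)$ parametrizes the structure-group fiber over $M$; once the first normalization is imposed $f$ is eliminated, leaving the single complex fiber coordinate $h$, so the total space is $P=M\times\bbC$ of dimension $3+2=5$. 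This identifies $P$ together with its candidate coframe $(\om,\om_1,\bar{\om}_1,\Omega,\bar{\Omega})$.

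Next I would impose the four normalization conditions in order, checking at each stage that the undetermined quantity is fixed algebraically and uniquely. Condition (\ref{n0}) forces $f=|h|^2/a$ as in (\ref{f}), where the hypothesis $a\neq 0$ is used essentially. Writing $\der\om=i\om_1\dz\bar{\om}_1+(\Omega+\bar{\Omega})\dz\om$ introduces the real one-form $\Omega+\bar{\Omega}$ of (\ref{omp}) with a single undetermined real function $t$, which (\ref{n1}) pins to $t=-q-\bar{q}$ as in (\ref{t}). Condition (\ref{n2}) then determines the imaginary part $\Omega-\bar{\Omega}$ up to one undetermined function $z$, and (\ref{n3}) fixes $z=2\bar{p}+b-(\log a)_\mu$. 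The outcome is precisely the coframe (\ref{n77}); since each step solves a linear algebraic equation with nonvanishing coefficient, the coframe is uniquely determined, which yields the uniqueness clause of the theorem. Linear independence $\om\dz\om_1\dz\bar{\om}_1\dz\Omega\dz\bar{\Omega}\neq 0$ holds because the $\der\log\rho$ and $\der\phi$ parts of $\Omega,\bar{\Omega}$ supply the two fiber directions transverse to the pullbacks from $M$.

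The decisive step is establishing the closure relations (\ref{syste}). The equations for $\der\om,\der\om_1,\der\bar{\om}_1$ are immediate from the normalizations, but the two equations for $\der\Omega$ and $\der\bar{\Omega}$ require differentiating the explicit expression for $\Omega$ in (\ref{n77}) and re-expanding in the invariant basis of 2-forms. I expect this to be the main obstacle: one must show that $\der\Omega$ carries no $\bar{\om}_1\dz\om$ term and that the coefficient of $\om_1\dz\bar{\om}_1$ is real, so that the two structural equations are genuinely conjugate with a single real $K_1$ and a single complex $K_2$. Both facts are consequences of $\der^2=0$, which enters through the commutation relations (\ref{com}); in particular the reality of $k_1$ is forced by (\ref{com}). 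Carrying out this computation (which, as the authors note, was verified by Mathematica) produces (\ref{k22}) together with the displayed formulas for $k_1$ and $k_2$.

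Finally I would deduce the invariance assertions. Because the coframe is built by conditions that are invariant under (\ref{efi}) and involve no arbitrary choices beyond the initial representative, any equivalence lifts to a bundle diffeomorphism $\Phi:P\to P'$ matching the two coframes, whence the coefficients in (\ref{syste}) are genuine scalar invariants, $\Phi^*K_1'=K_1$ and $\Phi^*K_2'=K_2$. Since $K_1=k_1/\rho^2$ with $\rho>0$, the function $K_1$ vanishes identically exactly when $k_1$ does and satisfies $\sgn K_1=\sgn k_1$ where nonzero; invariance of $K_1$ on $P$ therefore transfers to the identical vanishing of $k_1$ and, when $k_1\neq 0$, to its sign. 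Likewise $K_2=\mathrm{e}^{-i\phi}\rho^{-3}k_2$ by (\ref{k22}) vanishes identically iff $k_2$ does, so invariance of $K_2$ gives the invariance of the identical vanishing of $k_2$. This establishes the final two sentences of the theorem.
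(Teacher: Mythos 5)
Your proposal is correct and follows essentially the same route as the paper: the theorem there is proved by the construction immediately preceding it, namely imposing the four normalizations (\ref{n0})--(\ref{n3}) to fix $f$, $t$ and $z$ algebraically, arriving at the coframe (\ref{n77}) on $P=M\times\bbC$ and the structure equations (\ref{syste}), with the reality of $k_1$ coming from the commutation relations (\ref{com}). Your added closing paragraph on how invariance of $K_1$ and $K_2$ on $P$ descends to the invariance of the vanishing and of the sign of $k_1,k_2$ is a faithful spelling-out of what the paper leaves implicit.
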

\subsection{Description in terms of the Cartan connection}\label{cconse}
The above theorem, stated in modern language, means the
following. The manifold $P$ is a Cartan bundle $H_2\to P\to M$, with $H_2$
a 2-dimensional abelian subgroup of a certain 5-dimensional Lie group
$G_5$. The group $G_5$ is a subgroup of ${\bf SU}(2,1)$; i.e.,
the 8-dimensional Lie group which
preserves the $(2,1)$-signature hermitian form
$$
h(Z,Z)=
\bma
Z^1,&Z^2,&Z^3
\ema
\hat{h}
\bma
\bar{Z}^1\\
\bar{Z}^2\\
\bar{Z}^3
\ema, ~~~~~~
\hat{h}=
\bma
0&0&2i\\
0&1&0\\
-2i&0&0
\ema.
$$
The forms
$\om,\om_1,\bar{\om}_1,\Om,\bar{\Om}$
in the theorem can be collected into a matrix of 1-forms
$$
\tilde{\omega}=
\begin{pmatrix}
\frac{1}{3}(2\Om+\bar{\Om})&0&0\\
&&\\
\om_1&\frac{1}{3}(\bar{\Om}-\Om)&0\\
&&\\
2\om&2i\bar{\om}_1&-\frac{1}{3}(2\bar{\Om}+\Om),
\end{pmatrix}
$$
satisfying
$$
\tilde{\om}\hat{h}+\hat{h}\tilde{\om}^\dagger=0.
$$
The Lie algebra ${\frak g}_5$ of the group $G_5$ is then
$${\frak g}_5=\{
\begin{pmatrix}
\frac{1}{3}(2z_2+\bar{z}_2)&0&0\\
&&\\
z_1&\frac{1}{3}(\bar{z}_2-z_2)&0\\
&&\\
2x&2i\bar{z}_1&-\frac{1}{3}(2\bar{z}_2+z_2)
\end{pmatrix},~x\in\bbR,~z_1,z_2\in\bbC\},$$ and as such is a real
5-dimensional Lie algebra parametrized by the parameters
$x,{\rm Re}(z_1),{\rm Im}(z_1),{\rm Re}(z_2),{\rm Im}(z_2)$. It is naturally contained in
  ${\frak su}(2,1)$. The subgroup $H_2$ corresponds to the subalgebra
  ${\frak h}_2\subset{\frak g}_5$ given by $x=0,z_1=0$. Now, $\tilde{\om}$ can be interpreted as
a Cartan connection on $P$ \cite{Kobayashi} having values in the Lie
algebra ${\frak g}_5\subset{\frak su}(2,1)$. It follows from
equations (\ref{syste}) that the curvature $R$ of
this connection is
$$
R=\der \tilde{\om}+\tilde{\om}\dz\tilde{\om}=
\begin{pmatrix}
R_1&0&0\\
0&R_2&0\\
0&0&-R_1-R_2
\end{pmatrix},
$$
where
\beq
R_1&=&-\tfrac{2}{3}K_2\om\dz\om_1-\tfrac{1}{3}\overline{K}_2\om\dz\bar{\om}_1+\tfrac{1}{3}K_1\om_1\dz\bar{\om}_1\nonumber\\
R_2&=&\tfrac{1}{3}K_2\om\dz\om_1-\tfrac{1}{3}\overline{K}_2\om\dz\bar{\om}_1-\tfrac{2}{3}K_1\om_1\dz\bar{\om}_1\nonumber
\eeq
It yields all the invariant information about the corresponding
structure $(M,[\lambda,\mu])$, very much in the same way as the Riemann
curvature yields all the information about a Riemannian structure.\\

\subsection{Conformal Lorentzian metrics}\label{cconse1}
Using the matrix elements $\tilde{\om}^i_{~j}$ of the Cartan connection $\tilde{\om}$
it is convenient to consider the bilinear form
$$
G=-i\tilde{\om}^3_{~j}\tilde{\om}^j_{~1}.
$$
This form, when written explicitly in terms of
$\om,\om_1,\bar{\om}_1,\Om,\bar{\Om}$,
is given by
$$
G=2\om_1\bar{\om}_1+\frac{2}{3i}\om(\Om-\bar{\Om}).
$$
Introducing the basis of vector fields
$X,X_1,\bar{X}_1,Y,\bar{Y}$, the respective duals of $
\om,\om_1,\bar{\om}_1$, $\Om,\bar{\Om}$,
one sees that $G$ is a form of signature $(+++-0)$ with the
degenerate direction tangent to the vector field $Y+\bar{Y}=\rho\partial_\rho$.
We may think of the Cartan bundle $P$ as being foliated by 1-dimensional
leaves tangent to this vector field. Now
equations (\ref{syste}) guarantee that the Lie derivative
$$
{\mathcal L}_{(Y+\bar{Y})}~G=2~G,
$$
so that the bilinear form $G$ is preserved up to a scale when Lie
transported along the leaves of the foliation. Therefore the
4-dimensional leaf space $N=P/\hspace{-0.15cm}\sim$ of the
foliation is naturally
equipped with a conformal class of Lorentzian metrics $[g]$, the
class to which the bilinear form $G$ naturally descends. The
Lorentzian metrics
\be
g=2\om_1\bar{\om}_1+\frac{2}{3i}\om(\Om-\bar{\Om})\label{fef}
\ee
on $N$ are the analogs of the Fefferman
metrics \cite{Fef} known in CR manifold theory.

We note that $N$ is a circle bundle above $M$ with the fiber
coordinate $\phi$.

Interestingly metrics (\ref{fef}) belong to a larger conformal family,
which is also well defined on $N$. It turns out that if we start
with a bilinear form
$$
G_t=2\om_1\bar{\om}_1~+~2ti~\om(\bar{\Om}-\Om)
$$
where $t$ is {\it any function} on $P$ constant along the $Y+\bar{Y}$
direction, then it also well projects to a conformal Lorentzian class
$[g_t]$ on $N$ with representatives
\be
g_t=2\om_1\bar{\om}_1~+~2ti~\om(\bar{\Om}-\Om)\label{feog}
\ee
parametrized by $t$.
 To
see this it is enough to look at the explicit expressions for the
forms $(\om_1,\bar{\om}_1,\om,\Om,\bar{\Om})$ in (\ref{n77}) and to
note that $G_t$ is of the form $G_t=\rho^2(...)$, where the dotted
terms do not depend on the coordinate $\rho$ which is aligned with
$Y+\bar{Y}$ on $P$. 

Although $t$ may be an arbitrary function on $N$, in what follows 
we will only be interested in the case when $t$ is a {\it constant} 
parameter.

We return to metrics $g_t$ in Section \ref{bbaa}, where we
discuss their conformal curvature $F_t$ and provide some example of
the Lorentzian metrics satisfying the so called Bach condition.

\subsection{Basic examples}
\noindent
\begin{example}\label{hypc}
Note that the assumption that $K_1$ and $K_2$ are {\it
  constant} on $P$
is compatible with (\ref{syste}) iff $K_1=K_2=0$. In such case the
curvature $R$ of the Cartan connection $\tilde{\om}$
%associated
%with the CR-structure $(N,[(\la,\mu)])$
vanishes, and it follows that there is only one, modulo local
equivalence, $[\lambda,\mu]$ structure with this property. It
coincides with the CR structure of the {\it Heisenberg group}
$$M=\{~(z,w)\in\bbC^2: {\rm Im}(w)=|z|^2~\}$$ with the preferred
splitting $\vv$ generated by the vector field $\bv=\partial_u$,
$u={\rm Re}(w)$. We call this the {\it standard splitting} on the Heisenberg group.
The resulting oriented congruence has
the
maximal possible group of symmetries isomorphic to the group $G_5$.\\
\end{example}

\begin{example}\label{rigid1}
We recall that a 3-dimensional CR manifold $M$ embedded in $\bbC^2$ via
$$M=\{~(z,w=u+iv)\in\bbC^2~:~v=\tfrac12 H(z)~\},$$
where $H$ is a real-valued fuction of  the variable $z\in\bbC$, is called {\it rigid}. It
can be given a structure of an oriented congruence by choosing the splitting to be spanned
by the vector field $\partial_u$. As in the above special case of the Heisenberg group
we call this preferred splitting on $M$ the {\it standard splitting} on a rigid CR structure. Intrinsically
this CR-manifold with the preferred splitting may be described in
terms of the forms $\lambda$ and $\mu$ given by
\be
\lambda=\der u+\tfrac{i}{2}(H_{\bar{z}}\der\bar{z}-H_z\der z),\quad\quad\mu=\der
z.\label{rigid}
\ee
Via (\ref{ec}), these forms define a structure
$(M,[\lambda,\mu])$ of an oriented congruence on $M$. In the following we assume that
$$H_{z\bar{z}}\neq 0$$
at every point of $M$. It means that $M$ is strictly pseudoconvex.
\end{example}

\begin{definition}
A structure $(M,[\lambda,\mu])$ of an oriented congruence with
vanishing shear and nonvanishing twist on a manifold $M$ is called
(locally) \emph{flat} iff (locally) it has vanishing curvature $R$
for its Cartan connection $\tilde{\omega}$. The necessary and
sufficient conditions for that are $K_1\equiv 0$ and $K_2\equiv 0$.
\end{definition}

A short calculation leads to the following proposition.

\begin{proposition}
Let $(M,[\lambda,\mu])$ be a structure of an oriented congruence
associated with the rigid CR-manifold $M$ via the forms $\lambda$ and $\mu$
of (\ref{rigid}). Then for any real-valued function $H=H(z)$ such that
$H_{z\bar{z}}\neq 0$ this structure
has vanishing shear and non-vanishing twist. Its
relative invariant $K_2$ is identically vanishing, $K_2\equiv 0$; the
relative invariant $K_1$ is given by
$K_1=\tfrac{1}{\rho^2}[\log(H_{z\bar{z}})]_{z\bar{z}}$.
When it vanishes the structure is flat.
\end{proposition}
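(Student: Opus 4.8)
The plan is to compute everything in the explicit coordinates $(u,z,\bar z)$ furnished by the rigid embedding, read off the structure functions $a,b,p,q,s$ of \nn{str} from $\der\lambda$ and $\der\mu$, and then substitute into the formulas for $k_1$ and $k_2$ preceding the theorem. First I would dispose of the shear: since $\mu=\der z$ is exact, $\der\mu\equiv 0$, and comparison with the middle line of \nn{str} forces $p\equiv q\equiv s\equiv 0$; in particular $s\equiv 0$ is the vanishing shear. A one-line exterior derivative of \nn{rigid}, using $\der(\der u)=0$ and $H_{z\bar z}=H_{\bar z z}$, gives $\der\lambda=iH_{z\bar z}\,\mu\dz\bar\mu$; comparing with the top line of \nn{str} yields $a=H_{z\bar z}$ and $b=0$. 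The standing hypothesis $H_{z\bar z}\neq 0$ is exactly $a\neq 0$, so the twist is non-vanishing while the shear vanishes; moreover the vanishing of $b,p,q$ is what makes the invariants collapse below.

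The substantive step is evaluating the invariant differential operators on $a$. I would first record the frame dual to $(\lambda,\mu,\bar\mu)$, namely $\partial_\lambda=\partial_u$, $\partial_\mu=\partial_z+\tfrac{i}{2}H_z\partial_u$ and $\partial_{\bar\mu}=\partial_{\bar z}-\tfrac{i}{2}H_{\bar z}\partial_u$; as a check, $[\partial_\mu,\partial_{\bar\mu}]=-ia\partial_\lambda$ reproduces the first commutator of \nn{com} with $p=0$. The decisive feature is \emph{rigidity}: $H$, and hence $a=H_{z\bar z}$ together with all its $z$- and $\bar z$-derivatives, are independent of $u$, so every $\partial_u$ correction term annihilates them. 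Consequently $a_\lambda=0$, $a_\mu=a_z$, $a_{\bar\mu}=a_{\bar z}$, and the two operators $\partial_\mu,\partial_{\bar\mu}$ commute when applied to any $u$-independent function; in particular the value of an iterated subscript such as $(\log a)_{\mu\bar\mu}$ is insensitive to the ordering convention.

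With $b=p=q=0$ the formula for $k_1$ reduces to $k_1=\mathrm{Re}\big((\log a)_{\mu\bar\mu}\big)$, and by the previous paragraph $(\log a)_{\mu\bar\mu}=(\log a)_{z\bar z}=[\log H_{z\bar z}]_{z\bar z}$, which is already real since $H$ real-valued makes $H_{z\bar z}$ real. Then \nn{k22} gives $K_1=\tfrac{1}{\rho^2}k_1=\tfrac{1}{\rho^2}[\log(H_{z\bar z})]_{z\bar z}$. Likewise $k_2$ reduces to $k_2=a_{\mu\lambda}=\partial_u(a_z)=0$ by $u$-independence, so $K_2\equiv 0$. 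Finally, flatness means $K_1\equiv K_2\equiv 0$ by definition; since $K_2\equiv 0$ holds unconditionally, flatness is equivalent to $K_1\equiv 0$, i.e. to $[\log H_{z\bar z}]_{z\bar z}=0$, which is the last assertion.

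I do not expect a genuine obstacle here. The only point demanding care is the bookkeeping of the $\tfrac{i}{2}H_z\partial_u$ correction terms in $\partial_\mu,\partial_{\bar\mu}$; the conceptual reason everything trivializes is precisely that these corrections act on $u$-independent data and therefore drop out, which is what simultaneously forces $k_2=0$ and reduces $k_1$ to a purely two-variable expression in $H_{z\bar z}$.
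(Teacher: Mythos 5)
Your computation is correct and is exactly the ``short calculation'' the paper alludes to without writing out: reading off $a=H_{z\bar z}$, $b=p=q=s=0$ from $\der\lambda=iH_{z\bar z}\,\mu\dz\bar\mu$ and $\der\mu=0$, then using $u$-independence of $H$ to reduce the frame derivatives $(\cdot)_\mu,(\cdot)_{\bar\mu}$ to $\partial_z,\partial_{\bar z}$ and to kill $k_2=a_{\mu\lambda}$. No gaps; the verification of the commutator $[\partial_\mu,\partial_{\bar\mu}]=-ia\partial_\lambda$ is a nice sanity check consistent with \nn{com}.
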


\begin{example}\label{epsi}
We remark that the Heisenberg group CR structure may have various
splittings that endow $M$ with nonequivalent structures of an
oriented congruence. To see this we perturb the standard splitting
on the Heisenberg group given by the vector field $\partial_u$. This
is accomplished by choosing a 2-parameter family of CR-functions on
$M$ given by
\be
\zeta_{\epsilon_1\epsilon_2}=\epsilon_1
z+\epsilon_2(u+i|z|^2),\label{zee}
\ee
and defining the structure of
an oriented congruence on $M$ via (\ref{ec}) with the forms
$$\lambda=\der u+i (z\der\bar{z}-\bar{z}\der z),\quad\quad
\mu_{\epsilon_1\epsilon_2}=\der\zeta_{\epsilon_1\epsilon_2}.$$ Note
that since $\lambda$ is a section of the characteristic bundle $H^0$
of the Heisenberg group CR-structure, and
$\mu_{\epsilon_1\epsilon_2}$ is the differential of a CR-function,
the structure $(M,[\lambda,\mu_{\epsilon_1\epsilon_2}])$ is {\it
  twisting} and
{\it without shear} for all values of the real parameters
$\epsilon_1$ and $\epsilon_2$. The real vector field $\bv$ which
gives the splitting on $M$ is given by
$$\bv=\partial_u+\frac{\epsilon_2}{\epsilon_1}~[~\frac{i\epsilon_1+2\epsilon_2
    z}{-i\epsilon_1+\epsilon_2(\bar{z}-z)}\partial_z+\frac{-i\epsilon_1+2\epsilon_2 \bar{z}}{i\epsilon_1+\epsilon_2(z-\bar{z})}\partial_{\bar{z}}~],$$
if $\epsilon_1\neq 0$, and
$$\bv=i(z\partial_z-\bar{z}\partial_{\bar{z}})$$
otherwise.
A short calculation shows that the relative invariants $K_{1\epsilon_1\epsilon_2}$ and $K_{2\epsilon_1\epsilon_2}$ for this
2-parameter family of structures are
$$K_{1\epsilon_1\epsilon_2}=\frac{8\epsilon_2^2}{\rho^2|2\epsilon_2z+i\epsilon_1|^4},\quad\quad
K_{2\epsilon_1\epsilon_2}\equiv 0.$$ This proves that the structures
with $\epsilon_2=0$ and $\epsilon_2\neq 0$ are {\it not} locally
equivalent. To analyse if the structures with $\epsilon_2\neq 0$ are
equivalent or not we need to apply further the Cartan equivalence
method. We will perform it in a more general setting than this
example.
\subsection{The case $K_1\neq 0$, $K_2\equiv 0$}\label{K1nK2} Let
$(M,[\lambda,\mu])$ be an arbitrary structure of an oriented
congruence which has nonvanishing twist, vanishing shear, and in
addition has the relative invariants $K_1$ and $K_2$ such that
$$K_1\neq 0\quad{\rm and}\quad K_2\equiv 0.$$ Given such a structure, using the
system (\ref{syste}) and the assumption $K_2\equiv 0$, we observe that the
corresponding structural form $\Omega$ has closed real part,
\be \der(\Omega+\bar{\Omega})\equiv 0. \label{bbu}\ee The
assumption that $K_1\neq 0$ enables us to make a further reduction of
the Cartan system (\ref{syste}) defining the invariants. Indeed
since $K_1=\frac{1}{\rho^2}k_1\neq 0$,
we may restrict ourselves to a (possibly double-sheeted)
hypersurface $N_0$ in $P$ on which
$$K_1=\pm1,$$
where the sign is determined by the sign of the function $k_1$.
Recall that this sign is an invariant of the structure.

Locally $N_0$ is a  circle bundle over
$M$ defined by the condition
$$\rho^2=|k_1|.$$
Now the system (\ref{syste}) when pullbacked to $N_0$ locally
reduces to \beq
\der\om&=&i\om_1\dz\bar{\om}_1+2\der A\dz\om\nonumber\\
\der\om_1&=&\der A\dz\om_1+i\Sigma\dz\omega_1\nonumber\\
\der\bar{\om}_1&=&\der A\dz\bar{\om}_1-i\Sigma\dz\bar{\omega}_1\label{systen}\\
\der\Sigma&=&\mp i\om_1\dz\bar{\om}_1.\nonumber \eeq
Here the real
1-form $\Sigma$ is the pullback of the form
$\tfrac{1}{2i}(\Omega-\bar{\Omega})$ from $P$ to $N_0$. According to our choice of $\Sigma$, the {\it minus} sign in
(\ref{systen}) corresponds to $K_1=+1$. The
differential $\der A$ of the real function $A$ on $N_0$ is determined by the condition that $2\der A$
is locally equal to the pullback of the $\Omega+\bar{\Omega}$ from $P$
to $N_0$. Note that this pullback must be closed due to (\ref{bbu}).
Looking at the explicit expression for $\Omega+\bar{\Omega}$ in
(\ref{omp}), (\ref{t}) and the integrability conditions for (\ref{systen}) we find that locally we have
\be
2\der A=A_1\omega_1+\bar{A}_1\bar{\omega}_1,\label{da}
\ee
with
\be
A_1=\frac{{\rm
e}^{-i\phi}}{\sqrt{|k_1|}}((\log\frac{|k_1|}{a})_{\mu}+b).\label{a1}
\ee
The
function $A_1$ gives a new relative invariant
for the structures $(M,[\lambda,\mu])$ with $K_1\neq 0$ and
$K_2\equiv 0$. It follows from the construction that two such
structures $(M,[\lambda,\mu])$ and $(M',[\lambda',\mu'])$ are
(locally) equivalent if there exists a (local) diffeomorphism of
the corresponding manifolds $N_0$ and ${N_0}'$ which transforms the
corresponding forms $(\omega,\omega_1,\bar{\omega}_1,\Sigma)$ to
$(\omega',\omega_1',\bar{\omega}_1',\Sigma')$. This in turn implies
that the relative invariant $A_1$ must be transformed to
$A_1'$.

\begin{remark}\label{reflatc}
We note that among
all the structures with $K_1\neq 0$ and $K_2\equiv 0$ the simplest have $A_1\equiv 0$.
Modulo local equivalence there are only two such structures, corresponding to the $\mp$ sign in (\ref{systen}) with
$A_1\equiv0$. These are the {\it `flat cases' for the subtree} in which $K_1\neq 0$
and $K_2\equiv 0$.
\end{remark}

The function $A$ defining the relative invariant $A_1$ is defined only up to the
addition of a constant, $A\to A+t$.
Given a family of functions $A(t)=A+t$ we consider the family of bilinear
forms
$G_A(t)$ on $N_0$ defined by
$$G_{A(t)}={\rm e}^{-2(A+t)}\om_1\om_2.$$
The forms $G_{A(t)}$ are clearly degenerate on $N_0$. Denoting by
$(X,X_1,\bar{X}_1,Y)$ the dual vector fields to the basis of 1-forms $(\om,\om_1,\bar{\om}_1,\Sigma)$ on
$N_0$, we see that the signature of $G_{A(t)}$ is $(+,+,0,0)$ with the degenerate
directions aligned with the real vector fields $X$ and $Y$. Next we observe that the
system (\ref{systen})
implies that $[X,Y]\equiv 0$, hence the distribution spanned by $X$
and $Y$ is integrable. Thus $N_0$ is foliated by real
2-dimensional leaves. Locally the leaf space $S$ of this foliation is a
2-dimensional real manifold, which is a Riemann surface, since
the pullback to $S$ of the 1-form $\om_1$ gives a basis for the $(1,0)$ forms. Now
the
formula (\ref{da}) implies that $X(A)=Y(A)\equiv 0$. Using this and
the system (\ref{systen}), a calculation shows that
$${\mathcal L}_X G_{A(t)}\equiv 0,\quad\quad {\mathcal L}_Y G_{A(t)}\equiv
0.$$
This means that the bilinear forms $G_{A(t)}$ descend to Riemannian
homothetic metrics $g_{A(t)}$ on the Riemann surface $S$. We have the following
theorem.
\begin{theorem}\label{ries}
The Riemann surface $S$ naturally associated with the structure of
an
oriented congruence having $K_1\neq 0$, $K_2\equiv 0$ possesses
Riemannian homothetic metrics $g_{A(t)}$ whose Gaussian curvatures $\kappa(t)$ are related
to the relative invariant $A_1$ via:
$$\kappa(t)=\mp {\rm e}^{2(A+t)},\quad\quad
{\rm i.e.}\quad\quad 2\der A=\der\log\kappa.$$
\end{theorem}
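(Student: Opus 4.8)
The plan is to compute the Gaussian curvature of $g_{A(t)}$ by the moving-frame (Cartan) method, working upstairs on $N_0$ with the reduced structural equations (\ref{systen}) and then descending to the leaf space $S$. First I would record the lift of the metric: on $N_0$ the degenerate form is $G_{A(t)}={\rm e}^{-2(A+t)}\om_1\bar{\om}_1$, so the natural unitary $(1,0)$-coframe is
\[
\theta={\rm e}^{-(A+t)}\om_1,\qquad g_{A(t)}=\theta\bar{\theta}.
\]
Because (\ref{da}) gives $2\der A=A_1\om_1+\bar{A}_1\bar{\om}_1$, which has no $\om$- or $\Sigma$-component, the function $A$ is constant along the leaves (those spanned by $X$ and $Y$); combined with the already established invariance ${\mathcal L}_X G_{A(t)}={\mathcal L}_Y G_{A(t)}=0$, this guarantees that $\theta$ descends, up to a phase along the $\Sigma$-circle, to a unitary coframe for $g_{A(t)}$ on $S$. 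Hence the curvature of $g_{A(t)}$ may be read off from the structure equations satisfied by $\theta$ on $N_0$ and then restricted to $S$.

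Next I would carry out the \emph{first structure equation} to identify the Levi-Civita connection form. Differentiating $\theta$ and substituting $\der\om_1=\der A\dz\om_1+i\Sigma\dz\om_1$ from (\ref{systen}), the two $\der A\dz\om_1$ terms cancel and one is left with
\[
\der\theta=i\,\Sigma\dz\theta.
\]
For a surface metric $g=\theta\bar{\theta}$ the real $1$-form $\gamma$ obeying $\der\theta=i\gamma\dz\theta$ is exactly the Levi-Civita connection $1$-form of the coframe $\theta$; therefore $\Sigma$ plays this role here. This is the conceptual heart of the argument: the reduction to $N_0$ has been arranged precisely so that the already-present form $\Sigma$ becomes the connection form of the descended metric.

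Finally I would apply the \emph{second structure equation}. From (\ref{systen}) directly, $\der\Sigma=\mp i\,\om_1\dz\bar{\om}_1=\mp i\,{\rm e}^{2(A+t)}\theta\dz\bar{\theta}$, which contains no $\om$ or $\Sigma$ and is therefore a horizontal multiple of the area form $\tfrac{i}{2}\theta\dz\bar{\theta}$ of $g_{A(t)}$; in particular it descends to $S$. Reading off the Gaussian curvature from $\der\gamma=\kappa\,(\text{area form})$ then yields $\kappa(t)=\mp{\rm e}^{2(A+t)}$ (up to the overall normalization constant fixed by one's conventions for the metric and for the sign of $\kappa$), with the top sign corresponding to $K_1=+1$ as in (\ref{systen}). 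Taking logarithms and using that $t$ is constant gives the equivalent invariant form $2\der A=\der\log\kappa$, which simultaneously re-exhibits $A_1$ through $2\der A=A_1\om_1+\bar{A}_1\bar{\om}_1$. As a consistency check, the family is homothetic, $g_{A(t)}={\rm e}^{-2t}g_{A(0)}$, so $\kappa(t)={\rm e}^{2t}\kappa(0)$, exactly the scaling a homothety must produce.

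The step I expect to be the main obstacle is not the differentiation but the bookkeeping that legitimizes descent: one must be sure that $\der\theta=i\Sigma\dz\theta$ and $\der\Sigma$ genuinely compute the intrinsic Levi-Civita data of $g_{A(t)}$ on the quotient $S$ rather than some connection artifact on the total space $N_0$. This rests on the cancellation of the $\der A$ terms (which needs $A$ basic) and on $\der\Sigma$ being purely horizontal, both of which are consequences of the specific normalized system (\ref{systen}); matching the overall constant and the sign of $\kappa$ to the stated $\mp{\rm e}^{2(A+t)}$ is then only a matter of pinning down the area-form and curvature-sign conventions.
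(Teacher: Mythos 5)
Your proof is correct and is essentially the computation the paper leaves implicit (the text only says ``a calculation shows'' for the descent and states the curvature formula without proof): the identities $\der\theta=i\Sigma\dz\theta$ and $\der\Sigma=\mp i\,{\rm e}^{2(A+t)}\theta\dz\bar\theta$, together with the horizontality/basic-ness checks you make, are exactly the structure-equation argument intended. The only loose end you flag --- the overall constant in $\kappa(t)$ --- traces to the paper's own factor-of-two ambiguity between the definition $G_{A(t)}={\rm e}^{-2(A+t)}\om_1\bar{\om}_1$ and the later explicit representatives $g_\mp=2\der z\der\bar z/(1\mp\tfrac12 z\bar z)^2$ (whose curvature is exactly $\mp1$, matching the theorem), so the stated normalization corresponds to taking $g_{A(t)}=2{\rm e}^{-2(A+t)}\om_1\bar{\om}_1$.
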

\noindent {\bf Example \ref{epsi} (continued)} Calculating $A_1$ for the
structures $(M,[\lambda,\mu_{\epsilon_1\epsilon_2}])$ of Example
\ref{epsi}, assuming that $\epsilon _2\neq 0$, we easily find that
for all $\epsilon_1$, and $\epsilon_2\neq 0$, we have $A_1\equiv 0$.
Thus for all nonzero values of $\epsilon_2$, and all
values of $\epsilon_1$, the structures {\it are} locally equivalent.
Hence the apparent 2-parameter family of the structures
$(M,[\lambda,\mu_{\epsilon_1\epsilon_2}])$ includes {\it only two
nonequivalent} cases; isomorphic to those with
$(\epsilon_1,\epsilon_2)=(1,0)$, and e.g. to those with
$(\epsilon_1,\epsilon_2)=(0,1)$. The first case is the flat case $K_1\equiv 0$, $K_2\equiv 0$,
corresponding to the Heisenberg group with the standard splitting.
The second case is considerably different, being one of the `flat cases'
for the subtree $K_1\neq 0$ and $K_2\equiv 0$, corresponding to $A_1\equiv 0$
and the {\it minus} sign in (\ref{systen}).
In particular the $(0,1)$ case has only
a 4-dimensional symmetry group, as opposed to the 5-dimensional
symmetry group of the $(1,0)$ case.

We would like to point out that if we were to choose a more
complicated CR function than the $\zeta_{\epsilon_1\epsilon_2}$ of
(\ref{zee}), for example
$$
\zeta=\epsilon_1 z+\epsilon_2(u+i|z|^2)^m,
$$
with $m\neq 0$ and $m\neq 1$, we would produce an oriented
congruence $(M,[\der u+i (z\der\bar{z}-\bar{z}\der z),\der\zeta])$, still twisting and without
shear, again based on the Heisenberg group, but not equivalent to
either of the two structures above. The reason for this is that the
condition $m\neq 0, m\neq 1$ makes $(M,[\der u+i (z\der\bar{z}-\bar{z}\der z),\der\zeta])$ have
the relative invariant $K_2$ nonvanishing.
\end{example}

We now give a local representation for an arbitrary structure
$(M,[\lambda,\mu])$ with vanishing shear, nonvanishing twist, and
with $K_1\neq 0$, $K_2\equiv 0$. This can be done by integration of the
system (\ref{systen}). Interestingly this integration can be performed
explicitly, leading to the following theorem.

\begin{theorem}\label{coct}
If $(M,[\lambda,\mu])$ is a structure of an oriented congruence
with vanishing shear, nonvanishing twist, and
with the relative invariants $K_1\neq 0$, $K_2\equiv 0$ then there
exists a coordinate system $(u,z,\bar{z})$ on $M$ such that the forms
$\lambda$ and $\mu$ representing the structure can be chosen to be
$$\lambda=\der u+\tfrac{i}{2}(H_{\bar{z}}\der \bar{z}-H_z\der z),\quad\quad \mu=\der z,$$
where the real functions $A=A(z)$ and $H=H(z)$ satisfy
the system of PDEs
\begin{eqnarray}
h_{z\bar{z}}&=&\mp {\rm e}^{2A}{\rm e}^{-h}\label{se1}\\
H_{z\bar{z}}&=&{\rm e}^{-h}\label{se2}
\end{eqnarray}
with a real function $h=h(z)$.
The structure corresponding to such $\lambda$ and $\mu$ satisfies the
system
\begin{eqnarray*}
\der\om&=&i\om_1\dz\bar{\om}_1+2\der A\dz\om\\
\der\om_1&=&\der A\dz\om_1+i\Sigma\dz\omega_1\\
\der\bar{\om}_1&=&\der A\dz\bar{\om}_1-i\Sigma\dz\bar{\omega}_1\\
\der\Sigma&=&\mp i\om_1\dz\bar{\om}_1
\end{eqnarray*}
with forms
$$\omega={\rm e}^{2A}\lambda,\quad\quad \omega_1={\rm e}^{A}{\rm e}^{-h/2}{\rm e}^{i
  \phi}\mu,
\quad\quad \bar{\omega}_1={\rm e}^{A}{\rm e}^{-h/2}{\rm e}^{-i \phi}\bar{\mu}$$
$$\Sigma=\der\phi+\tfrac{i}{2}(h_{\bar{z}}\der\bar{z}-h_z\der z).$$
The relative invariant $A_1$ of this structure is given by $$A_1=2{\rm e}^{-A}{\rm e}^{h/2}{\rm
  e}^{-i \phi}A_z.$$
\end{theorem}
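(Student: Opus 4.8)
The statement is a normal-form (integration) result, so the plan is to strip off the gauge freedom in stages, using the two hypotheses $s\equiv 0$ and $K_2\equiv 0$ to pin down $\mu$ and $\lambda$ respectively, and then to read the system (\ref{se1})--(\ref{se2}) and the coframe straight off the reduced equations (\ref{systen}) and the explicit expressions (\ref{n77}). First, because the shear vanishes, Lemma \ref{le} lets me replace $\mu$ by $\der z$ for a complex CR coordinate $z$; then $\der\mu\equiv 0$, so $p\equiv q\equiv 0$ and the structure equations (\ref{str1}) collapse to $\der\lambda=ia\,\der z\dz\der\bar{z}+b\,\der z\dz\lambda+\bar b\,\der\bar{z}\dz\lambda$.

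The crux is to remove $b$, i.e. to pass to a \emph{rigid} representative. Seeking $\hat\lambda=f\lambda$ with $f$ real, a direct computation gives the coefficient of $\der z\dz\hat\lambda$ in $\der\hat\lambda$ as $(\log f)_\mu+b$, so I must solve $(\log f)_\mu=-b$. Here $K_2\equiv 0$ enters: by (\ref{bbu}) it yields $\der(\Omega+\bar\Omega)\equiv 0$, and since $\Omega+\bar\Omega=2\der\log\rho+\beta$ with $\beta=(b-(\log a)_\mu)\mu+(\bar b-(\log a)_{\bar\mu})\bar\mu$ (use (\ref{omp}) with $t=0$), the real $1$-form $\beta$ on $M$ is closed. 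The Poincar\'e lemma then produces a real $B$ with $\beta=\der B$ (its $\lambda$-component vanishes, forcing $B_\lambda=0$, so $B=B(z,\bar{z})$), and the choice $f={\rm e}^{-B}/a$ gives $(\log f)_\mu=-b$, hence $\hat b=0$. For the rigidified form one finds $\der\hat\lambda=i\hat a\,\der z\dz\der\bar{z}$ with $\hat a=fa={\rm e}^{-B}$; applying $\der$ shows $\hat a_\lambda=0$, so $\hat a$ depends on $(z,\bar{z})$ only. Solving the real Poisson equation $H_{z\bar z}=\hat a$ and invoking the Poincar\'e lemma once more writes $\hat\lambda=\der u+\tfrac i2(H_{\bar z}\der\bar{z}-H_z\der z)$. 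Renaming $\hat\lambda$ as $\lambda$ and setting $h:=-\log\hat a=B$ gives exactly (\ref{se2}), $H_{z\bar z}={\rm e}^{-h}$, with $a={\rm e}^{-h}$.

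With the rigid representative in hand the rest is substitution. In this gauge $k_1={\rm Re}\,(\log a)_{\mu\bar\mu}=-h_{z\bar z}$, so $k_1$ is real and nonzero; restricting to $N_0=\{\rho^2=|k_1|\}$ and using $b=0$, formula (\ref{omp}) gives $2\der A=(\Omega+\bar\Omega)|_{N_0}=\der\log(|k_1|/a)$, whence ${\rm e}^{2A}=|k_1|/a={\rm e}^h|h_{z\bar z}|$. Reinserting the sign through $K_1=\sgn(k_1)=\pm1$ turns ${\rm e}^{2A}{\rm e}^{-h}=|h_{z\bar z}|$ into (\ref{se1}), $h_{z\bar z}=\mp{\rm e}^{2A}{\rm e}^{-h}$, with the $-$ sign matching $K_1=+1$ as in (\ref{systen}). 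Finally, substituting $\rho={\rm e}^A{\rm e}^{-h/2}$, ${\rm e}^{2A}=\rho^2/a$ and $\log a=-h$ into (\ref{n77}) reads off $\omega={\rm e}^{2A}\lambda$ and $\omega_1={\rm e}^A{\rm e}^{-h/2}{\rm e}^{i\phi}\mu$; computing $\tfrac1{2i}(\Omega-\bar\Omega)$ with $p=q=b=0$ gives $\Sigma=\der\phi+\tfrac i2(h_{\bar z}\der\bar{z}-h_z\der z)$; and (\ref{a1}) with $b=0$, $\sqrt{|k_1|}=\rho$ and $2A=\log(|k_1|/a)$ collapses to $A_1=2{\rm e}^{-A}{\rm e}^{h/2}{\rm e}^{-i\phi}A_z$.

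The main obstacle is the rigid reduction: recognizing that $K_2\equiv 0$ is precisely what makes $\beta$ closed, so that the single real potential $B$ (which becomes $h$) can be extracted by the Poincar\'e lemma. Once $b$ is gauged away everything else is forced, and the two PDEs (\ref{se1})--(\ref{se2}) are simply the record of the two remaining normalizations $\rho^2=|k_1|$ (determining $A$) and $a=H_{z\bar z}$ (determining $h$). I would also double-check the sign bookkeeping---the freedom to take $f<0$ makes $\hat a={\rm e}^{-B}>0$ automatic, so strict pseudoconvexity comes out with the correct sign---and confirm that the residual reparametrization freedom $z\mapsto Z(z)$ holomorphic, $u\mapsto u+\mathrm{const}$, is consistent with $A,h,H$ depending on $(z,\bar{z})$ alone.
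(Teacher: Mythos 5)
Your proposal is correct and follows exactly the route the paper's construction implies: the paper itself only asserts that "this integration can be performed explicitly," and your argument supplies the missing details in the natural way, using $K_2\equiv 0\Rightarrow\der(\Omega+\bar\Omega)\equiv 0$ to produce the potential $h$ that gauges the structure to a rigid representative, after which (\ref{se1})--(\ref{se2}), the coframe, $\Sigma$, and $A_1$ all drop out of (\ref{n77}), (\ref{omp}) and (\ref{a1}) by substitution. The sign bookkeeping ($k_1=-h_{z\bar z}$, the upper sign in $\mp$ matching $K_1=+1$) is also consistent with (\ref{systen}).
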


Note that the system of PDEs (\ref{se1})-(\ref{se2}) is underdetermined.
To see that it always has solutions, choose a real function $H=H(z)$ on the complex plane.
Define the real function $h=h(z)$ via equation (\ref{se2}), insert it into equation (\ref{se1}) and solve this
{\it real} PDE for a real function $A=A(z)$. Since the function $H$ can be chosen arbitrarily, returning to
Example \ref{rigid1}, we see that this theorem characterizes the oriented congruences
which are locally equivalent to those defined on rigid CR manifolds
with the standard splitting.

\begin{corollary}\label{kolo}
Every structure $(M,[\lambda,\mu])$ of an oriented congruence
with vanishing shear, nonvanishing twist, and
with the relative invariants $K_1\neq 0$, $K_2\equiv 0$ admits
\emph{one} symmetry.
\end{corollary}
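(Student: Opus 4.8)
The plan is to read the symmetry off directly from the explicit normal form provided by Theorem \ref{coct}. That theorem produces coordinates $(u,z,\bar{z})$ on $M$ in which the structure is represented by $\lambda=\der u+\tfrac{i}{2}(H_{\bar{z}}\der\bar{z}-H_z\der z)$ and $\mu=\der z$, where the real functions $H=H(z)$, $h=h(z)$ and $A=A(z)$ entering the construction depend on $z$ and $\bar{z}$ alone and are \emph{independent of} $u$. The essential point is therefore that the whole structure is invariant under translation in the $u$-variable, so the natural candidate for a symmetry is the coordinate vector field $X=\partial_u$.

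First I would verify that $X=\partial_u$ satisfies the defining conditions ${\mathcal L}_X\lambda=f\lambda$ and ${\mathcal L}_X\mu=h\mu$ of Definition \ref{d3}. Using Cartan's formula ${\mathcal L}_X=\der\circ i_X+i_X\circ\der$, I note that $i_{\partial_u}\mu=i_{\partial_u}\der z=0$ and $\der\mu=\der\der z=0$, so ${\mathcal L}_{\partial_u}\mu=0$; thus the condition on $\mu$ holds with $h\equiv 0$. For $\lambda$ one has $i_{\partial_u}\lambda=1$, whence $\der(i_{\partial_u}\lambda)=0$; moreover, because $H$ is independent of $u$, the two-form $\der\lambda$ is a multiple of $\der z\dz\der\bar{z}$ and contains no $\der u$, so $i_{\partial_u}\der\lambda=0$ as well. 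Hence ${\mathcal L}_{\partial_u}\lambda=0$, and the condition on $\lambda$ holds with $f\equiv 0$. This exhibits $\partial_u$ as a genuine symmetry of $(M,[\lambda,\mu])$, establishing that at least one symmetry always exists.

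The only point requiring care is that being a symmetry is a property of the equivalence class $[\lambda,\mu]$ rather than of the chosen representatives; but since $f$ and $h$ are explicitly permitted to vanish identically in Definition \ref{d3}, the computation ${\mathcal L}_{\partial_u}\lambda=0$, ${\mathcal L}_{\partial_u}\mu=0$ already verifies the requirement with the zero functions, and no further adjustment is needed. I do not expect a genuine obstacle: once the normal form of Theorem \ref{coct} is in hand, the corollary reduces to the elementary observation that all the defining data are $u$-independent. The emphasis in the statement is simply that this single translational symmetry is guaranteed for \emph{every} such structure, in contrast with the distinguished flat subtree cases $A_1\equiv 0$ of Remark \ref{reflatc}, which enjoy strictly larger symmetry groups.
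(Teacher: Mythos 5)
Your proposal is correct and follows exactly the paper's own argument: the paper proves Corollary \ref{kolo} by noting that in the local representation of Theorem \ref{coct} the symmetry is generated by $X_0=\partial_u$. Your additional verification via Cartan's formula, using the $u$-independence of $H$, is just the detail the paper leaves implicit.
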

\begin{proof}
To proof this it is enough to check that in the local representation (\ref{se1})-(\ref{se2}) the symmetry
is generated by $X_0=\partial_u$.
\end{proof}

Starting with a structure $(M,[\lambda,\mu])$ having $K_1\neq 0$ and $K_2\equiv
0$ we constructed its associated circle bundle $\bbS^1\to N_0\to M$
equipped with the invariant forms
$(\omega,\om_1,\bar{\om}_1,\Sigma)$. Using the dual basis
$(X,X_1,\bar{X}_1,Y)$ and the system (\ref{systen}) we see that the
symmetry $X_0$ lifts to a vector field $\tilde{X}={\rm e}^{2A}X$
with the property that
$${\mathcal L}_{\tilde{X}}\Sigma=0,\quad\quad {\mathcal
L}_{\tilde{X}}\om_1=2\tilde{X}(A)\om_1.$$
We now introduce a quotient 3-dimensional manifold $M_\Sigma$ whose points are the integral curves
of $\tilde{X}$.
Then the forms $\Sigma$ and $\omega_1$ descend from $N_0$ to a class of forms
$[\Sigma,\om_1]$ on $M_\Sigma$ given up to the transformations
$\Sigma\to\Sigma$, $\om_1\to h\om_1$. Thus they can be used to
define a structure of an oriented
congruence $(M_\Sigma,[\Sigma,\omega_1])$. This structure
naturally associated with $(M,[\lambda,\mu])$ may be locally represented by
the coordinates $(\phi,z,\bar{z})$ of Theorem \ref{coct} with the
representatives $\Sigma$ and $\omega_1$ given by
$$\Sigma=\der\phi+\tfrac{i}{2}(h_{\bar{z}}\der\bar{z}-h_z\der
z),\quad\quad\om_1=\der z.$$
Here the real function $h=h(z)$ is related to the original structure
$(M,[\lambda,\mu])$ via equations (\ref{se1})-(\ref{se2}). In
particular $(M_\Sigma,[\Sigma,\om_1])$ is again based on a rigid CR
structure with the standard splitting.\\

Now we use Theorem \ref{coct} to describe all the structures with $K_1\neq
0$ and $K_2\equiv 0$ which
have a 4-dimensional transitive symmetry group. It turns out that they must be
equivalent to those with $\der A\equiv 0$. This is because the
existence of a
4-dimensional transitive symmetry group implies that $A_1$ must be a
constant. But since $A$ and $h$ depend only on $z$ and $\bar{z}$, and
$A_1$ has nontrivial ${\rm e}^{i\phi}$ dependence, it is possible
iff $A_z\equiv 0$; hence $A_1\equiv 0$. Thus according to Remark \ref{reflatc}
there are only two such structures.
One of them, the one with the {\it upper} sign in (\ref{systen}),
is equivalent to the structure $(\epsilon_1,\epsilon_2)=(0,1)$ of Example \ref{epsi}. To find the second one
we use Theorem \ref{coct} and integrate equations
(\ref{se1})-(\ref{se2}) for $A=0$. Modulo equivalence we get two solutions
$$h_\mp=2\log (1\mp\tfrac12 z\bar{z}),\quad\quad H_\mp=\mp 2\log
(1\mp\tfrac12 z\bar{z}),\quad\quad A=0$$
which lead to the two nonequivalent `flat models' with $K_1=\pm 1$, $A_1\equiv 0$. These are generated by the forms
\be
\lambda_\mp=\der u+\tfrac{i}{2}\frac{z\der\bar{z}-\bar{z}\der z}{1\mp\tfrac12 z\bar{z}},\quad\quad \mu=\der z.\label{taub}
\ee
Obviously the structure corresponding to the upper sign is isomorphic to the structure
$(\epsilon_1,\epsilon_2)=(0,1)$ of Example \ref{epsi}. Interestingly, in either of the two nonequivalent cases
the forms $(\lambda,\mu)$ can be used to intrinsically define a {\it flat}
CR structure (in the sense of Cartan's paper \cite{cartan}) on $M$ parametrized by
$(u,z,\bar{z})$. Another feature of these two nonequivalent
structures is that their Riemann surface $S_{\mp}$ described by
Theorem \ref{ries} is equipped with metrics $g_{A(t)}$ which may be
represented by
$$g_\mp=\frac{2\der z\der\bar{z}}{(1\mp\tfrac12 z\bar{z})^2}.$$
Thus these Riemann surfaces are either locally homothetic to the Poincar\'e disc
(in the upper sign case) or to the 2-dimensional
sphere $\bbS^2$ (in the lower sign case). This leads to the following definition.
\begin{definition}
The two structures of an oriented congruence $(M,[\lambda_\mp,\mu])$
generated by the forms $\lambda_\mp,\mu$ of (\ref{taub})
are called the \emph{Poincar\'e disc structure} (in the upper sign case) and the
\emph{spherical structure} (in the lower sign case).
\end{definition}
\noindent
We further note that the
natural structures $(M_\Sigma,[\Sigma_\mp,\om_1])$ associated with the structures (\ref{taub})
are locally isomorphic to the original structures $(M,[\lambda_\mp,\mu])$. Finally we note that the forms
$\lambda_+, \mu$ are
identical with the forms which appear in the celebrated vacuum Taub-NUT solution of
the Lorentzian Einstein field equations 
(see formulae (\ref{kke})-(\ref{kke1}) 
with $K-1=m=a=0$ and with the coordinate $z$ replaced by $2/z$). We summarize the considerations of this paragraph in the following Theorem.
\begin{theorem}\label{flt}
All structures $(M,[\lambda,\mu])$ of an oriented congruence with vanishing shear, nonvanishing twist, having the
relative invariants $K_1\neq 0$, $K_2\equiv 0$ and possessing a 4-dimensional transitive
symmetry group are locally isomorphic to either the Poincar\'e disc structure $(M,[\lambda_-,\mu])$ or
the spherical structure $(M,[\lambda_+,\mu])$,
i.e. they are isomorphic to one of the 'flat models' for the $K_1\neq 0$ and $K_2\equiv 0$
case.
\end{theorem}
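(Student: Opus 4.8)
The plan is to work entirely on the reduced bundle $\bbS^1\to N_0\to M$, on which the structure is completely captured by the invariant coframe $(\om,\om_1,\bar{\om}_1,\Sigma)$ together with the single surviving relative invariant $A_1$, governed by the structural equations (\ref{systen}). First I would observe that, since the coframe is canonically associated with $(M,[\lambda,\mu])$, every symmetry of the structure lifts to $N_0$ and there preserves the full coframe (including $\Sigma$, not just $(\om,\om_1,\bar{\om}_1)$). A connected group preserving an absolute parallelism on the connected four-manifold $N_0$ acts freely, hence has dimension at most four, with equality exactly when it acts simply transitively; a four-dimensional transitive symmetry group is precisely this borderline case. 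Simple transitivity forces $(\om,\om_1,\bar{\om}_1,\Sigma)$ to be a Maurer--Cartan coframe, so every coefficient in the exterior derivatives of these forms must be constant. Inspecting (\ref{systen}) via (\ref{da}), all such coefficients are the fixed constants coming from $K_1=\pm1$, except for the ones carrying $A_1$ through the terms $\der A$; constancy of the parallelism therefore forces $A_1$ to be constant.

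The second step converts this constancy into the vanishing $A_1\equiv 0$. By Theorem \ref{coct} the structure admits a local representative in coordinates $(u,z,\bar{z})$ with $A=A(z,\bar{z})$ and $h=h(z,\bar{z})$ independent of the fibre coordinate $\phi$, and there
$$A_1=2{\rm e}^{-A}{\rm e}^{h/2}{\rm e}^{-i\phi}A_z.$$
Since $A$ and $h$ carry no $\phi$-dependence, the entire $\phi$-dependence of $A_1$ resides in the factor ${\rm e}^{-i\phi}$. Hence a nonzero constant value is impossible, and constancy of $A_1$ can hold only when $A_z\equiv 0$, i.e. $A_1\equiv 0$. At this point I would invoke Remark \ref{reflatc}: modulo local equivalence there are exactly two structures with $K_1\neq 0$, $K_2\equiv 0$, $A_1\equiv 0$, distinguished by the $\mp$ sign in (\ref{systen}).

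Finally I would make these two models explicit. Setting $A\equiv 0$ collapses the underdetermined system (\ref{se1})--(\ref{se2}) to the single Liouville-type equation $h_{z\bar{z}}=\mp\,{\rm e}^{-h}$, whose rotationally symmetric solutions are, up to equivalence, $h_\mp=2\log(1\mp\tfrac12 z\bar{z})$, with $H_\mp=\mp 2\log(1\mp\tfrac12 z\bar{z})$ recovered from (\ref{se2}). Substituting into the representative of Theorem \ref{coct} produces precisely the forms $\lambda_\mp,\mu$ of (\ref{taub}), that is, the Poincar\'e disc structure and the spherical structure; reading off the induced metric $g_\mp=2\der z\der\bar{z}/(1\mp\tfrac12 z\bar{z})^2$ on the Riemann surface $S$ of Theorem \ref{ries} confirms the identification through its constant Gaussian curvature of either sign.

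The step I expect to be the main obstacle is the first one: rigorously promoting the hypothesis ``four-dimensional transitive symmetry group'' to ``every structure function of the parallelism on $N_0$ is constant,'' and in particular verifying that symmetries genuinely lift to $N_0$ preserving the entire invariant coframe. This is the conceptual bridge between the symmetry hypothesis and the Cartan machinery; once $A_1$ is known to be constant, the remaining deductions---its vanishing from the explicit ${\rm e}^{-i\phi}$-dependence, the count supplied by Remark \ref{reflatc}, and the explicit integration of (\ref{se1})--(\ref{se2})---are routine.
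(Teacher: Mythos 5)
Your proposal is correct and follows essentially the same route as the paper: constancy of $A_1$, then $A_1\equiv 0$ via the explicit ${\rm e}^{-i\phi}$-dependence in Theorem \ref{coct}, then Remark \ref{reflatc} and the integration of (\ref{se1})--(\ref{se2}) with $A=0$ to exhibit the two models of (\ref{taub}). The only addition is your careful justification (via the free action on the absolute parallelism on $N_0$) of the step the paper merely asserts, namely that a $4$-dimensional transitive symmetry group forces $A_1$ to be constant; that justification is sound.
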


We now pass to the determination of all local invariants for the structures with $A_1\neq
0$. Let $(M,[\lambda,\mu])$ be such a structure with the
corresponding circle bundle $N_0$ and the system of invariants
(\ref{systen}). Looking at the explicit form (\ref{a1}) of the relative invariant $A_1$,
we see that we may always choose a section of the bundle $N_0$ such that $A_1$ is {\it
real} and {\it positive}. Locally this corresponds to the choice of $\phi$ as a
function on the manifold $M$ such that
\be
\frac{{\rm
e}^{-i\phi}}{\sqrt{|k_1|}}((\log\frac{|k_1|}{a})_{\mu}+b)=\frac{{\rm
e}^{i\phi}}{\sqrt{|k_1|}}((\log\frac{|k_1|}{a})_{\bar{\mu}}+\bar{b})>0.
\label{a1real}
\ee
If $\phi$ satisfies (\ref{a1real}) then
$$A_1>0,$$
and all the structural objects defined by the system (\ref{systen})
may be uniquely pullbacked to $M$. As the result of this pullback
the real 1-form $\Sigma$ becomes dependent on the pullbacked forms
$(\om,\om_1,\bar{\om}_1)$. Since these three 1-forms constitute a
coframe on $M$ we may write
$\Sigma=B_0\om+B_1\om_1+\bar{B}_1\bar{\om}_1$
where $B_0$ (real) and $B_1$ (complex) are functions on $M$. Now using the fact that these structures admit a
symmetry (Corollary \ref{kolo}), we get $B_0\equiv 0$. Hence
$$\Sigma=B_1\om_1+\bar{B}_1\bar{\om}_1.$$
With
this notation the pullbacked system (\ref{systen}) becomes
\begin{eqnarray}
\der\om&=&i\om_1\dz\bar{\om}_1+2A_1(\om_1+\bar{\om}_1)\dz\om\nonumber\\
\der\om_1&=& -(A_1+i\bar{B}_1)\om_1\dz\bar{\om}_1\label{systek}\\
\der\bar{\om}_1&=&(A_1-iB_1)\om_1\dz\bar{\om}_1,\nonumber
\end{eqnarray}
with the fourth equation given by
\be
\der (B_1\om_1+\bar{B}_1\bar{\om}_1)=\mp
i\om_1\dz\bar{\om}_1.\label{systekk}
\ee
\begin{remark}
Note that since on $N_0$ the complex function
$A_1$ was constrained by $\der
(A_1\om_1+\bar{A}_1\bar{\om}_1)=0$, because of (\ref{da}),  
the equations (\ref{systek})-(\ref{systekk}) should be supplemented by
the equation
$\der[A_1(\om_1+\bar{\om}_1)]=0$ for $A_1>0$. This however is equivalent to 
$$\der
  A_1\dz(\om_1+\bar{\om}_1)=[iA_1(B_1+\bar{B}_1)]\om_1\dz\bar{\om}_1,$$ 
and turns out to follow from the integrability conditions for (\ref{systek})-(\ref{systekk}).
\end{remark}

Writing these integrability conditions explicitly we have:
\begin{eqnarray}
\der
A_1&=&[a_{11}+\tfrac{i}{2}A_1(B_1+\bar{B}_1)]\om_1+[a_{11}-\tfrac{i}{2}A_1(B_1+\bar{B}_1)]\bar{\om}_1\nonumber\\
\der B_1&=&B_{11}\om_1+[b_{12}+\tfrac12 A_1(\bar{B}_1-B_1)+i(\pm\tfrac{1}{2}-|B_1|^2)]\bar{\om}_1\label{itg}\\
\der\bar{B}_1&=&[b_{12}-\tfrac12
A_1(\bar{B}_1-B_1)-i(\pm\tfrac{1}{2}-|B_1|^2)]\om_1+\bar{B}_{11}\bar{\om}_1,\nonumber
\end{eqnarray}
where the real functions $a_{11}$, $b_{12}$ are the scalar invariants of
the next higher order than $A_1$ and $B_1$.
\begin{theorem}
The functions $A_1>0$ and $B_1$ (complex) constitute the full system of basic scalar
invariants for the structures $(M,[\lambda,\mu])$ with $K_1\neq 0$, $K_2\equiv 0$ and $A_1\neq 0$.
It follows from the construction that two such
structures $(M,[\lambda,\mu])$ and $(M',[\lambda',\mu'])$ are
(locally) equivalent iff there exists a (local) diffeomorphism between 
$M$ and $M'$ which transforms the
corresponding forms $(\omega,\omega_1,\bar{\omega}_1)$ to
$(\omega',\omega_1',\bar{\omega}_1')$. This in particular implies
that the invariants $A_1$ and $B_1$ must be transformed to
$A_1'$ and $B_1'$.
\end{theorem}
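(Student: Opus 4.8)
The plan is to recognize this theorem as the terminal step of the Cartan reduction begun in Section~\ref{sec5}: under the hypotheses $K_1\neq 0$, $K_2\equiv 0$, $A_1\neq 0$, the structure group has been reduced all the way to the identity, so that $(\om,\om_1,\bar{\om}_1)$ is a \emph{canonical coframe} on $M$ itself --- an $\{e\}$-structure. First I would verify that this reduction is genuinely complete. The real scale $f$ was fixed by (\ref{f}), the radial coordinate by $\rho^2=|k_1|$, and the last remaining $\bbS^1$ gauge freedom, the fiber angle $\phi$, is pinned down by the normalization (\ref{a1real}): since $A_1\neq 0$ one can solve (\ref{a1real}) for $\phi$, and the extra requirement $A_1>0$ removes the residual ambiguity up to a discrete choice. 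Together with $B_0\equiv 0$, forced by the symmetry of Corollary~\ref{kolo}, this pulls the entire structural package down to $M$, leaving no continuous residual freedom. This is the crucial point, and the hypothesis $A_1\neq 0$ is exactly what makes it possible.

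Once the $\{e\}$-structure is in hand, I would invoke the fundamental theorem on the equivalence of coframes: two manifolds carrying canonical coframes are locally equivalent if and only if there is a local diffeomorphism pulling one coframe back to the other, and a complete system of local scalar invariants is generated by the structure functions appearing in $d\om,d\om_1,d\bar{\om}_1$ together with all their successive frame-derivatives. Reading off the structure equations (\ref{systek})--(\ref{systekk}), the only structure functions are $A_1$ and $B_1$ (and $\bar{B}_1$). Hence these are precisely the \emph{basic} scalar invariants, and the higher-order quantities $a_{11},b_{12}$ appearing in the integrability conditions (\ref{itg}) are their frame-derivatives --- derived, not independent, invariants. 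That the prolongation closes, i.e. that $dA_1$ and $dB_1$ introduce no genuinely new fiber directions but only the well-defined functions $a_{11},b_{12}$ on $M$, is exactly the content of (\ref{itg}).

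For the equivalence criterion I would argue both directions. In the forward direction, suppose $(M,[\la,\mu])$ and $(M',[\la',\mu'])$ are equivalent as oriented congruences in the sense of Definition~\ref{d3}. Because every step of the reduction --- the structure equations (\ref{str1}), the normalizations (\ref{f}) and $\rho^2=|k_1|$, and (\ref{a1real}) --- is intrinsic and canonical, an equivalence diffeomorphism necessarily intertwines the two canonical coframes, so there is a diffeomorphism $\psi\colon M\to M'$ with $\psi^*(\om',\om_1',\bar{\om}_1')=(\om,\om_1,\bar{\om}_1)$. Conversely, given such a $\psi$, I would unwind the definitions $\om=\tfrac{\rho^2}{a}\la$ and $\om_1=\rho\,{\rm e}^{i\phi}\mu$ to recover $\psi^*\la'=f\la$ and $\psi^*\mu'=h\mu$ for suitable nonvanishing real $f$ and complex $h$, which is exactly the equivalence (\ref{efi}) of the two oriented-congruence structures. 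Finally, since a coframe-preserving $\psi$ automatically preserves the structure functions of the coframe, it carries $A_1,B_1$ to $A_1',B_1'$, giving the last assertion.

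The main obstacle is the completeness of the reduction described in the first paragraph: one must confirm that (\ref{a1real}) is solvable and fixes $\phi$ up to at most a discrete choice, thereby eliminating the final gauge direction and certifying that $(\om,\om_1,\bar{\om}_1)$ really is an $\{e\}$-structure on $M$ rather than on a nontrivial bundle over it. A secondary, more routine check is that the integrability conditions (\ref{itg}) are consistent --- that the $\om_1\dz\bar{\om}_1$-component relation noted in the preceding Remark indeed follows from (\ref{systek})--(\ref{systekk}) --- so that $a_{11}$ and $b_{12}$ are honest functions on $M$ and the entire hierarchy of invariants is generated by $A_1$ and $B_1$ alone.
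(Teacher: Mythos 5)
Your proposal is correct and follows essentially the same route as the paper: the theorem is stated there as an immediate consequence of the preceding Cartan reduction, in which (\ref{a1real}) uses $A_1\neq 0$ to fix the section of $N_0$ (hence $\phi$), $B_0\equiv 0$ follows from Corollary~\ref{kolo}, and the resulting $\{e\}$-structure (\ref{systek})--(\ref{systekk}) on $M$ has $A_1,B_1$ as its only structure functions, with $a_{11},b_{12}$ arising as their coframe derivatives in (\ref{itg}). Your write-up merely makes explicit the appeal to the equivalence theorem for coframes and the unwinding of $\om=\tfrac{\rho^2}{a}\lambda$, $\om_1=\rho\,{\rm e}^{i\phi}\mu$ in the converse direction, which is exactly what the paper leaves implicit.
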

The system (\ref{systek})-(\ref{itg}) and the above theorem can be
used to find all structures with $K_1\neq 0$ and $K_2\equiv 0$
having a
strictly 3-dimensional transitive symmetry group. These are the
structures described by the system (\ref{systek})-(\ref{itg}) with
{\it constant} basic invariants $A_1>0$, $B_1$. It follows that it
is possible only if $B_1=i\tau$, $A_1=\frac{\pm 1-2\tau^2}{2\tau}>
0$ and $\tau\neq 0$ is a real parameter. This leads to {\it only} two quite different cases, which are described
by Propositions \ref{pr3s1} and \ref{pr3s2}.
\begin{proposition}\label{pr3s1}
\emph{(i)} All locally nonequivalent structures $(M,[\lambda,\mu])$ of oriented
congruences having vanishing shear, nonvanishing twist, $K_1\neq 0$, $K_2\equiv 0$, and
\emph{possessing a strictly 3-dimensional transitive group} $G_h$ \emph{of symmetries of Bianchi type} $VI_h$, $h\leq 0$, may
be locally represented by
$$\lambda = y^b \der u - y^{-1}\der x,\quad\quad \mu = y^{-1}(\der x + i \der
y).$$
Here $(u,z,\bar{z})$ with $z=x+i y$ are coordinates on $M$ and
$$b=-2(1\mp 2\tau^2).$$
The real parameter $\tau$ is related to the invariants $B_1$ and $A_1$ via
$$B_1=i\tau,\quad\quad A_1=-\frac{\mp 1+2\tau^2}{2\tau}>0
,$$
and as such enumerates nonequivalent structures.\\
\emph{(ii)} Regardless of the values of $\tau$ the structures corresponding to the upper and lower signs in
the expressions above are nonequivalent.
In the case of the lower signs the real parameter $\tau< 0$. In the case of the upper
signs $\tau<-\frac{1}{\sqrt{2}}$ or $0<\tau<\frac12$ or $\frac12<\tau<\frac{1}{\sqrt{2}}$.\\
\emph{(iii)} The structures are locally \emph{CR equivalent} to the Heisenberg group
CR structure only in the case of the upper signs with
$\tau=\frac{\sqrt{3}}{2\sqrt{2}}$.\\
\emph{(iv)} The symmetry group is of Bianchi type $VI_h$, with
the parameter $h\leq 0$ related to $\tau$ via
$$h=-\Big(\frac{3\mp 4\tau^2}{1\mp4 \tau^2}\Big)^2.$$
In the lower sign case the possible values of $h$ are $-9< h<-1$, and for each value of $h$ we
always have \emph{one} structure with the symmetry group
$G_h$. In the upper sign case $h$ may assume all values $h\leq 0$, $h\neq -1$.
In this case, we always have
\begin{itemize}
\item[-]\emph{two} nonequivalent structures with symmetry group $G_h$ with
$h<-9$;
\item[-]\emph{one} structure with symmetry group $G_h$ with
$-9\leq h<-1$; if the parameter $\tau$ is
$\tau=\frac{\sqrt{3}}{2\sqrt{2}}$ then $h=-9$ and the structure is based
on the Heisenberg group with a particular \emph{nonstandard}
splitting;
\item[-]\emph{two} nonequivalent structures with symmetry group $G_h$ with
$-1<h<0$;
\item[-]\emph{one} structure with symmetry group of Bianchi type $VI_0$.
\end{itemize}
\end{proposition}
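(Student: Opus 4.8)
The plan is to specialize the structural system (\ref{systek})--(\ref{itg}) to the homogeneous situation and then integrate it in closed form. First I would record that a strictly $3$-dimensional \emph{transitive} symmetry group forces the basic scalar invariants $A_1>0$ and $B_1$ to be constant on $M$. Imposing $\der A_1=0$ in (\ref{itg}) and reading off the coefficient of $\om_1$, namely $a_{11}+\tfrac{i}{2}A_1(B_1+\bar B_1)$, separates into its real and imaginary parts: since $A_1>0$ this yields $a_{11}=0$ and $B_1+\bar B_1=0$, so $B_1=i\tau$ with $\tau$ real. Imposing $\der B_1=0$ then forces $B_{11}=0$, $b_{12}=0$, and the single real relation $A_1\tau=\pm\tfrac12-\tau^2$, i.e. $A_1=\frac{\pm 1-2\tau^2}{2\tau}$. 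This reproduces the constraint quoted just before the proposition and already fixes the relations between $\tau$, $A_1$ and $B_1$ asserted in parts (i) and (ii).

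With $A_1,B_1$ constant the equations (\ref{systek})--(\ref{systekk}) close up with constant coefficients, so they are the Maurer--Cartan equations of a $3$-dimensional Lie algebra with left-invariant coframe $(\om,\om_1,\bar\om_1)$ on $M$; I would integrate them directly. Using $B_1=i\tau$ gives $\der\om_1=-(A_1+\tau)\om_1\dz\bar\om_1$, which is the standard hyperbolic Riemann-surface structure equation and integrates so that $\mu$ may be taken to be $y^{-1}(\der x+i\der y)$ on the upper half-plane. The remaining equation $\der\om=i\om_1\dz\bar\om_1+2A_1(\om_1+\bar\om_1)\dz\om$ then integrates to $\lambda=y^{b}\der u-y^{-1}\der x$, the exponent $b$ being fixed by matching the coefficient $2A_1$; substituting $A_1=\frac{\pm1-2\tau^2}{2\tau}$ produces $b=-2(1\mp2\tau^2)$. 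This is exactly part (i). As a consistency check I would recompute, from (\ref{str}) for these explicit $\lambda,\mu$, the twist $a$ and the invariant $k_1$, obtaining $k_1=\pm\tau^2$; since the sign of $k_1$ is an invariant of the structure (the Theorem of \S\ref{ncrn}), this shows at once that the upper- and lower-sign families are never equivalent, which is the first assertion of (ii).

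For part (iv) I would read the symmetry algebra directly off the explicit forms. The coefficients depend only on $y$, so $X_1=\partial_u$ and $X_2=\partial_x$ are symmetries, and the scaling $X_3=-bu\partial_u+x\partial_x+y\partial_y$ is the third; a short bracket computation gives $[X_1,X_2]=0$, $[X_3,X_1]=bX_1$, $[X_2,X_3]=X_2$, so $\langle X_1,X_2\rangle$ is an abelian ideal on which $\mathrm{ad}_{X_3}$ acts with eigenvalues $b$ and $-1$. Comparing this adjoint action with the normal form in the Bianchi table identifies the type as $VI_h$ and determines $h$ as the stated function of $\tau$ through the eigenvalue ratio $-b=2\mp4\tau^2$. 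The two degenerate ratios, equal eigenvalues (type $V$) and a vanishing eigenvalue, occur precisely at $\tau=\tfrac12$ (upper sign) and at the endpoints where $A_1=0$; excising these, together with the positivity requirement $A_1>0$, carves out exactly the $\tau$-ranges in (ii) and isolates the $VI_0$ and boundary cases. Finally, for fixed admissible $h$, counting the number of $\tau$ in the allowed range that solve $h=h(\tau)$ gives the one-versus-two tallies listed in (iv).

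I expect two genuine obstacles. The first is organizing the explicit integration so that the residual gauge freedom (the phase $\phi$ and the additive constant in $A$) is used up cleanly, so that the coordinates $(u,x,y)$ and the single exponent $b$ emerge canonically rather than as an unmotivated ansatz; here the explicit integrability already exploited in Theorem \ref{coct} is the enabling input. The second, and more delicate, is part (iii): CR-equivalence to the Heisenberg group is governed by Cartan's \emph{coarser} CR invariant, not by $(K_1,K_2,A_1,B_1)$, so one must express the relative CR curvature of the underlying $(M,H,J)$ in terms of the present data and solve for its vanishing. This singles out the upper-sign value $\tau=\frac{\sqrt3}{2\sqrt2}$, at which the CR structure is flat while the splitting is nonstandard; the subtle point is verifying that \emph{no} other admissible $\tau$, of either sign, makes that CR curvature vanish.
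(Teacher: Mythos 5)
Your overall strategy is the paper's: constancy of $A_1>0$ and $B_1$ forces, via (\ref{itg}), $a_{11}=0$, $B_1=i\tau$ and $A_1\tau=\pm\tfrac12-\tau^2$, after which (\ref{systek})--(\ref{systekk}) become constant-coefficient Maurer--Cartan equations that integrate to the stated $\lambda,\mu$. Your derivation of those constraints, the $\tau$-ranges from $A_1>0$, the inequivalence of the two sign families via the invariant sign of $k_1$, and the counting of solutions of $h=h(\tau)$ are all sound. Two steps, however, are not actually carried out and one of them, as described, would give the wrong answer.

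First, part (iii) is left unproved: you correctly note that CR-equivalence to the Heisenberg group is governed by Cartan's coarser CR invariant and that one must compute it for the explicit $\lambda,\mu$ and locate its zeros, but you stop at naming the task. Without that computation (or an appeal to the known formula for the lowest-order Cartan CR invariant of such rigid-type structures) the assertion that $\tau=\frac{\sqrt3}{2\sqrt2}$ with the upper sign is the \emph{only} CR-flat case is unestablished. Second, your identification of $h$ is asserted rather than derived, and the recipe you give (``comparing the adjoint action with the normal form in the Bianchi table through the eigenvalue ratio $-b$'') does not yield the stated formula. In the table's normal form for $VI_h$ the eigenvalues of $\mathrm{ad}_{X_3}$ on the abelian ideal are $h\pm1$, so matching eigenvalue ratios with your spectrum $\{b,-1\}$ gives $h=\pm\frac{b-1}{b+1}$, i.e. minus the \emph{absolute value} of $\frac{3\mp4\tau^2}{1\mp4\tau^2}$, whereas the statement has minus its \emph{square}. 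The stated formula is the scale-invariant $h=-\mathrm{tr}^2(\mathrm{ad}_{X_3})/\big(\mathrm{tr}^2(\mathrm{ad}_{X_3})-4\det(\mathrm{ad}_{X_3})\big)=-\frac{(b-1)^2}{(b+1)^2}$ (the Ellis--MacCallum normalization); you must say which normalization you are using and compute that invariant, since the two conventions agree only at $h=0$ and $h=-1$ and the one-versus-two tallies in (iv) depend on it. Relatedly, the degenerate value $\tau=\tfrac12$ (upper sign) is Bianchi type $IV$ (non-diagonalizable $\mathrm{ad}_{X_3}$, the logarithmic model of Proposition \ref{pr3s2}), not type $V$; note that there your closed form has $b=-1$, so $\der\lambda\dz\lambda=0$ and the twist degenerates, which is exactly the resonance at which the integration must be redone with a $\log y$ term.
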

%\begin{remark}
%We remark that in the above Proposition the
%limiting value $\tau\to 0$ corresponds to the flat model, i.e the
%Heisenberg group with the standard splitting. In the lower sign
%case the limiting values $\tau\to\pm\frac{1}{\sqrt{2}}$ correspond
%to the Poincar\'e disc structure; the limiting value $\tau\to\frac12$
%is, in a certain sense, covered by Proposition \ref{pr3s2}.
%\end{remark}
\begin{proposition}\label{pr3s2}
Modulo local equivalence there exists only \emph{one} structure $(M,[\lambda,\mu])$ of an oriented
congruence having vanishing shear, nonvanishing twist, $K_1\neq 0$, $K_2\equiv 0$, and
\emph{possessing a strictly 3-dimensional transitive group of symmetries of Bianchi type} $IV$. Locally it
may be represented by the forms
$$\lambda=y^{-1}(\der u+\log y\der x),\quad\quad\mu=y^{-1}(\der
x+i\der y).$$
Here $(u,z,\bar{z})$ with $z=x+i y$ are coordinates on $M$. The structure has
the basic local invariants  $A_1=\frac12$ and $B_1=\frac{i}{2}$.
\end{proposition}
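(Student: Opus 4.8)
The plan is to reduce the classification of type-$IV$ homogeneous models to a Lie-algebraic computation on the reduced bundle $N_0$, and then to exhibit an explicit coordinate model. First I would invoke the analysis immediately preceding the two propositions: a structure with $K_1\neq0$, $K_2\equiv0$ carries a strictly $3$-dimensional transitive symmetry group exactly when its basic invariants $A_1>0$ and $B_1$ are \emph{constant}, and substituting constants into the integrability equations (\ref{itg}) forces $B_1=i\tau$ together with $A_1=\tfrac{\pm1-2\tau^2}{2\tau}$ for a real parameter $\tau\neq0$. So everything comes down to deciding, for each admissible pair (sign, $\tau$), the Bianchi type of the resulting algebra.

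Second, I would read that algebra off the structure equations. With $A_1,B_1$ constant the system (\ref{systek}) has constant coefficients, hence $M$ is locally a Lie group $G$ with left-invariant coframe $\{\om,\om_1,\bar\om_1\}$, and the symmetry algebra is anti-isomorphic to $\mathrm{Lie}(G)$, so of the same Bianchi type. Passing to the real frame $(X,X_1,X_2)$ dual to $(\om,\tfrac12(\om_1+\bar\om_1),\tfrac{1}{2i}(\om_1-\bar\om_1))$ and writing $c=A_1+\tau=\tfrac{\pm1}{2\tau}$, the equations (\ref{systek}) give $[X,X_2]=0$, $[X_1,X]=-4A_1X$ and $[X_1,X_2]=-2X-2cX_2$. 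Since $A_1\neq0$ and $c\neq0$, the derived ideal $\mathfrak n=\Span\{X,X_2\}$ is two-dimensional and abelian, and $\operatorname{ad}_{X_1}|_{\mathfrak n}$ is the upper-triangular matrix with diagonal $(-4A_1,-2c)$ and nonzero off-diagonal entry $-2$.

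Third, I would match the conjugacy class of $\operatorname{ad}_{X_1}|_{\mathfrak n}$ to the Bianchi list. Its diagonal entries are real, so the eigenvalues $-4A_1,-2c$ are always real; because the off-diagonal entry never vanishes the matrix is never scalar, so within this family distinct eigenvalues give type $VI_h$ (as in Proposition \ref{pr3s1}) while coinciding eigenvalues give a genuine Jordan block, i.e. type $IV$ (never type $V$). The coincidence condition $-4A_1=-2c$, i.e. $2A_1=c$, reduces via $A_1=\tfrac{\pm1-2\tau^2}{2\tau}$, $c=\tfrac{\pm1}{2\tau}$ to $2(\pm1-2\tau^2)=\pm1$, which has a real solution only for the upper sign, namely $\tau^2=\tfrac14$; the constraint $A_1>0$ then selects $\tau=\tfrac12$, giving $A_1=\tfrac12$ and $B_1=\tfrac{i}{2}$. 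This is precisely the degeneration of the $VI_h$ family at $1\mp4\tau^2=0$, where the formula $h=-\big(\tfrac{3\mp4\tau^2}{1\mp4\tau^2}\big)^2$ blows up. Uniqueness of the structure then follows from the classifying theorem, since in the homogeneous case the constant pair $(A_1,B_1)$ determines $(M,[\lambda,\mu])$ up to local equivalence.

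Finally, to produce the stated model I would integrate (\ref{systek}) with these constants, or more economically verify the candidate forms $\lambda=y^{-1}(\der u+\log y\,\der x)$, $\mu=y^{-1}(\der x+i\der y)$ directly: computing $\der\lambda$ and $\der\mu$ in the basis $(\mu\dz\bar\mu,\mu\dz\lambda,\bar\mu\dz\lambda)$ yields $a=-\tfrac12$, $b=\tfrac{i}{2}$, $p=\tfrac{i}{2}$, $q=s=0$, so the shear vanishes, the twist $a\neq0$, $k_2=0$ and $k_1=\tfrac14\neq0$; carrying out the reduction of (\ref{systek}) on this representative returns $A_1=\tfrac12$, $B_1=\tfrac{i}{2}$, and the three evident symmetries close into a Bianchi $IV$ algebra. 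I expect the main obstacle to be the third step: correctly isolating the Jordan (rather than diagonalizable) degeneration so as to separate type $IV$ from type $V$ and from the generic $VI_h$, while tracking the sign conventions of (\ref{systen}) through the eigenvalue condition. The explicit verification in the last step is then routine.
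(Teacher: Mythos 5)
Your argument is correct and follows essentially the route the paper leaves implicit: with constant invariants the structure equations (\ref{systek}) define a three-dimensional Lie algebra with two-dimensional abelian derived ideal, and the Bianchi type is read off from the Jordan form of $\operatorname{ad}_{X_1}$ on that ideal --- your eigenvalues $-4A_1$, $-2c$ reproduce exactly the paper's formula $h=-\big(\tfrac{3\mp4\tau^2}{1\mp4\tau^2}\big)^2$, whose pole at $1\mp4\tau^2=0$ is precisely your Jordan-block degeneration $\tau=\tfrac12$, $A_1=\tfrac12$, $B_1=\tfrac{i}{2}$. The closing verification of the explicit model ($a=-\tfrac12$, $b=p=\tfrac{i}{2}$, $q=s=0$, hence $k_1=\tfrac14$, $k_2=0$) checks out as well.
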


Summarizing we have the following theorem.
\begin{theorem}\label{33s}
All locally nonequivalent structures $(M,[\lambda,\mu])$ of oriented
congruences having vanishing shear, nonvanishing twist, $K_1\neq 0$, $K_2\equiv 0$, and
\emph{possessing a strictly 3-dimensional transitive group of
symmetries} are locally equivalent to one of the structures defined in
Propositions \ref{pr3s1} and \ref{pr3s2}.
\end{theorem}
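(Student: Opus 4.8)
The plan is to treat this theorem as the consolidation of Propositions \ref{pr3s1} and \ref{pr3s2}: the substantive step is to show that the hypothesis forces the basic scalar invariants $A_1$ and $B_1$ to be \emph{constant}, after which the constancy, substituted back into the integrability conditions (\ref{itg}), isolates precisely the one-parameter families already classified in those two propositions. We work throughout in the branch $K_1\neq 0$, $K_2\equiv 0$. Since the sub-case $A_1\equiv 0$ produces a \emph{4}-dimensional transitive symmetry group by Theorem \ref{flt}, the assumption of a \emph{strictly} 3-dimensional transitive symmetry group places us in the regime $A_1\neq 0$, where, by the preceding analysis, the complete system of scalar invariants consists of $A_1>0$ together with the complex function $B_1$, governed by the structural system (\ref{systek})--(\ref{systekk}) and its integrability conditions (\ref{itg}).

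First I would establish local homogeneity. As $M$ is 3-dimensional and the symmetry group acts transitively with 3-dimensional orbits, its isotropy is discrete; hence every invariantly defined scalar on $M$ is constant along the orbit and therefore locally constant. In particular both $A_1$ and $B_1$ are constant.

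Next I would feed this constancy into (\ref{itg}). Imposing $\der A_1=0$ forces the two bracketed coefficients there to vanish; their sum and difference, together with the reality of $a_{11}$, give $a_{11}=0$ and $\mathrm{Re}(B_1)=0$, so that $B_1=i\tau$ for some real $\tau$. Imposing $\der B_1=0$ then forces $B_{11}=0$ and, after substituting $B_1=i\tau$ and separating the bracket into its real and imaginary parts, yields $b_{12}=0$ together with the single relation
\[
A_1=\frac{\pm 1-2\tau^2}{2\tau},\qquad \tau\neq 0 .
\]
The requirement $A_1>0$ then carves out exactly the admissible ranges of $\tau$ listed in Proposition \ref{pr3s1}.

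With $(\tau,\pm)$ so constrained the system (\ref{systek})--(\ref{systekk}) is fully determined, and I would integrate it to obtain explicit representatives $(\lambda,\mu)$ in adapted coordinates and then compute the commutators of the symmetry vector fields to identify the Bianchi type. Generic admissible values of $\tau$ give Bianchi type $VI_h$, which is exactly Proposition \ref{pr3s1}, whereas the single degenerate value $\tau=\tfrac12$ with the upper sign gives Bianchi type $IV$, which is Proposition \ref{pr3s2}; quoting these two propositions yields the asserted local equivalence. The hard part is the final bookkeeping: one must check that these two families are exhaustive and non-overlapping, which amounts to pinning down the \emph{exact} value of $\tau$ at which the Lie-algebra structure constants degenerate from type $VI_h$ to type $IV$, and verifying that the parameter ranges of Proposition \ref{pr3s1}, with that one value excised, account for every admissible pair $(\tau,\pm)$ without omission or repetition.
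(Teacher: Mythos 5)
Your proposal is correct and follows essentially the same route as the paper: transitivity of the strictly 3-dimensional symmetry group forces the basic scalar invariants $A_1>0$ and $B_1$ to be constant, and substituting constancy into (\ref{itg}) yields $a_{11}=b_{12}=0$, $B_1=i\tau$ and $A_1=\frac{\pm 1-2\tau^2}{2\tau}>0$ with $\tau\neq 0$, after which the explicit integration and Bianchi-type identification are exactly the content of Propositions \ref{pr3s1} and \ref{pr3s2}. Your identification of the degenerate upper-sign value $\tau=\tfrac12$ (where $A_1=\tfrac12$, $B_1=\tfrac{i}{2}$ and $h$ blows up) as the type $IV$ case of Proposition \ref{pr3s2} matches the paper.
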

\begin{remark}
Example \ref{hypc}, Theorem \ref{flt} and Theorem \ref{33s} describe
{\it all} locally nonequivalent {\it homogeneous} structures of an oriented congruence
having vanishing shear, nonvanishing twist and with the invariant $K_2\equiv
0$. They may have
\begin{itemize}
\item[-] maximal symmetry group of dimension 5, and then
they are locally isomorphic to the Heisenberg group with the
standard splitting.
\item[-] symmetry group of exact dimension 4, and then they are locally
isomorphic to one of the two nonequivalent structures of Theorem \ref{flt}.
\item[-] symmetry group of exact dimension 3 which must be of either Bianchi type $VI_h$ or $IV$; in this case
they are given by Propositions \ref{pr3s1} and \ref{pr3s2}.
\end{itemize}
\end{remark}
\subsection{The case $K_2\neq 0$}\label{k2niez}
Looking at the explicit expresion for $K_2$ in (\ref{k22}) we see that
in this case we may fix both $\rho$ and $\phi$ by the requirement that
\be
K_2=1.\label{k21}
\ee
Indeed this normalization forces $\rho$ and $\phi$ to be
$$\rho=|k_2|^{\tfrac13},\quad\quad \phi={\rm Arg}(k_2).$$
This provides an embedding of $M$ into
$P$. Using it (technically speaking, 
by inserting $\rho$ and $\phi$ in the definitions of the
invariant coframe (\ref{n77})) we pullback the forms
$(\om_1,\bar{\om}_1,\om,\Omega,\bar{\Omega})$ on $P$ to $M$. Also $K_1$
is pullbacked to $M$, so that
$$K_1=\frac{k_1}{|k_2|^{\tfrac23}}.$$
Since $M$
is 3-dimensional the pullbacked forms are no longer linearly
independent, 
and the pulback of the derived form $\Omega$ decomposes onto the invariant coframe
$(\om_1,\bar{\om}_1,\om)$ on $M$. We denote the coefficients of this 
decomposition by $(Z_1,Z_2,Z_0)$ so that:
\begin{eqnarray*}
&&\Omega=Z_1\om_1+Z_2\bar{\om}_1+Z_0\om\\
&&\bar{\Omega}=\bar{Z}_2\om_1+\bar{Z}_1\bar{\om}_1+\bar{Z}_0\om.
\end{eqnarray*}
These coefficients constitute the basic scalar invariants of the
structures under consideration. They satisfy the following differential
system:
\begin{eqnarray}
&&\der\om=i\om_1\dz\bar{\om}_1+(Z_1+\bar{Z}_2)\om_1\dz\om+(Z_2+\bar{Z}_1)\bar{\om}_1\dz\om\nonumber\\
&&\der\om_1=-Z_2\om_1\dz\bar{\om}_1-Z_0\om_1\dz\om\label{szy}\\
&&\der\bar{\om}_1=\bar{Z}_2\om_1\dz\bar{\om}_1-\bar{Z}_0\bar{\om}_1\dz\om\nonumber
\end{eqnarray}
with
\begin{eqnarray}
&&\der[Z_1\om_1+Z_2\bar{\om}_1+Z_0\om]=K_1\om_1\dz\bar{\om}_1+\om_1\dz\om\nonumber\\
&&\der[\bar{Z}_2\om_1+\bar{Z}_1\bar{\om}_1+\bar{Z}_0\om]=-K_1\om_1\dz\bar{\om}_1+\bar{\om}_1\dz\om.\nonumber
\end{eqnarray}
Instead of considering the last two equations above, it is
convenient to replace them by the integrability conditions for the
system (\ref{szy}). These are:
\begin{eqnarray}
\der Z_1=Z_{11}\om_1+(-K_1+i
Z_0-Z_1Z_2+Z_2\bar{Z}_2+Z_{21})\bar{\om}_1+(Z_0\bar{Z}_2+Z_{01}-1)\om\nonumber\\
\der \bar{Z}_1=(-K_1-i
\bar{Z}_0-\bar{Z}_1\bar{Z}_2+Z_2\bar{Z}_2+\bar{Z}_{21})\om_1+\bar{Z}_{11}\bar{\om}_1+(\bar{Z}_0Z_2+\bar{Z}_{01}-1)\om\nonumber\\
\der
Z_2=Z_{21}\om_1+Z_{22}\bar{\om}_1+(Z_{02}+Z_0\bar{Z}_1+Z_0Z_2-\bar{Z}_0Z_2)\om\nonumber\\\der
\bar{Z}_2=\bar{Z}_{22}\om_1+\bar{Z}_{21}\bar{\om}_1+(\bar{Z}_{02}+\bar{Z}_0Z_1+\bar{Z}_0\bar{Z}_2-Z_0\bar{Z}_2)\om\label{szy1}\\
\der Z_0=Z_{01}\om_1+Z_{02}\bar{\om}_1+Z_{00}\om\nonumber\\
\der
\bar{Z}_0=\bar{Z}_{02}\om_1+\bar{Z}_{01}\bar{\om}_1+\bar{Z}_{00}\om\nonumber\\
\der K_1=K_{11}\om_1+\bar{K}_{11}\bar{\om}_1+K_{10}\om,\nonumber
\end{eqnarray}
where, in addition to the basic scalar invariants $Z_0,Z_1,Z_2,K_1$,
we have 
introduced the scalar invariants of the next higher order:
$Z_{00},Z_{01},Z_{02},Z_{11},Z_{21},Z_{22}$ (complex) and
  $K_{10}$ (real). Note that if the basic scalar invariants
$Z_0,Z_1,Z_2,K_1$ were constants, all the higher order invariants such
as $Z_{00},Z_{01},Z_{02},Z_{11},Z_{21},Z_{22}, K_{10}$ would be
identically vanishing.   
   
\begin{theorem}\label{337}
All locally nonequivalent structures $(M,[\lambda,\mu])$ of oriented
congruences having vanishing shear, nonvanishing twist, and with
$K_2\neq 0$ are described by the invariant system (\ref{szy}) with the
integrabilty conditions (\ref{szy1}).
\end{theorem}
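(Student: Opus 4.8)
The strategy is to carry Cartan's reduction one final step, using the nonvanishing relative invariant $K_2$ to eliminate the remaining group freedom entirely. Since $K_2\neq 0$ everywhere, the normalization (\ref{k21}), namely $K_2=1$, can be solved uniquely and smoothly for the fibre coordinates, giving $\rho=|k_2|^{1/3}$ and $\phi={\rm Arg}(k_2)$. Substituting these into the invariant coframe (\ref{n77}) produces a canonical section $M\hookrightarrow P$; equivalently it reduces the Cartan bundle $H_2\to P\to M$ to an absolute parallelism (an $\{e\}$-structure) on the base $M$ itself. The first thing I would check carefully is exactly this point: that the normalization is genuinely admissible (that $k_2$ is nowhere zero, so $\rho$ and $\phi$ are well defined and smooth), and that after pullback the triple $(\om,\om_1,\bar{\om}_1)$ remains a coframe on the three-dimensional $M$, so that no further normalization is available.

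Once the section is fixed, I would pull back all five forms $(\om,\om_1,\bar{\om}_1,\Om,\bar{\Om})$ to $M$. Because $\dim M=3$, the two forms $\Om,\bar{\Om}$ can no longer be independent of the coframe; expanding them gives
\[
\Om=Z_1\om_1+Z_2\bar{\om}_1+Z_0\om,\qquad \bar{\Om}=\bar{Z}_2\om_1+\bar{Z}_1\bar{\om}_1+\bar{Z}_0\om,
\]
and the coefficients $(Z_0,Z_1,Z_2)$ are scalar invariants precisely because each of the forms is invariantly defined. Feeding this expansion into the structural equations (\ref{syste}) and using $K_2=1$ collapses them to the system (\ref{szy}), while the pullback of $K_1$ becomes the scalar $k_1/|k_2|^{2/3}$. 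The integrability conditions (\ref{szy1}) then arise by applying $\der$ to (\ref{szy}) and imposing $\der^2=0$: this both determines the directional derivatives of $Z_0,Z_1,Z_2,K_1$ and forces the introduction of the next-order invariants $Z_{00},\dots,Z_{22},K_{10}$. This exterior-differentiation step is the lengthy (Mathematica-assisted) computation, but it is routine in principle.

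The final and conceptually essential step is to argue that (\ref{szy})-(\ref{szy1}) constitute a \emph{complete} description up to equivalence. Here I would invoke the fundamental theorem of Cartan's method for $\{e\}$-structures: since the construction is canonical, two structures $(M,[\lambda,\mu])$ and $(M',[\lambda',\mu'])$ with $K_2,K_2'\neq 0$ are locally equivalent in the sense of Definition \ref{d3} if and only if there is a local diffeomorphism carrying the canonical coframe $(\om,\om_1,\bar{\om}_1)$ to $(\om',\om_1',\bar{\om}_1')$, and this holds if and only if all the scalar invariants $Z_0,Z_1,Z_2,K_1$ together with their iterated derivatives along the invariant frame can be simultaneously matched. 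Because (\ref{szy1}) expresses every first derivative of the basic invariants in terms of themselves and the finitely many next-order invariants, the system recursively encodes this entire invariant data, which is exactly the assertion of the theorem.

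I expect the main obstacle to be not the final appeal to the fundamental theorem, but the verification that the reduction closes consistently: one must confirm that imposing $K_2=1$ introduces no hidden torsion obstructing the decomposition, that the $\der^2=0$ identities applied to (\ref{szy}) are mutually compatible, and that the resulting closed system (\ref{szy1}) is self-consistent, so that no further normalization can reduce the dimension below three. This consistency is what guarantees that $(Z_0,Z_1,Z_2,K_1)$ really are the \emph{basic} invariants of the $K_2\neq 0$ branch.
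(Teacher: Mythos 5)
Your proposal follows essentially the same route as the paper: normalize $K_2=1$ to fix $\rho=|k_2|^{1/3}$ and $\phi=\mathrm{Arg}(k_2)$, embed $M$ into $P$, expand the pulled-back $\Omega$ on the coframe to obtain the scalar invariants $Z_0,Z_1,Z_2$ together with $K_1=k_1/|k_2|^{2/3}$, and close the system via $\der^2=0$ to get (\ref{szy})--(\ref{szy1}). The only difference is that you make explicit the final appeal to the equivalence theorem for $\{e\}$-structures, which the paper leaves implicit; this is a correct and harmless elaboration, not a different argument.
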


Now we pass to the determination of all nonequivalent structures with
$K_2\neq 0$ which have a strictly 3-dimensional transitive group of
symmetries. They correspond to the structures of Theorem \ref{337}
with all the scalar invariants being constants. It turns out that
there are two families of such structures. The first family is
described by the following invariant system:
\begin{eqnarray*}
\der\om_1={\rm
  e}^{i\al}[-(2\sin\al)^{-1/3}\om_1\dz\bar{\om}_1-(2\sin\al)^{1/3}\om_1\dz\om],\\
\der\bar{\om}_1={\rm
  e}^{-i\al}[(2\sin\al)^{-1/3}\om_1\dz\bar{\om}_1-(2\sin\al)^{1/3}\bar{\om}_1\dz\om],\\
\der \om=i\om_1\dz\bar{\om}_1+(2\sin\al)^{-1/3}({\rm
  e}^{i\al}\om_1\dz\om+{\rm e}^{-i\al}\bar{\om}_1\dz\om).
\end{eqnarray*}
All the nonvanishing scalar invariants here are:
$$K_1=(2\sin\al)^{-2/3}$$
and
$$ Z_1=i(2\sin\al)^{2/3},\quad Z_2={\rm
  e}^{i\al}(2\sin\al)^{-1/3},\quad Z_0={\rm
  e}^{i\al}(2\sin\al)^{1/3}.$$
%\edz{Here the Bianchi type and the local representation is missing}
Two different values $\al$ and $\al'$ of the parameter yield 
different respective 
  quadruples $(K_1,Z_0,Z_1,Z_2)$ and $(K_1',Z_0',Z_1',Z_2')$, and
  hence correspond to nonequivalent structures.\\

The second family of nonequivalent structures with a strictly
3-dimensional group of symmetries corresponds to the following
invariant system:
\begin{eqnarray}
&&\der \om=i\om_1\dz\bar{\om}_1+i\beta^{-1} \om\dz (\om_1-\bar{\om}_1)\nonumber\\
&&\der \om_1=-i(\beta\om+\beta^{-1}\bar{\om}_1)\dz \om_1\label{b89}\\
&&\der \bar{\om}_1=i(\beta\om+\beta^{-1}\om_1)\dz \bar{\om}_1.\nonumber
\end{eqnarray}
The nonvanishing scalar invariants here are:
\be
K_1=(\beta^3+3)\beta^{-2},\quad Z_1=-2i \beta^{-1},\quad Z_2=-i
\beta^{-1},\quad Z_0=-i\beta.
\label{ok1}\ee
The corresponding structures of an oriented congruence 
are parametrized by a real parameter $\beta\neq 0$. This means that
each $\beta\neq 0$ defines a distinct structure. 

A further
analysis of this system shows that the congruence structures described
by it 
have a transitive symmetry group of Bianchi type $VII_0$ (iff
$\beta=-2^{\frac{1}{3}}$), Bianchi type $VIII$ (iff $\beta>-2^{\frac{1}{3}}$),
and of Bianchi type $IX$ (iff $\beta<-2^{\frac{1}{3}}$).

If we parametrize the 3-dimensional manifold $M$ by
$(u,z,\bar{z})$, the structures $(M,\lambda,\mu)$ corresponding to the system (\ref{b89}) 
may be locally represented by:
\begin{eqnarray}
&&\la=\der u +\frac{2\beta{\rm
    e}^{-i\beta u}+i\bar{z}}{\beta(z\bar{z}-2\beta^2(2+\beta^3))}\der
z+\frac{2\beta{\rm
    e}^{i\beta u}-iz}{\beta(z\bar{z}-2\beta^2(2+\beta^3))}\der
\bar{z}\label{cast}\\&&
\mu=-\frac{2\beta^2{\rm e}^{-i\beta u}}{z\bar{z}-2\beta^2(2+\beta^3)}\der
z,\quad\quad 
\bar{\mu}=-\frac{2\beta^2{\rm e}^{i\beta u}}{z\bar{z}-2\beta^2(2+\beta^3)}\der\bar{z}.\nonumber
\end{eqnarray}

Note that the above $(\la,\mu)$ can be also used to define a \emph{CR structure}
on $M$. Despite the fact that the 3-dimensional CR structures are associated
with this $(\la,\mu)$ by fairly more general transformations,
$(\la,\mu)\to(f\la,h\mu+p\la)$, than the \emph{oriented congruence  
structures}, which are defined by the \emph{restricted}
$(\la,\mu)\to(f\la,h\mu)$ transformations, each $s\neq 0$ in
(\ref{cast}) defines also a \emph{distinct} 
CR structure in the sense of Cartan.

Three particular values of $\beta\neq 0$ in (\ref{cast}) are worthy of
mention. These are:
$$\beta=\beta_B=-2^{\frac{1}{3}},$$ 
when the local symmetry
  group (both the CR and the oriented congruence symmetry) changes the
  structure from Bianchi type $IX$, with $\beta<\beta_B$; through Bianchi type
  $VII_0$, with $\beta=\beta_B$; to Bianchi type $VIII$, with $\beta>\beta_B$. 

Next is: 
$$\beta=\beta_H=-1,$$ 
when the lowest order \emph{Cartan} invariant of the
  CR structure associated with $\la_{\beta_H}$ and $\mu_{\beta_H}$ is identically
  vanishing \cite{tafel}; in this case the CR structure becomes locally equivalent
  to the Heisenberg group CR structure, and the 3-dimensional
  transitive CR
  symmetry group of
Bianchi type $IX$ is extendable, from the local $\sog(3)$ group, to the
8-dimensional local CR symmetry group $\sug(2,1)$.

The third distinguished $\beta$ is: 
$$\beta=\beta_{K}=-3^{\frac{1}{3}}.$$ 
Note that for $\beta=\beta_K$ \emph{our} invariant $K_1$ of the
congruence structure $(\la_\beta,\mu_\beta)$ vanishes, $K_1\equiv 0$, as in
(\ref{ok1}).  This case is of some importance, since it will be shown in Section \ref{bbaa} that the
congruence structures with $K_1\equiv 0$ and $K_2\neq 0$ have very nice
properties.

\section{Vanishing twist and nonvanishing shear} \label{vtns} Now we assume the
opposite of Section \ref{ntvs}, namely that $(M,[\lambda,\mu])$
has some shear, $s\neq 0$, but has identically vanishing twist,
$a\equiv 0$. As in Section \ref{vts} the no twist condition
$\der\lambda\dz\lambda\equiv 0$ yields $\lambda=f\der t$ for
some real function $t$ on $M$. Thus in this case we again have a
foliation of $M$ by the level surfaces $t=const$. Each leaf 
$\mathcal C$ of
this foliation is a 2-dimensional real submanifold which is equipped with
a complex structure $J$ determined by the requirement that its
holomorphic vector bundle $H^{1,0}=\{X-i JX, X\in \Gamma(T{\mathcal
  C})\}$ coincides with the anihilator of $Span_\bbC(\lambda)\oplus
Span_\bbC(\bar{\mu})$. But the simple situation of $M$ being locally
equivalent to $\bbR\times\bbC$ is no longer true. If $s\neq 0$ the
manifold $M$ gets equipped with the structure of a fibre
bundle ${\mathcal C}\to M\to V$, with fibres ${\mathcal C}$ being
1-dimensional complex manifolds -- the leaves of the foliation given
by $t=const$, and with the base $V$ being 1-dimensional, and
parametrized by $t$. This can be rephrased by saying that
we have a 1-parameter family of complex curves ${\mathcal C}(t)$,
with complex structure tensors $J_{{\mathcal C}(t)}$, which are {\it
  not} 
invariant under Lie
transport along the vector field $\partial_t$. Recall that
having a complex structure in a real 2-dimensional vector space is
equivalent to having a conformal metric and an orientation in the
space. Thus the condition of having $s\neq 0$ means that, under Lie transport along
$\partial_t$, the metrics on
the 2-planes tangent to the surfaces $t=const$ change in a fashion more
general than conformal. This means that small circles on these
two planes do not go to small circles when Lie transported along
$\partial_t$. They may, for example, be distorted into small ellipses, which
intuitively means that the congruence generated by $\partial_t$ has
shear. This explains the name of the complex parameter $s$, as was
promised in Section \ref{sec5}.

We now pass to a more explicit description of this situation. We start
with an arbitrary structure $(M,[\lambda,\mu])$ with
$\der\lambda\dz\lambda=0$. This guarantees that the 2-dimensional
distribution anihilating $\lambda$ defines a foliation in $M$, and $M$
is additionally
equipped with a transversal congruence of curves. Note that a
foliation of a 3-space by 2-surfaces equipped with a congruence {\it locally} 
can either be described in terms of coordinates $(t,x,y)$ such that the tangent
vector to the congruence is $\partial_t$ (in such case the surfaces
are in general curved
for each value of the parameter $t$), or in terms of coordinates
$(u,z,\bar{z})$ such that locally the surfaces are 2-planes (in such 
case the congruence is tangent to a vector field with a more
complicated representation 
$X=\partial_u+S\partial_z+\bar{S}\partial_{\bar{z}}$. Regardless of the descriptions the leaves of the foliation are given by the level
surfaces of the real parameters $t=const$ (in the first case, as in
the begining of this Section) or $u=const$ (as it will be used in this
Section from now on). Having this in mind and recalling the allowed
transformations (\ref{ec}) we conclude that our
$(M,[\lambda,\mu])$ with $\der\lambda\dz\lambda=0$ may be represented
by a pair of 1-forms  
$$
\lambda=\der u,\quad\quad\quad\mu=\der z +H\der\bar{z}+G\der u,
$$
where $H=H(u,z,\bar{z})$ and $G=G(u,z,\bar{z})$ are complex-valued
functions on $M$, with coordinates $(u,z,\bar{z})$, such that
$|H|<1$. 
The foliation has leaves tangent to the vector fields $\partial_z$,
$\partial_{\bar{z}}$. Each leaf is equipped with a complex
structure, which may be described by saying that its $T^{(1,0)}$ space
is spanned by the vector field 
\be
Z=\partial_z-\bar{H}\partial_{\bar{z}};
\ee
consequently the $T^{(0,1)}$ space is spanned by the complex conjugate
vector field
$$
\bar{Z}=\partial_{\bar{z}}-H\partial_z.
$$
The congruence on $M$ which gives the preferred splitting is tangent
to the real vector field
\be
X=\partial_u+\tfrac{\bar{G}H-G}{1-H\bar{H}}\partial_z+\tfrac{G\bar{H}-\bar{G}}{1-H\bar{H}}\partial_{\bar{z}}.
\ee
Thus we have the following proposition.
\begin{proposition}\label{hg}
All structures $(M,[\lambda,\mu])$ with vanishing twist,
$a\equiv 0$, may be locally represented by 
\be
\lambda=\der u,\quad\quad\quad\mu=\der z +H\der\bar{z}+G\der
u,\label{aneze}
\ee
where $H=H(u,z,\bar{z})$ and $G=G(u,z,\bar{z})$ are complex-valued
functions on $M$, with coordinates $(u,z,\bar{z})$, such that
$|H|<1$. They have
nonvanishing shear $s\neq 0$ iff
$$H_u-GH_z+HG_z-G_{\bar{z}}\neq 0.$$
\end{proposition}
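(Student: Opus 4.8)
The plan is to prove the two assertions of the proposition separately: first the normal form (\ref{aneze}), and then the shear criterion.

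For the representation I would start from the vanishing twist hypothesis $a\equiv 0$, which by (\ref{twistshear}) means $\der\lambda\dz\lambda\equiv 0$. The real Fr\"obenius theorem then gives $\lambda=f\der u$ for some real functions $f\neq 0$ and $u$, and the gauge freedom $\lambda\to f^{-1}\lambda$ permitted by (\ref{ec}) lets me take $\lambda=\der u$ outright. Next I would pick any complex function $z$ for which $(u,\mathrm{Re}\,z,\mathrm{Im}\,z)$ is a local coordinate system, so that $\der u\dz\der z\dz\der\bar z\neq 0$, and expand the chosen representative as $\mu=P\der z+Q\der\bar z+R\der u$ with complex coefficients $P,Q,R$. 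Wedging with $\lambda=\der u$, the condition $\lambda\dz\mu\dz\bar\mu\neq 0$ forces the coefficient of $\der u\dz\der z\dz\der\bar z$ in $\der u\dz\mu\dz\bar\mu$, namely $|P|^2-|Q|^2$, to be nonzero; hence $|P|\neq|Q|$. Finally I rescale by the gauge $\mu\to P^{-1}\mu$ (again allowed by (\ref{ec})) to normalize the $\der z$ coefficient to $1$, which yields $\mu=\der z+H\der\bar z+G\der u$ with $H=Q/P$ and $G=R/P$.

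The only subtlety in this first part is the strict inequality $|H|<1$, which is an orientation condition. Indeed the vector $\bar Z=\partial_{\bar z}-H\partial_z$ spans $\ker\mu\cap\ker\lambda$, and $Z\dz\bar Z=(1-|H|^2)\,\partial_z\dz\partial_{\bar z}$ for $Z=\partial_z-\bar H\partial_{\bar z}$, so $|H|<1$ is exactly the statement that the complex structure determined by $\mu$ induces the orientation of $H$ fixed by $J$. Since replacing $z$ by $\bar z$ interchanges $P\leftrightarrow Q$ and hence sends $|H|$ to $|H|^{-1}$, I can always arrange $|P|>|Q|$, i.e. $|H|<1$, by choosing the orientation of the transverse coordinate consistently with $J$. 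I expect this normalization step --- matching the chosen coordinate orientation to the given complex structure --- to be the one genuine point to get right; everything else is bookkeeping with the allowed transformations (\ref{ec}).

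For the shear criterion I would use the observation, already recorded after (\ref{str}) and in (\ref{twistshear}), that $s$ is the $\bar\mu\dz\lambda$ component of $\der\mu$ and that $s\equiv 0$ if and only if $\der\mu\dz\mu\equiv 0$; indeed substituting (\ref{str}) gives $\der\mu\dz\mu=s\,\bar\mu\dz\lambda\dz\mu$, a nonzero multiple of the volume form precisely when $s\neq 0$. It therefore suffices to compute $\der\mu\dz\mu$ for the normal form. With $\mu=\der z+H\der\bar z+G\der u$ one has $\der\mu=\der H\dz\der\bar z+\der G\dz\der u$, and a direct wedge computation collapses $\der\mu\dz\mu$ to a single multiple of $\der z\dz\der\bar z\dz\der u$, whose coefficient works out to $-(H_u-GH_z+HG_z-G_{\bar z})$. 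Hence $\der\mu\dz\mu\neq 0$ --- equivalently $s\neq 0$ --- if and only if $H_u-GH_z+HG_z-G_{\bar z}\neq 0$, as claimed. This last computation is routine, and since the vanishing of $s$ is a basic relative invariant (Section \ref{sec5}), the criterion is automatically independent of the residual freedom left in the normal form.
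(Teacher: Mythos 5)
Your proof is correct and follows essentially the same route as the paper: Fr\"obenius plus the gauge freedom (\ref{ec}) to reach $\lambda=\der u$, $\mu=\der z+H\der\bar z+G\der u$, the coordinate swap $z\to\bar z$ (sending $H\to 1/H$) to arrange $|H|<1$, and the identification of $s\neq 0$ with $\der\mu\dz\mu\neq 0$. Your explicit wedge computation giving the coefficient $-(H_u-GH_z+HG_z-G_{\bar z})$ is exactly the verification the paper leaves implicit.
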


The following two cases are of particular interest:
\begin{itemize}
\item $H\equiv 0$. In this case all surfaces $u=const$ are equipped
  with the standard complex structure. The coordinate $z$ is the
  holomorphic coordinate for it, but the congruence is tangent to a
  complicated real vector field
  $X=\partial_u-G\partial_z-\bar{G}\partial_{\bar{z}}.$
\item $G\equiv 0$. Here each surface $u=const$ has its own complex
  structure $J$, for which $z$ is not a holomorphic coordinate; $J$
  is determined by specifying a complex function $H$. A nice feature
  of this case is that the congruence is now tangent to the very
  simple vector field $X=\partial_u$, which enables us to identify
  coordinates $t$ and $u$.
\end{itemize} 
Note that in Proposition \ref{hg} we made an assumption about the
modulus of the function $H$. The modulus equal to one is excluded
because it violates the condition that the forms $\lambda$, $\mu$,
$\bar{\mu}$ are independent. We excluded also the $H>1$ case, since 
because of the coordinate transformation $z\to\bar{z}$
followed by $H\to 1/H$, such structures are
in one to one equivalence with those having $|H|<1$. We now turn to
the question about nonequivalent structures among those covered by Proposition
\ref{hg}.
\subsection{The invariant $T_0$ and the relative
  invariants $T_1$, $K_0$, $K_1$}
To answer this we have to go back
to the begining of Section \ref{sec5} and again
perform the Cartan analysis on the system (\ref{str}), but now with 
$a\equiv 0$, $s\neq 0$. In this case the formulae (\ref{str}) become
\beq
\der\lambda&=&
b\mu\dz\lambda+\bar{b}\bar{\mu}\dz\lambda\nonumber\\
\der\mu&=&p\mu\dz\bar{\mu}+q\mu\dz\lambda+s\bar{\mu}\dz\lambda\label{st1}\\
\der\bar{\mu}&=&-\bar{p}\mu\dz\bar{\mu}+\bar{s}\mu\dz\lambda+\bar{q}\bar{\mu}\dz\lambda.\nonumber
\eeq
It is convenient to write the complex shear function $s$ as
$$s=|s|{\rm e}^{i\psi}.$$ Now for a chosen pair $(\lambda,\mu)$
representing the structure, using (\ref{st1}), we find that the
differentials of the Cartan frame  
\be
(\omega,\omega_1,\bar{\omega}_1)=(f\lambda,\rho {\rm e}^{i\phi}\mu,
\rho {\rm e}^{-i\phi}\bar{\mu})\label{cfra}\ee are:
\begin{eqnarray*}
\der\om&=&\der\log f\dz\om + \frac{b}{\rho}{\rm e}^{-i\phi}\om_1\dz\om + \frac{\bar{b}}{\rho}{\rm e}^{i\phi}\bar{\om}_1\dz\om\\
\der\om_1&=&i\der\phi\dz\om_1 + \der\log\rho\dz\om_1
+\frac{p}{\rho}{\rm e}^{i\phi}\om_1\dz\bar{\om}_1
+\frac{q}{f}\om_1\dz\om +  \frac{|s|}{f}{\rm e}^{i(2 \phi +
  \psi)}\bar{\om}_1\dz\om\\
\der\bar{\om}_1&=&-i\der\phi\dz\bar{\om}_1 + \der\log\rho\dz\bar{\om}_1 -\frac{\bar{p}}{\rho}{\rm e}^{-i\phi}\om_1\dz\bar{\om}_1 +  \frac{|s|}{f}{\rm e}^{-i(2 \phi + \psi)}\om_1\dz\om+\frac{\bar{q}}{f}\bar{\om}_1\dz\om. 
\end{eqnarray*}
Because of  $s\neq 0$, we can gauge the structure so that 
\be
\der\om_1\dz\om_1=\om_1\dz\bar{\om}_1\dz\om.\label{n01}
\ee
This requirement defines $f$ modulo sign to be $f=\pm |s|$.  Writing
$f$ as
$$f={\rm e}^{i\epsilon\pi}|s|,$$
where $\epsilon=0,1$, and still requiring the normalization
(\ref{n01}), we get
$$\phi=-\tfrac12\psi+\epsilon\tfrac{\pi}{2}.$$
Thus the functions $f$ and $\phi$ are fixed modulo $\epsilon$. 
 
After this normalization we introduce a {\it real} 1-form
$\Omega$ such that
\be
(\der\om_1-\der\bar{\om}_1)\dz(\om_1+\bar{\om}_1)=
2\Om\dz\om_1\dz\bar{\om}_1.\label{n02}
\ee
This equation defines $\Omega$ to be
$$
\Om=\der\log\rho+z\om_1+\bar{z}\bar{\om}_1+(1-{\rm e}^{i\epsilon\pi}\tfrac{q+\bar{q}}{2|s|})\om,
$$
where $z$ is an auxiliary complex parameter. The condition that fixes
$z$ in an algebraic fashion is:
\be
\der\om_1\dz\om=\Om\dz\om_1\dz\om,\quad\quad\quad\der\bar{\om}_1\dz\om=\Om\dz\bar{\om}_1\dz\om.\label{n03}
\ee
It uniquely specifies $z$ to be
$$z=\tfrac{(i\psi_\mu-2\bar{p})}{2\rho}{\rm
    e}^{\tfrac{i}{2}(\psi-\epsilon \pi)},\quad\quad\quad
    \bar{z}=\tfrac{(-i\psi_{\bar{\mu}}-2p)}{2\rho}{\rm
    e}^{-\tfrac{i}{2}(\psi-\epsilon\pi)}.$$
Thus given a structure $(M,[\lambda,\mu])$ with vanishing twist and
nonvanishing shear, the three normalization conditions (\ref{n01}),
(\ref{n02}), (\ref{n03}) uniquely specify a 4-dimensional
manifold $P$, which is locally $M\times\bbR_+$, and a well
defined coframe $(\omega,\omega_1,\bar{\omega}_1,\Omega)$
on it such that
\beq
\om&=&{\rm e}^{i\epsilon\pi}|s|\lambda\nonumber\\
\om_1&=&\rho {\rm e}^{-\tfrac{i}{2}(\psi-\epsilon\pi)}\mu\nonumber\\
\bar{\om}_1&=&\rho {\rm e}^{\tfrac{i}{2}(\psi-\epsilon\pi)}\bar{\mu}\label{n771}\\
\Om&=&\der\log\rho+\tfrac{(i\psi_\mu-2\bar{p})}{2\rho}{\rm
    e}^{\tfrac{i}{2}(\psi-\epsilon \pi)}\om_1+\tfrac{(-i\psi_{\bar{\mu}}-2p)}{2\rho}{\rm
    e}^{-\tfrac{i}{2}(\psi-\epsilon\pi)}\bar{\om}_1+\nonumber\\
&&(1-{\rm e}^{i\epsilon\pi}\tfrac{q+\bar{q}}{2|s|})\om.
\nonumber
\eeq
Here the positive coordinate along the factor $\bbR_+$ in the
fibration $\bbR_+\to P\to M$
is $\rho$. The coframe $(\omega,\omega_1,\bar{\omega}_1,\Omega)$
satisfies
\beq
\der\om&=&T_1\om_1\dz\om+\bar{T}_1\bar{\om}_1\dz\om\nonumber\\
\der\om_1&=&\Om\dz\om_1+(\om_1+\bar{\om}_1)\dz\om+iT_0\om_1\dz\om\nonumber\\
\der\bar{\om}_1&=&\Om\dz\bar{\om}_1+(\om_1+\bar{\om}_1)\dz\om-iT_0\bar{\om}_1\dz\om\label{systey}\\
\der\Omega&=&iK_0\om_1\dz\bar{\om}_1+K_1\om_1\dz\om+\bar{K}_1\bar{\om}_1\dz\om\nonumber
\eeq where
\be
T_0=\tfrac{\psi_\lambda+i(\bar{q}-q)}{2|s|}{\rm e}^{i\epsilon\pi},\quad\quad T_1=\frac{t_1}{\rho},\quad\quad K_0=\frac{k_0}{2\rho^2},\quad\quad
K_1=\frac{k_1}{2\rho}\label{to}
\ee
and
\begin{eqnarray}
t_1&=&(b|s|+|s|_\mu)\frac{e^{\tfrac{i}{2}(\psi-\epsilon\pi)}}{|s|}\nonumber\\
k_0&=&-\psi_{\mu\bar{\mu}}-\psi_{\bar{\mu}\mu}+p\psi_\mu+\bar{p}\psi_{\bar{\mu}}+2i(p_\mu-\bar{p}_{\bar{\mu}})\label{wwn}\\
k_1&=&2(t_1-\bar{t}_1)+\nonumber\\
&&{\rm e}^{\tfrac{i}{2}\epsilon\pi}[(b \bar{q}-b q -q_\mu+\bar{q}_\mu+i q
  \psi_\mu-i\psi_{\mu\lambda}){\rm e}^{\tfrac{i}{2}\psi}+
i\psi_{\bar{\mu}}|s|{\rm e}^{-\tfrac{i}{2}\psi}]|s|^{-1}.\nonumber
\end{eqnarray}
Note that functions $T_0$, $T_1$, $K_0$ and $K_1$ are invariants of
the structure on the {\it bundle} $\bbR_+\to P\to M$, with the fiber
coordinate $\rho$. They are defined modulo the parameter
$\epsilon=0,1$. Thus two structures which differ only by the
value of $\epsilon$ are equivalent. \\

If we want to look for the invariants on the {\it original} manifold
$M$ we must examine 
the fiber coordinate dependence of the structural functions 
$T_0$, $T_1$, $K_0$ and $K_1$. Since the last three functions $T_1$,
$K_0$, $K_1$ have a nontrivial $\rho$ dependence they do not project to
invariant functions on $M$. However, since in all these cases this
dependence is just 
{\it scaling} by $\rho$ we 
conclude that they lead to the {\it relative} invariants on $M$. Thus
the vanishing or not of any of the functions $t_1,k_1$ (complex), $k_0$
(real) is an invariant
property of the structure on $M$. The situation is quite different for the
real function $T_0$. Although originally defined on $P$ it is 
{\it constant} along the fibers. Thus it projects to a well defined
invariant on the original manifold $M$. Thus $T_0$ is an invariant of the
structure on $M$. We summarize the above discussion in the following Theorem.

\begin{theorem}\label{th324}
A given structure $(M,[\lambda,\mu])$ of an oriented congruence with
vanishing twist, $a\equiv 0$, and nonvanishing shear, $s\neq 0$,
uniquely defines a 4-dimensional manifold $P$, 1-forms
$\om,\om_1,\bar{\om}_1,\Om$
and functions  $T_0, K_0$ (real) $T_1,K_1$ (complex) on $P$
such that
\begin{itemize}
\item[-] $\om,\om_1,\bar{\om}_1,\Om$ are as in {\rm (\ref{n771})},
\item[-]
  $\om\dz\om_1\dz\bar{\om}_1\dz\Om\neq
  0$ at each point of $P$,
\item[-] the forms and functions $T_0,T_1,K_0,K_1$ 
are uniquely determined by the requirement that on $P$
they satisfy equations {\rm (\ref{systey})}.
\end{itemize}
In particular $T_0$ is an invariant of the structure on $M$; 
the identical vanishing, or not, of either of the
functions $t_1$, $k_0$ or $k_1$ defined in (\ref{wwn}) is an 
invariant condition on $M$.
\end{theorem}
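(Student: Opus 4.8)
The plan is to carry out the Cartan reduction whose intermediate steps already appear in the paragraphs preceding the statement, and then to read off the invariance assertions from the explicit $\rho$-dependence of the structural functions. First I would fix a representative pair $(\lambda,\mu)$ of the class, so that the structure equations take the reduced form (\ref{st1}) guaranteed by $a\equiv 0$, and introduce the most general admissible coframe (\ref{cfra}), namely $\om=f\lambda$ and $\om_1=\rho\,{\rm e}^{i\phi}\mu$, with $f$ real and nonzero, $\rho>0$, and $\phi$ real. Computing $\der\om,\der\om_1,\der\bar{\om}_1$ in this frame produces the coefficient expressions recorded just before (\ref{n01}). The three successive normalizations are then imposed in order: condition (\ref{n01}), which uses $s\neq 0$ to pin down $f=\pm|s|$ and $\phi=-\tfrac12\psi+\epsilon\tfrac{\pi}{2}$ up to the discrete choice $\epsilon\in\{0,1\}$; the definition (\ref{n02}) of the real 1-form $\Om$; and finally (\ref{n03}), which fixes the remaining auxiliary parameter $z$ algebraically. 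The outcome is exactly the coframe (\ref{n771}) on the bundle $\bbR_+\to P\to M$ with fiber coordinate $\rho$, so that $\dim P=4$.

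Second, I would establish the two structural bullets. Linear independence $\om\dz\om_1\dz\bar{\om}_1\dz\Om\neq 0$ follows because $(\om,\om_1,\bar{\om}_1)$ is manifestly a coframe on $M$, being a nonzero rescaling of $(\lambda,\mu,\bar\mu)$, while $\Om$ in (\ref{n771}) contains the term $\der\log\rho$ transverse to the fibers of $P\to M$; no cancellation is possible, since all the remaining terms of $\Om$ lie in the span of $\om,\om_1,\bar{\om}_1$ and therefore wedge to zero against $\om\dz\om_1\dz\bar{\om}_1$. To obtain (\ref{systey}) one differentiates the forms (\ref{n771}) once more and collects terms; the normalizations (\ref{n01})--(\ref{n03}) are precisely what forces the $\om_1\dz\bar{\om}_1$, $\om_1\dz\om$ and $\bar{\om}_1\dz\om$ coefficients to assemble into the displayed combinations, thereby defining $T_0,T_1,K_0,K_1$ through (\ref{to}) and (\ref{wwn}). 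Uniqueness is then automatic: once the gauge is fixed modulo $\epsilon$, the coframe is rigid and the coefficients on the right of (\ref{systey}) are determined, so any two such systems on $P$ coincide, while the $\epsilon$-ambiguity merely identifies structures that differ by a sign and hence are equivalent.

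For the last sentence of the theorem I would examine the fiber-coordinate dependence exhibited in (\ref{to}). The expressions there give $T_1=t_1/\rho$, $K_0=k_0/(2\rho^2)$ and $K_1=k_1/(2\rho)$, so each of these is a pure power of $\rho$ times a function pulled back from $M$; hence none descends to a function on $M$, but the identical vanishing or not of $t_1,k_0,k_1$ is well defined on $M$ and invariant, which is exactly the assertion that these are \emph{relative} invariants. By contrast $T_0$ carries no $\rho$ in (\ref{to}) and is therefore constant along the fibers, so it pushes down to a genuine scalar invariant on $M$.

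I expect the main obstacle to be the verification that the system (\ref{systey}) actually closes: that the three normalizations leave no residual freedom, and that differentiating (\ref{n771}) yields coefficients consistent with $\der^2=0$, so that $T_0,T_1,K_0,K_1$ are genuinely well-defined functions rather than over-determined. This is the step that, in the twisting case of Section \ref{ntvs}, rested on the commutation relations (\ref{com}); here the analogous consistency, together with the bookkeeping that collapses the a priori two fiber directions to the single $\rho$-direction because $\phi$ is consumed by (\ref{n01}), is where the real work lies. Once this closure is in hand the remaining assertions are formal.
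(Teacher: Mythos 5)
Your proposal is correct and follows essentially the same route as the paper: the theorem is there a summary of the Cartan reduction already carried out via the normalizations (\ref{n01})--(\ref{n03}) leading to the coframe (\ref{n771}) and the system (\ref{systey}), and the invariance statements are read off, exactly as you do, from the explicit $\rho$-scaling of $T_1,K_0,K_1$ in (\ref{to}) versus the $\rho$-independence of $T_0$. The closure issue you flag is handled in the paper simply by the explicit computation (checked symbolically), so no additional idea is needed there.
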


The structures covered by Theorem \ref{th324} admit symmetry groups of
{\it at most} four dimensions. Those for which the symmetry group is strictly
 4-dimensional have {\it all} the relative invariants $t_1$, $k_0$, $k_1$ 
equal to {\it zero} and {\it constant} invariant $T_0$. When
finding such structures it is enough to consider $T_0=\al= const\geq 0$
since, due to the fact that $T_0$ is defined modulo sign (${\rm
  e}^{i\epsilon\pi}=\pm 1$), each
 structure with $T_0=\al<0$ is equivalent to the one 
with $T_0=|\al|$. Inspecting all the possibilities we get the
 following theorem.
\begin{theorem}\label{4symtm}
All locally nonequivalent structures $(M,[\lambda,\mu])$ of oriented
congruences having vanishing twist, nonvanishing shear, and
\emph{possessing a strictly 4-dimensional transitive group} \emph{of
  symmetries} are parametrized by a real constant $\alpha\geq 0$ as
follows.
\begin{itemize}
\item if $0\leq\alpha<1$ they can be locally represented by
$$\lambda=\der u,\quad\quad \mu=\der x+{\rm
  e}^{2u\sqrt{1-\alpha^2}}(\alpha+i\sqrt{1-\alpha^2})\der y$$ 
\item if $\al=1$ they can be locally represented by 
$$\lambda=\der u,\quad\quad \mu=\der x+(i+2u)\der y$$ 
\item if $\al>1$ they can be locally represented by
\begin{eqnarray*}
\lambda&=&\der u,\\
\mu&=&[(i+\al)\cos(u\sqrt{\al^2-1})-i\sqrt{\al^2-1}\sin(u\sqrt{\al^2-1})]\der
x+\\
&&[(i+\al)\sin(u\sqrt{\al^2-1})+i\sqrt{\al^2-1}\cos(u\sqrt{\al^2-1})]\der
y.
\end{eqnarray*}
\end{itemize}
Here $(u,x,y)$ are coordinates on $M$.
The real parameter $\alpha\geq 0$ is just the invariant $T_0=\alpha$ 
and as such enumerates nonequivalent structures.
\end{theorem}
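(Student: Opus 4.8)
The plan is to turn the classification into the integration of a constant-coefficient exterior differential system and then to read off explicit normal forms. The starting point is the reduction already recorded in the paragraph following Theorem~\ref{th324}: since the canonical coframe $(\om,\om_1,\bar\om_1,\Om)$ on $P$ is uniquely pinned down by the normalizations (\ref{n01})--(\ref{n03}), every symmetry of $(M,[\la,\mu])$ lifts to a coframe-preserving diffeomorphism of the $4$-dimensional $P$. A coframe-preserving map with a fixed point is the identity, so a strictly $4$-dimensional symmetry group acts locally freely, hence locally simply transitively, on $P$. Therefore all structural functions are constant; those that scale with the fiber coordinate $\rho$, namely $T_1,K_0,K_1$, must then vanish identically, while $T_0=\al$ is a constant. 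Using the freedom ${\rm e}^{i\eps\pi}=\pm1$ in the definition of $T_0$ in (\ref{to}) I may assume $\al\ge 0$.

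Substituting $T_1=K_0=K_1\equiv 0$ and $T_0=\al$ into (\ref{systey}) collapses that system to the closed form $\der\om=0$, $\der\Om=0$, together with $\der\om_1=\Om\dz\om_1+(1+i\al)\om_1\dz\om+\bar\om_1\dz\om$ and its conjugate. I would note that $\der^2=0$ holds automatically here, being inherited from the integrability of the full Cartan system, so the constant coefficients genuinely define a $4$-dimensional Lie algebra; by the converse to the Maurer--Cartan equations the coframe is left-invariant on a local Lie group, and the structure is fixed up to local equivalence by the single number $\al$. The coupling of $\om_1$ and $\bar\om_1$ to $\om$ is governed by the constant matrix $\left(\begin{smallmatrix}1+i\al&1\\1&1-i\al\end{smallmatrix}\right)$, whose eigenvalues are $1\pm\sqrt{1-\al^2}$.

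To produce the explicit representatives I would integrate this system and descend along the fibration $\bbR_+\to P\to M$. From $\der\om=0$ one recovers $\la=\der u$, as forced by vanishing twist, and one seeks $\mu=\der x+c(u)\,\der y$ in adapted coordinates $(u,x,y)$, the leaf-direction homogeneity reducing the coefficient to a function of $u$ alone. Feeding this ansatz through (\ref{st1}) and the normalizations, the conditions $T_0=\al$, $t_1=k_0=k_1=0$ become a linear evolution in $u$ whose dynamics, once the overall scale encoded by $\Om$ (and absorbed into $\rho$) is removed, is governed by the \emph{trace-free} part of the above matrix, with eigenvalues $\pm\sqrt{1-\al^2}$. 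The trichotomy of the theorem is exactly the eigenvalue trichotomy for the sign of $1-\al^2$: for $0\le\al<1$ the eigenvalues are real and distinct, giving the exponential factor ${\rm e}^{2u\sqrt{1-\al^2}}$ (the exponent being the eigenvalue gap); for $\al=1$ the trace-free part is a nontrivial Jordan block, giving the affine factor $i+2u$; and for $\al>1$ they are the conjugate pair $\pm i\sqrt{\al^2-1}$, giving the combinations of $\cos(u\sqrt{\al^2-1})$ and $\sin(u\sqrt{\al^2-1})$. This reproduces precisely the three listed families.

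Finally, since $T_0$ is constant along the fibers and descends to a genuine scalar invariant on $M$ (Theorem~\ref{th324}), two models with different values of $\al\ge 0$ cannot be locally equivalent, so $\al=T_0$ enumerates the structures without redundancy. I expect the main obstacle to be not the conceptual reduction but the bookkeeping of the last step: verifying by direct computation that each of the three explicit families does have \emph{all} of $t_1,k_0,k_1$ vanishing with $T_0$ equal to the stated $\al$ (so that it indeed lies in this branch and realizes the claimed symmetry dimension), and checking that the eigenvalue trichotomy exhausts the solution space with no coordinate change identifying distinct values of $\al$.
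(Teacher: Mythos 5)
Your proposal is correct and follows essentially the same route as the paper: the paper likewise observes that a strictly $4$-dimensional transitive symmetry group forces all the relative invariants $t_1,k_0,k_1$ to vanish and $T_0=\al$ to be constant (with $\al\ge 0$ by the sign freedom in $\eps$), and then obtains the three normal forms by integrating the resulting constant-coefficient system, which in the paper appears as the ODE $h''+2h'+\al^2h=0$ whose characteristic roots $-1\pm\sqrt{1-\al^2}$ give exactly your eigenvalue trichotomy in $1-\al^2$. Your simply-transitive-action-on-$P$ argument and the matrix-eigenvalue bookkeeping are a faithful expansion of the paper's terse ``inspecting all the possibilities.''
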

\subsection{Description in terms of the Cartan connection}
Equations (\ref{systey}) can be better understood in terms of the
matrix $\tilde{\om}$ of 1-forms defined by
$$
\tilde{\om}=
\begin{pmatrix}
2(\Om-\om)&0&0\\
&&\\
\om_1&\Om-\om&\om\\
&&\\
\bar{\om}_1&\om&\Om-\om,
\end{pmatrix}
$$
where the 1-forms $(\om_1,\bar{\om}_1,\om,\Om)$ are as in
(\ref{systey}) or as is (\ref{n771}). 

This matrix has values in the 4-dimensional Lie algebra
$\mathfrak{g}_4$ which is a semidirect product of two 2-dimensional
Abelian Lie algebras 
$$\mathfrak{h}_0=\{~ \begin{pmatrix}
2x&0&0\\
&&\\
0&x&y\\
&&\\
0&y&x
\end{pmatrix}~|~ x,y\in\bbR~ \}
$$
and 
$$\mathfrak{h}_1=\{~ \begin{pmatrix}
0&0&0\\
&&\\
u+i v&0&0\\
&&\\
u-i v&0&0
\end{pmatrix}~|~ u,v\in\bbR~ \},
$$
for which the commutator is the usual commutator of $3\times 3$
matrices. Thus \be\mathfrak{g}_4=\mathfrak{h}_0\oplus\mathfrak{h}_1,\label{sptil}\ee
as the direct sum of vector spaces $\mathfrak{h}_0$ and $\mathfrak{h}_1$, with the commutator between
$\mathfrak{h}_0$ and $\mathfrak{h}_1$ given by 
$$[\mathfrak{h}_0,\mathfrak{h}_1]\subset\mathfrak{h}_1.$$

It turns out that due to the relations (\ref{systey}),  
$\tilde{\om}$ is a Cartan connection on the principal fibre bundle $\bbR_+\to
P\to M$, which has as its structure group a 1-parameter Lie group
generated by the vector field $\rho\partial_\rho$ dual to $\Om$.

\begin{remark}\label{bif}
It is worthwile to note that the fibre bundle $\bbR_+\to P\to M$ has
some additional structure. Indeed, equations (\ref{systey}) guarantee
that $P$ is foliated by 2-dimensional leaves of the \emph{integrable} 
2-dimensional real distribution $\mathcal{D}$ 
anihilating forms $\om_1$ and $\bar{\om}_1$. Thus, locally, 
$P$ has also the structure of 
a fibre bundle over the leaf space $P/{\mathcal D}$. This is actually
a \emph{principal} fiber bundle $H_0\to P\to P/{\mathcal D}$, with the  
structure group $H_0$ having $\mathfrak{h}_0$ as its Lie algebra.

\end{remark}

Equations (\ref{systey}) imply that the curvature $R$ of
the Cartan connection $\tilde{\om}$ is
$$
R=\der \tilde{\om}+\tilde{\om}\dz\tilde{\om}=
\begin{pmatrix}
2R_1&0&0\\
R_3&R_1&R_2\\
\bar{R}_3&R_2&R_1
\end{pmatrix},
$$
where
\beq
R_1&=&i K_0\om_1\dz\bar{\om}_1+(K_1-T_1)\om_1\dz\om+
(\bar{K}_1-\bar{T}_1)\bar{\om}_1\dz\om\nonumber\\
R_2&=&T_1\om_1\dz\om+\bar{T}_1\bar{\om}_1\dz\om\nonumber\\
R_3&=&i T_0\om_1\dz\om.\nonumber
\eeq
In particular the absence of vertical $\Omega\dz$ terms in the curvature
confirms our interpretation of $\tilde{\om}$ as a
$\mathfrak{g}_4$-valued Cartan connection on $P$ over $M$.

The Cartan connection $\tilde{\om}$ yields all the invariant information about the corresponding
structures $(M,[\lambda,\mu])$ and can be used in an invariant
description of various examples of such structures. In particular, the
invariant decomposition (\ref{sptil}) may be used to distinguish two
large classes $(M,[\la,\mu])_0$ and $(M,[\la,\mu])_1$ 
of nonequivalent structures $(M,[\la,\mu])$. These are defined by the
requirement that the curvature $R$ of their Cartan connection
$\tilde{\om}$ has values in the respective parts $\mathfrak{h}_0$ for
$(M,[\la,\mu])_0$, and  $\mathfrak{h}_1$ for $(M,[\la,\mu])_1$. 

\subsubsection{Curvature $R\in\mathfrak{h}_0$}\label{koso1} The curvature $R$ of the
Cartan connection $\tilde{\om}$ resides in $\mathfrak{h}_0$ iff it is
of the form
$$
R=
\begin{pmatrix}
2R_1&0&0\\
0&R_1&R_2\\
0&R_2&R_1
\end{pmatrix}.
$$ 
An example of a
structure $(M,[\la,\mu])$ with such $R$ is given by the
following forms $(\om_1,\bar{\om}_1,\om,\Om)$:
\begin{eqnarray*}
&&\om_1={\rm e}^r(\der x+i {\rm e}^{2(u+f)}\der y),\\
&&\bar{\om}_1={\rm e}^r(\der x-i {\rm e}^{2(u+f)}\der y),\\
&&\om=\der u,\\
&&\Om=\der r+2 \der u+2f_x\der x,
\end{eqnarray*}
with a real function $f=f(x,y)$ of real variables $x$ and $y$. These
two variables, supplemented with the real $u$ and $r$, constitute a
coordinate system  
$(u,x,y,r)$ on  $R_+\to P\to M$. The triple $(u,x,y)$ parametrizes
$M$, and $r$ is related to the positive fiber coordinate $\rho$ via $\rho={\rm e}^r$. 

For each choice of a twice differentiable function $f=f(x,y)$ 
the forms $(\om_1,\bar{\om}_1,\om,\Om)$ satisfy the differential system (\ref{systey}) with
$$
K_1\equiv 0,\quad\quad T_1\equiv 0,\quad\quad T_0\equiv 0,
$$
and the relative invariant $K_0$ being
$$K_0=-{\rm e}^{-2(r+u+f)}f_{xy}.$$
A special case here is $f_{xy}\equiv 0$, in particular $f\equiv
0$. If this happens the corresponding structures
$(M,[\la,\mu])$ are all equivalent to the structure with 4-dimensional transitive
symmetry group having $\al=0$ in Theorem \ref{4symtm}. If $f_{xy}\neq
0$, then $K_0\neq 0$, and the corresponding structures have the curvature of the Cartan
connection $\tilde{\om}$ in the form 
$$
R=
-{\rm e}^{-2(r+u+f)}\begin{pmatrix}
2i \om_1\dz\bar{\om}_1&0&0\\
0&i\om_1\dz\bar{\om}_1&0\\
0&0&i\om_1\dz\bar{\om}_1
\end{pmatrix}f_{xy}.
$$ 
As such they are \emph{special cases} of structures with
$R\in\mathfrak{h}_0$. We will retutn to them in Section \ref{koso},
where we further analyze the case $K_0\neq 0$, $T_1=0$ and $K_1=0$.

\subsubsection{Curvature $R\in\mathfrak{h}_1$}\label{zaza}

The case of $R\in\mathfrak{h}_0$ is entirely characterized by the
requirement that all 
the relative invariants $t_1,k_0,k_1$ identically vanish. Examples of
such structures are structures with a 4-dimensional transitive group of
symmetries given in Theorem \ref{4symtm}. However these examples do
not exhaust the list of nonequivalent structures having
$R\in\mathfrak{h}_1$. 
To find them \emph{all} we proceed as follows.

We want to find all structures with
$$
R=
\begin{pmatrix}
0&0&0\\
R_3&0&0\\
\bar{R}_3&0&0
\end{pmatrix},
$$
i.e. those for which \emph{all} the relative invariants $T_1$, $K_1$,
$K_0$, as in (\ref{systey}), vanish: 
\be
T_1\equiv 0,\quad\quad K_0\equiv 0,\quad\quad 
K_1\equiv 0.\label{zaz}
\ee
Assuming (\ref{zaz}), equations (\ref{systey}) guarantee that real coordinates $u$ and
$r$ may be introduced on $P$ such that
$$\om=\der u,\quad\quad \Om=\der r.$$
Then, taking the exterior derivatives of both sides of equations
(\ref{systey}), we see that (\ref{zaz}) forces $T_0$ to be a real
function of $u$ only. Denoting this function by $\alpha=\alpha(u)$ we have
$$T_0=\alpha(u).$$ 
Integrating the system for such $T_0$, and denoting the
$u$-derivatives by primes, we get the following theorem.
\begin{theorem}\label{alu}
A structure $(M,[\lambda,\mu])$ of an oriented congruence with
vanishing twist, $a\equiv 0$, nonvanishing shear, $s\neq 0$, and having 
the curvature of its corresponding Cartan connection $\tilde{\om}$ of
the pure $\mathfrak{h}_1$ type, $R\in\mathfrak{h}_1$, can be
locally represented by
$$\lambda=\der u,\quad\quad \mu=\der
z-(\frac{\bar{h}'}{h}+\frac{\bar{h}}{h}-i\al\frac{\bar{h}}{h})\der\bar{z},$$
where the complex function $h=h(u)\neq 0$ satisfies a second order ODE:
\be
h''+2h'+(\al^2+i\al')h=0.\label{alul}\ee
Here the nonequivalent structures are
distinguished by the real invariant $T_0=\al(u)$.
\end{theorem}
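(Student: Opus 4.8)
The plan is to push through the explicit integration of the reduced differential system produced by the hypothesis $R\in\mathfrak{h}_1$, and then to recognize the resulting first–order system as the linearization of the ODE (\ref{alul}). I take for granted everything established just before the statement: imposing (\ref{zaz}) in (\ref{systey}) kills the first and last structural equations, so that locally $\om=\der u$ and $\Om=\der r$ for real functions $u,r$ on $P$, and the $\der^2=0$ (Bianchi) identity forces the surviving invariant to descend to $M$ as $T_0=\al(u)$.

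First I would eliminate the fibre coordinate. Since $\rho=\mathrm e^{r}$, the form $\eta_1:=\rho^{-1}\om_1=\mathrm e^{-r}\om_1$ is $r$–independent, hence is the pullback of a $(1,0)$–form on $M$; a one–line computation using the two middle equations of (\ref{systey}) collapses them into the single reduced equation
\[
\der\eta_1=\bigl[(1+i\al)\eta_1+\bar\eta_1\bigr]\dz\der u ,
\]
which now carries all the remaining content. The decisive feature is the \emph{absence} of an $\eta_1\dz\bar\eta_1$ term on the right, which says exactly that $\eta_1$ is closed on each leaf $u=\mathrm{const}$.

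Next I would choose adapted coordinates. Because $\eta_1$ is proportional to $\mu$ it annihilates the splitting line $\vv$, so after straightening the congruence (the $G\equiv0$ normalization of Proposition \ref{hg}, i.e. $\lambda=\der u$ with $\partial_u$ spanning $\vv$) I may take a leaf–holomorphic coordinate $z$, constant along $\partial_u$, normalized on a reference leaf $u=u_0$ by $\eta_1|_{u_0}=\der z|_{u_0}$. Then $\eta_1=A\,\der z+B\,\der\bar z$ has no $\der u$–component, with $A|_{u_0}=1$ and $B|_{u_0}=0$. Substituting into the reduced equation and comparing the $\der u\dz\der z$, $\der u\dz\der\bar z$ and $\der z\dz\der\bar z$ parts yields $B_z=A_{\bar z}$ together with the \emph{linear} system $A_u=-(1+i\al)A-\bar B$, $B_u=-(1+i\al)B-\bar A$. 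Its coefficients depend on $u$ alone and its Cauchy data are independent of $z$, so $A=A(u)$ and $B=B(u)$, and the component $B_z=A_{\bar z}$ becomes automatic.

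Finally, using the allowed rescaling (\ref{ec}), I set $\mu:=A^{-1}\eta_1=\der z+(B/A)\der\bar z$ and $h:=A$. Eliminating $B$ via $\bar B=-h'-(1+i\al)h$ and differentiating once converts the system into $h''+2h'+(\al^2+i\al')h=0$, which is (\ref{alul}), while $B/A=-(\bar h'+(1-i\al)\bar h)/h$ reproduces precisely $\mu=\der z-W\der\bar z$ with the stated $W$. I would close by noting that $T_0=\al$ is the sole invariant (it reappears as the coefficient in (\ref{alul}) and as the parameter in (\ref{to})), that nonvanishing shear amounts to $H_u\neq0$ — valid near $u_0$ since $B'(u_0)=-1$ — and that $|H|<1$, equivalently $\eta_1\dz\bar\eta_1\neq0$, persists by continuity from $u_0$. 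The only genuinely delicate points are the coordinate straightening that removes the $\der u$–component of $\eta_1$ (so that $A,B$ are honest functions with $z$–independent data) and the bookkeeping reconciling the function $u$ on $P$ with the representative $\lambda=\der u$ on $M$; the integration itself is then merely a linear ODE in $u$.
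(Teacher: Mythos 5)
Your proposal is correct and follows essentially the same route as the paper: after imposing (\ref{zaz}) one sets $\om=\der u$, $\Om=\der r$, deduces $T_0=\al(u)$ from $\der^2=0$, and integrates the remaining structure equation, which the paper summarizes as ``integrating the system for such $T_0$'' and you carry out explicitly (descending via $\eta_1=\mathrm e^{-r}\om_1$, reading off the linear first--order system for $A,B$ in congruence--adapted coordinates, and eliminating $B$ to obtain (\ref{alul})). Your computations, including the normalization $A|_{u_0}=1$, $B|_{u_0}=0$ justified by the leafwise closedness of $\eta_1$, check out against the stated forms of $\mu$ and of the ODE.
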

Note that if $\al(u)=const$ we recover the structures from Theorem
\ref{4symtm}.\\

\subsection{The case $T_1\equiv 0$}

Now we pass to the general case $T_1\equiv 0$. To proceed we have to
distinguish two subcases:
\begin{itemize}
\item $K_1\equiv 0$
\item $K_1\neq 0$.
\end{itemize}
\subsubsection{The case $K_1\equiv 0$}\label{koso}
In this situation we have 
$$\der\Om=iK_0\om_1\dz\bar{\om}_1,$$
with $K_0$ given by (\ref{to})-(\ref{wwn}). Since $K_0$ is not
identically equal to zero, because this correponds to the case
$t_1\equiv 0$, $k_0\equiv 0$, $k_1\equiv 0$ already
studied, we use it to
fix $\rho$ by the
requirement 
\be 
K_0={\rm sign}(k_0)=\pm 1.\label{sn23}
\ee
We note that this sign is an invariant of the structures under
consideration. This implies that the structures with different signs
are nonequivalent.

After the normalization (\ref{sn23}) the forms
$(\om_1,\bar{\om}_1,\om,\Omega)$ are defined as forms on $M$. Performing
the standard Cartan analysis on the system (\ref{systey}), we verified
that after pullback to $M$ it reads: 
\begin{eqnarray}
&&\der \om=0,\nonumber\\
&&\der\om_1=(iB -A)\om_1\dz\bar{\om}_1+i T_0\om_1\dz\om+\bar{\om}_1\dz\om,\label{ssy}\\
&&\der\bar{\om}_1=(iB+A)\om_1\dz\bar{\om}_1+\om_1\dz\om-i T_0\bar{\om}_1\dz\om,\nonumber\\
&&\der[(A+iB)\om_1+(A-iB)\bar{\om}_1+\om]=\pm i\om_1\dz\bar{\om}_1.\nonumber
\end{eqnarray}
Here the real functions $A,B,T_0$ are the scalar invariants on $M$. They 
satisfy the following 
integrability conditions
\begin{eqnarray}
&&\der A=[A_1+\tfrac{i}{2}(B_1+\bar{B}_1\pm
  1)]\om_1+[A_1-\tfrac{i}{2}(B_1+\bar{B}_1\pm 1)]\bar{\om}_1+(A-B
T_0)\om\nonumber\\
&&\der B=B_1\om_1+\bar{B}_1\bar{\om}_1+(A T_0-B)\om\label{ssyp}\\
&&\der T_0\dz\om=0,\nonumber
\end{eqnarray}
with the functions $A_1$ (real) and $B_1$ (complex)
being the scalar invariants of the next higher order. In principle, we
could have written the explicit fotmulae for all these scalar
invariants in terms of the defining variables $b,q,p$ and $s$ of
(\ref{st1}). We refrain from doing this, because the formulae are
quite complicated, and not enlightening. 

We summarize these considerations in the following theorem.
\begin{theorem}\label{t0con}
All locally nonequivalent structures $(M,[\lambda,\mu])$ of oriented
congruences having vanishing twist, nonvanishing shear, with
$T_1\equiv 0$ and $K_1\equiv 0$, are described by the invariant forms
$(\omega,\omega_1,\bar{\omega}_1)$ satisfying the system
(\ref{ssy})-(\ref{ssyp}) 
on $M$.
\end{theorem}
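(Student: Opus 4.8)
The plan is to treat this as the concluding reduction step of the Cartan procedure begun in Theorem~\ref{th324}, specializing the structural system (\ref{systey}) to the branch $T_1\equiv 0$, $K_1\equiv 0$. Throughout I assume $K_0\not\equiv 0$, since the complementary possibility $K_0\equiv 0$ means $t_1\equiv 0$, $k_0\equiv 0$, $k_1\equiv 0$ and is already exhausted by the pure $\mathfrak{h}_1$ case of Theorem~\ref{alu}. With $T_1=K_1=0$ the last line of (\ref{systey}) collapses to $\der\Om=iK_0\,\om_1\dz\bar{\om}_1$, and because $K_0=\tfrac{k_0}{2\rho^2}$ scales nontrivially along the fibre of $\bbR_+\to P\to M$ with $k_0\neq 0$, the equation $K_0={\rm sign}(k_0)=\pm 1$ solves uniquely for $\rho$, namely $\rho^2=\tfrac12|k_0|$. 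First I would record that this cuts out a canonical section of $P\to M$, reducing the one-parameter structure group to the identity; since no admissible transformation can alter ${\rm sign}(k_0)$, the two signs label genuinely inequivalent families, accounting for the $\pm$ in (\ref{ssy}).

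Next I would pull the coframe back along this section. The forms $\om,\om_1,\bar{\om}_1$ stay linearly independent (the section is transverse to the fibre), hence remain a coframe on the $3$-manifold $M$. The remaining form $\Om$ is now a $1$-form on $M$, so it must decompose in this coframe; writing the most general real-type combination and absorbing the normalization of $\rho$ into $\der\log\rho$ forces $\Om=(A+iB)\om_1+(A-iB)\bar{\om}_1+\om$ for real functions $A,B$ on $M$, which become the new scalar invariants. Substituting this expression together with $T_1=K_1=0$ and $K_0=\pm 1$ into (\ref{systey}) yields precisely the first three lines of (\ref{ssy}), while $\der\Om=\pm i\,\om_1\dz\bar{\om}_1$ reproduces the fourth. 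The point I would emphasize is that the $\om$-coefficient of $\Om$ is pinned to $1$: this is exactly where the explicit formula (\ref{to})--(\ref{wwn}) for the $\om$-term of $\Om$ on $P$, together with the fibre dependence of $\rho$, has to be inserted, and it is the one genuinely computational spot in the argument.

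With the coframe fixed I would then impose $\der^2\equiv 0$ on (\ref{ssy}). Differentiating the three structure equations and matching coefficients against the invariant basis $\{\om_1\dz\bar{\om}_1,\om_1\dz\om,\bar{\om}_1\dz\om\}$ expresses $\der A$ and $\der B$ in the coframe, thereby introducing the next-order invariants $A_1$ (real) and $B_1$ (complex) and producing the first two lines of (\ref{ssyp}); the compatibility of the fourth equation of (\ref{ssy}) with the first three delivers the constraint $\der T_0\dz\om=0$, i.e.\ that $T_0$ is constant along the leaves annihilated by $\om$ (these are integral, since $\der\om=0$). This is the content of the integrability conditions (\ref{ssyp}), and I would check that they are self-consistent so that no further reduction of the bundle is possible.

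Finally I would close the equivalence question. Having arrived at a rigid $e$-structure $(\om,\om_1,\bar{\om}_1)$ on $M$ with no residual gauge freedom, the general principle recalled in Section~4 applies: two such structures are locally equivalent iff some diffeomorphism $M\to M'$ carries $(\om,\om_1,\bar{\om}_1)$ to $(\om',\om_1',\bar{\om}_1')$, and any such diffeomorphism automatically intertwines all the structure functions $T_0,A,B$ together with every higher invariant obtained by repeated differentiation; conversely, equality of a functionally independent subset of these scalars yields the diffeomorphism through the implicit-function argument around (\ref{k}). Hence the system (\ref{ssy})--(\ref{ssyp}) is a complete local invariant for this branch, which is the assertion of the theorem. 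The main obstacle is the bookkeeping singled out in the second paragraph --- verifying that the pulled-back $\Om$ has the stated normalized shape and that the resulting $\der^2=0$ identities are exactly (\ref{ssyp}) --- rather than anything conceptual.
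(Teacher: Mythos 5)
Your proposal is correct and follows essentially the same route as the paper: fix $\rho$ by normalizing $K_0=\mathrm{sign}(k_0)=\pm1$ (noting that $K_0\equiv0$ would put us back in the already-settled case with all relative invariants zero), pull the coframe and $\Omega$ back to $M$, read off the structure equations (\ref{ssy}) with the new scalar invariants $A,B$, and obtain (\ref{ssyp}) as the $\der^2=0$ integrability conditions, with the $\pm$ sign an invariant separating the two families. You correctly isolate the one computational point the paper passes over silently (that the $\om$-coefficient of the pulled-back $\Omega$ equals $1$, which indeed follows from the explicit formulae (\ref{n771}) together with the identity forced by $\der^2\Omega=0$ on $P$).
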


Thus having a representative $(\lambda,\mu)$ of a structure with
vanishing twist, nonvanishing shear and with
$T_1\equiv 0$, we always can gauge it to the invariant
forms satisfying system (\ref{ssy})-(\ref{ssyp}). The other way around: given two 1-forms
$\om$ and $\omega_1$ satisfying the system (\ref{ssy})-(\ref{ssyp}), we may consider
them as a representative pair $(\lambda=\om ,\mu=\om_1)$ of a certain 
structure with
vanishing twist, nonvanishing shear and with
$T_1\equiv 0$.

The immediate consequence of the integrabilty conditions (\ref{ssyp})
is the {\it nonexistence} of structures (\ref{ssy}) 
with a strictly 3-dimensional
transitive group of symmetries. This is because, if such structures
existed, they would have {\it constant} invariants $A$, $B$ and
$T_0$. Thus, for such structures the right hand sides of all the equations
(\ref{ssyp}) would be zero. But
this is impossible, since in such a situation 
the second equation (\ref{ssyp}) implies $B_1\equiv 0$ which, when
compared with equating to zero the r.h.s of the first equation
(\ref{ssyp}), gives contradiction. 

A family of nonequivalent structures $(M,[\la,\mu])$ from this branch
of the classification is given in Section \ref{koso1}. Indeed,
consider the examples of this section for which $$f_{xy}\neq 0.$$ Since
this guarantees that $K_1\neq 0$, and since we have $T_1=0$ and $K_1=0$ (and, what is
less important for us here $T_0=0$) for them, we may perform the above
described normalization procedure on the invariant forms
$(\om_1,\bar{\om}_1,\om,\Om)$ defined in \ref{koso1}. A simple
calculation, based on the normalization
\be
-{\rm e}^{-2(r+u+f)}f_{xy}=\pm 1,\label{ipp}
\ee
leads to the reduction to $M$, where the invariant forms read:
\begin{eqnarray*}
&&\om=\der u,\\
&&\om_1={\rm e}^{-(u+f)}\big(\mp f_{xy}\big)^{\frac{1}{2}}(\der x+i{\rm e}^{2(u+f)}\der
  y),\\
&&\bar{\om}_1={\rm e}^{-(u+f)}\big(\mp f_{xy}\big)^{\frac{1}{2}}(\der x-i{\rm
    e}^{2(u+f)}\der y).
\end{eqnarray*}
They satisfy the system (\ref{ssy})-(\ref{ssyp}) with the functions $A$ and
$B$ given by:
\begin{eqnarray*}
&&A=\tfrac14\big(\mp f_{xy}\big)^{-\frac{3}{2}}\Big(2f_xf_{xy}+f_{xxy}\Big) {\rm e}^{u+f}\\
&&B=\tfrac14 \big(\mp
f_{xy}\big)^{-\frac{3}{2}}\Big(2f_yf_{xy}-f_{xyy}\Big){\rm e}^{-u-f}.
\end{eqnarray*}
These structures can thus be represented on $M$ by 
$$\la=\der u,\quad\quad\mu=\der x+i{\rm e}^{2\big(u+f(x,y)\big)}\der y.$$ 
The only scalar invariants for them are the functions $A$ and $B$ as
above, since as we already noticed, the scalar invariant $T_0$
identically vanishes, $T_0\equiv 0$.

Note in particular, that given a function $f=f(x,y)$, two structures $(M,[\la,\mu])$
with $\la$, $\mu$ as above, corresponding to two different signs of
$f_{xy}$ are nonequivalent. This is 
because the sign $\pm$ in (\ref{ipp}) is an invariant of such structures.   

\begin{remark}
The structures described above belong to a subclass of structures for which the
curvature $R$ is much more restricted than to $\mathfrak{h}_0$. Since,
in addition to $T_0\equiv 0$, we have here $T_1\equiv 0$, the curvature $R$
is actually contained in the diagonal 1-dimensional subalgebra of
$\mathfrak{h}_0$. Moreover, since also $K_1\equiv 0$, the curvature
$R$ does not involve $\om\dz$ terms. This means that in this example,
similarly as in all examples with $T_0\equiv T_1\equiv K_1\equiv
0$, the curvature of the Cartan connection $\tilde{\om}$ is
\emph{horizontal from the point of view of the principal 
fiber bundle} $H_0\to P\to P/{\mathcal D}$ discussed in
Remark \ref{bif}. Thus here, the Cartan connection $\tilde{\om}$ 
can be reinterpreted 
as a $\mathfrak{g}_4$-valued Cartan connection on the bundle $H_0\to P\to P/{\mathcal D}$ .
\end{remark}

\subsubsection{The case $K_1\neq 0$}
If $K_1\neq 0$ we can use definition (\ref{to}) to scale it in such a
way that it has values on the unit circle
$$K_1={\rm e}^{i\gamma}.$$
This fixes $\rho$ uniquely, and the system (\ref{systey}) is again reduced to
an invariant system on $M$. This reads (with new $A$ and $B$):
\begin{eqnarray}
&&\der \om=0,\nonumber\\
&&\nonumber\\
&&\der\om_1=(iB -A)\om_1\dz\bar{\om}_1+(1-C+i
  T_0)\om_1\dz\om+\bar{\om}_1\dz\om,\nonumber\\
&&\label{ssy1}\\
&&\der\bar{\om}_1=(iB+A)\om_1\dz\bar{\om}_1+\om_1\dz\om+(1-C-i
  T_0)\bar{\om}_1\dz\om,\nonumber\\
&&\nonumber\\
&&\der[(A+iB)\om_1+(A-iB)\bar{\om}_1+C\om]=\nonumber\\
&&iK_0\om_1\dz\bar{\om}_1+{\rm e}^{i\gamma}\om_1\dz\om+{\rm e}^{-i\gamma}\bar{\om}_1\dz\om.\nonumber
\end{eqnarray}
Here, all the real invariants are $T_0$, $A$, $B$, $C$, $\gamma$ and $K_0$ are well
defined functions on $M$. They are expressible in terms of the
original variables defining the structure and the functions
$k_0$, $k_1$ of (\ref{wwn}). In particular,
$$K_0=2\frac{k_0}{|k_1|^2}.$$
To discuss the integrabilty conditions for the system (\ref{ssy1}) 
we have to distinguish two cases:
\begin{itemize}
\item either $K_1={\rm e}^{i\gamma}\neq\pm 1$,
\item or $K_1={\rm e}^{i\gamma}\equiv\pm 1$.
\end{itemize}
In the first case:
\begin{eqnarray}
&&\der T_0=i({\rm e}^{i\gamma}\om_1-{\rm
    e}^{-i\gamma}\bar{\om}_2)+T_{00}\om\nonumber\\
&&\nonumber\\
&&\der A=\tfrac12[i(\tfrac{K_0}{2}+A_1)+A_2]\om_1+\tfrac12[-i(\tfrac{K_0}{2}+A_1)+A_2]\bar{\om}_1+A_0\om\nonumber\\
&&\nonumber\\
&&\der B=\tfrac12[-\tfrac{K_0}{2}+A_1+iB_1]\om_1+\tfrac12[-\tfrac{K_0}{2}+A_1-iB_1]\bar{\om}_1+B_0\om\nonumber\\
&&\label{ssy2}\\
&&\der C=[-2 A+A C+A_0+B T_0+i(B C-A T_0+B_0)+{\rm
    e}^{i\gamma}]\om_1+\nonumber\\
&&[-2 A+A C+A_0+B T_0-i(B C-A T_0+B_0)+{\rm e}^{-i\gamma}]\bar{\om}_1+C_0\om\nonumber\\
&&\nonumber\\
&&\der\gamma=[B+(A+\gamma_1)\cot\gamma+i\gamma_1]\om_1+[B+(A+\gamma_1)\cot\gamma-i\gamma_1]\bar{\om}_1+\gamma_0\om\nonumber\\
&&\nonumber\\
&&\der
    K_0=K_{01}\om_1+\bar{K}_{01}\bar{\om}_1+2[(A+\gamma_1)\csc\gamma+(1-C)K_0]\om,\nonumber
\end{eqnarray}
and in addition to the the basic scalar invariants 
$K_0$, $\gamma$, $A$, $B$, $C$, we have higher order scalar invariants
$A_0,A_1,A_2,B_0,B_1,C_0,\gamma_0,\gamma_1$ (all real) and $K_{01}$
(complex).

In the second case, when ${\rm e}^{i\gamma}\equiv\pm 1,$ one of the
integrabilty conditions is the vanishing of the scalar invariant $A$ of 
(\ref{ssy1}),
$$A\equiv 0.$$
The rest of the integrabilty conditions are 
\begin{eqnarray}
&&\der T_0=\pm i(\om_1-\bar{\om}_2)+T_{00}\om\nonumber\\
&&\nonumber\\
&&\der B=[-\tfrac{K_0}{2}+iB_1]\om_1+[-\tfrac{K_0}{2}-iB_1]\bar{\om}_1+B_0\om\nonumber\\
&&\label{ssy2p}\\
&&\der C=[B T_0+i(B C+B_0)\pm1]\om_1+\nonumber\\
&&[B T_0-i(B C+B_0)\pm1]\bar{\om}_1+C_0\om\nonumber\\
&&\nonumber\\
&&\der
    K_0=K_{01}\om_1+\bar{K}_{01}\bar{\om}_1+2[\mp B+(1-C)K_0]\om,\nonumber
\end{eqnarray}
with the new higher order scalar invariants $B_0,B_1,C_0$ (all real) and $K_{01}$
 (complex).
\begin{theorem}\label{t0con1}
All locally nonequivalent structures $(M,[\lambda,\mu])$ of oriented
congruences having vanishing twist, nonvanishing shear, with
$T_1\equiv 0$ and $K_1\neq 0$, are described by the invariant forms
$(\omega,\omega_1,\bar{\omega}_1)$ satisfying 
\begin{itemize}
\item either the system
(\ref{ssy1}), (\ref{ssy2}) 
on $M$, in which case $K_1={\rm e}^{i\gamma}\neq\pm1$,
\item or the system (\ref{ssy1}), (\ref{ssy2p}) on $M$, in which case
  $K_1\equiv\pm1$ and $A\equiv 0$.
\end{itemize}
\end{theorem}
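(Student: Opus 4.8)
The plan is to carry the Cartan reduction of this subsection to its conclusion by normalizing the surviving relative invariant $K_1$ and then extracting the integrability conditions. First I would begin with the structural equations (\ref{systey}) on the four-dimensional bundle $\bbR_+\to P\to M$, specialized by the standing hypothesis $T_1\equiv 0$. By (\ref{to}) the invariant $K_1=k_1/(2\rho)$ scales nontrivially along the fiber, so the hypothesis $K_1\neq 0$ lets me impose $K_1={\rm e}^{i\gamma}$, which fixes $\rho$ uniquely (namely $\rho=|k_1|/2$) and thereby selects a section of $P\to M$. Pulling the coframe $(\om,\om_1,\bar{\om}_1,\Om)$ back along this section lands everything on the three-dimensional base $M$.

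Second, on $M$ the triple $(\om,\om_1,\bar{\om}_1)$ is a coframe, so the pulled-back $\Om$ must be a combination of these three forms; writing it as $(A+iB)\om_1+(A-iB)\bar{\om}_1+C\om$ and substituting the normalized $\rho$ into (\ref{systey}) yields precisely the reduced system (\ref{ssy1}), in which $T_0,A,B,C,\gamma,K_0$ are now genuine scalar functions on $M$. The identity $K_0=2k_0/|k_1|^2$ records how $K_0$ descends.

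Third, I would impose $\der^2=0$ on each line of (\ref{ssy1}). Computing these exterior derivatives determines $\der T_0,\der A,\der B,\der C,\der\gamma,\der K_0$ and introduces the next-order invariants $A_0,A_1,A_2,B_0,B_1,C_0,\gamma_0,\gamma_1,K_{01}$. Away from the special locus this closes up to give (\ref{ssy2}); note that the $\cot\gamma$ and $\csc\gamma$ terms in the $\der\gamma$ and $\der K_0$ equations are regular exactly when $\sin\gamma\neq 0$, i.e. when ${\rm e}^{i\gamma}\neq\pm 1$.

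The main obstacle is the degenerate branch ${\rm e}^{i\gamma}\equiv\pm 1$. Because the $\cot\gamma$ and $\csc\gamma$ contributions become singular as $\sin\gamma\to 0$, this case is not a limit of the generic one and must be redone from scratch with $\gamma$ held constant at $0$ or $\pi$. Re-examining the closure of (\ref{ssy1}) under this constraint, the symmetry between the $\om_1$ and $\bar{\om}_1$ coefficients forced by ${\rm e}^{i\gamma}=\pm 1$ collapses one integrability identity to the single condition $A\equiv 0$; the remaining identities then reduce to (\ref{ssy2p}) with the reduced set of higher-order invariants $B_0,B_1,C_0,K_{01}$. Verifying that $A\equiv 0$ is exactly the surviving constraint, and that nothing further is forced, is the delicate computational point, and is where the Mathematica verification mentioned in the introduction does the heavy lifting.
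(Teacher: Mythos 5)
Your proposal is correct and follows the paper's own route: normalizing $K_1={\rm e}^{i\gamma}$ fixes $\rho=|k_1|/2$ and gives a section of $\bbR_+\to P\to M$, the pulled-back $\Om$ is expanded on the coframe with coefficients $A$, $B$, $C$ to produce (\ref{ssy1}), and the integrability conditions $\der^2=0$ then split into the generic branch (\ref{ssy2}) when $\sin\gamma\neq 0$ and the degenerate branch where $K_1\equiv\pm1$ forces $A\equiv 0$ and yields (\ref{ssy2p}). This is essentially identical to the argument in the paper.
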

As it is readily seen fom the integrabilty conditions (\ref{ssy2}),
(\ref{ssy2p}) neither of these cases admits structures with a strictly
3-dimensional transitive symmetry group (look at the equations for
$\der T_0$ in (\ref{ssy2}),
(\ref{ssy2p}), and observe that
$T_0=const$, which implies $\der T_0=0$, is forbidden!).
\subsection{The case $T_1\neq 0$}
To analyze this case we again start with the basic system
(\ref{systey}) and we assume that $t_1\neq 0$. This assumption enables
us to normalize $T_1$ so that its modulus is equal to one. Thus now we
require
$$|T_1|=1,$$
which uniquely fixes $\rho$ to be $$\rho=|t_1|.$$ After such
normalization all the forms become forms on $M$ and, depending on the
location of $T_1$ on the unit circle, we have to consider two cases:
\begin{itemize}
\item either $T_1={\rm e}^{i\delta}\neq \pm1$,
\item or $T_1=\pm1$.
\end{itemize}
We analyze the $T_1\neq \pm 1$ case first. 
Here we easily reduce the system (\ref{systey}) to the following system
on $M$:
\begin{eqnarray}
&&\der \om=({\rm e}^{i\delta}\om_1+{\rm e}^{-i\delta}\bar{\om}_1)\dz\om,\nonumber\\
&&\der\om_1=(iB -A)\om_1\dz\bar{\om}_1+(1-C+i
  T_0)\om_1\dz\om+\bar{\om}_1\dz\om,\label{ssy45}\\
&&\der\bar{\om}_1=(iB+A)\om_1\dz\bar{\om}_1+\om_1\dz\om+(1-C-i
  T_0)\bar{\om}_1\dz\om.\nonumber
\end{eqnarray}
It has the following integrability conditions:
\begin{eqnarray}
&&\der\delta=[\delta_1+i((B-\delta_1)\cot\delta-A)]\om_1+[\delta_1-i((B-\delta_1)\cot\delta-A)]\bar{\om}_1+\delta_0\om\nonumber\\
&&\label{ssy46}\\
&&\der T_0\dz\om=\nonumber\\
&&\{[B_0+BC-AT_0+2\sin\delta+i(2A-AC-BT_0-A_0+C_1)-\nonumber\\
&&{\rm
      e}^{i\bet}(T_0-iC)]\om_1+\nonumber\\
&&[B_0+BC-AT_0+2\sin\delta-i(2A-AC-BT_0-A_0+\bar{C}_1)-\nonumber\\
&&{\rm e}^{-i\bet}(T_0+iC)]\bar{\om}_1\}\dz\om.\nonumber
\end{eqnarray}
Here, the new scalar invariants are: $T_0,\delta,A,B,C$ (real), and the
higher order scalar invariants are: $\delta_0,\delta_1,B_0$ (real) and
$C_1$ (complex).

In the $T_1\equiv\pm1$ case the equations (\ref{ssy45}) are still
valid, provided that we put $$B\equiv 0.$$ This condition is implied
by $T_1\equiv\pm1$. Thus in this case the invariant forms satisfy
\begin{eqnarray}
&&\der \om=\pm(\om_1+\bar{\om}_1)\dz\om,\nonumber\\
&&\der\om_1=-A\om_1\dz\bar{\om}_1+(1-C+i
  T_0)\om_1\dz\om+\bar{\om}_1\dz\om,\label{ssy47}\\
&&\der\bar{\om}_1=A\om_1\dz\bar{\om}_1+\om_1\dz\om+(1-C-i
  T_0)\bar{\om}_1\dz\om.\nonumber
\end{eqnarray}
The integrability conditions for this system are:
\begin{eqnarray}
&&\der T_0=T_{00}\om+\nonumber\\
&&\big((\mp1-A)T_0+i(2A-AC-A_0+C_1\pm C)\big)\om_1+\label{ssy48}\\
&&\big((\mp1-A)T_0-i(2A-AC-A_0+\bar{C}_1\pm C)\big)\bar{\om}_1,\nonumber
\end{eqnarray}
with the invariant sign equal to $\pm1$, the new scalar invariants being: $T_0,A,C$ (real), and the
higher order scalar invariants being: $B_0$, $T_{00}$ (real) and
$C_1$ (complex).

We summarize with the following theorem.
\begin{theorem}\label{t0con2}
All locally nonequivalent structures $(M,[\lambda,\mu])$ of oriented
congruences having vanishing twist, nonvanishing shear, with
$T_1\neq0$, are described by the invariant forms
$(\omega,\omega_1,\bar{\omega}_1)$ satisfying 
\begin{itemize}
\item either the system
(\ref{ssy45}), (\ref{ssy46}) 
on $M$, in which case $T_1={\rm e}^{i\delta}\neq\pm1$,
\item or the system (\ref{ssy47}), (\ref{ssy48}) on $M$, in which case
  $T_1\equiv\pm1$.
\end{itemize}
\end{theorem}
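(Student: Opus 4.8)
The plan is to follow the standard reduction in Cartan's method, starting from the structural equations (\ref{systey}) of Theorem~\ref{th324} on the bundle $\bbR_+\to P\to M$, and to use the hypothesis $T_1\neq 0$ (equivalently $t_1\neq 0$) to absorb the fibre coordinate $\rho$. Since $T_1=t_1/\rho$ by (\ref{to}), the requirement $|T_1|=1$ fixes $\rho=|t_1|$ uniquely; this selects a section $M\hookrightarrow P$ and pulls every form $\om,\om_1,\bar{\om}_1,\Om$ back to a genuine form on the three-manifold $M$. First I would carry out this pullback explicitly: because $\dim M=3$, the real form $\Om$ (dual to $\rho\partial_\rho$) can no longer be independent of the coframe $(\om,\om_1,\bar{\om}_1)$, so it must decompose as $\Om=(A+iB)\om_1+(A-iB)\bar{\om}_1+C\om$ for new real scalar functions $A,B,C$ on $M$, while the invariant $T_0$ survives unchanged. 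Substituting this decomposition and $\rho=|t_1|$ into (\ref{systey}) immediately produces the first line of (\ref{ssy45}), namely $\der\om=({\rm e}^{i\delta}\om_1+{\rm e}^{-i\delta}\bar{\om}_1)\dz\om$ with $\delta=\arg t_1$, together with the $\der\om_1,\der\bar{\om}_1$ equations in which $A,B,C,T_0$ appear as the stated coefficients.

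Next I would split according to the position of $T_1={\rm e}^{i\delta}$ on the unit circle, since the residual behaviour differs at the real points $\pm1$. In the generic case $T_1\neq\pm1$ the angle $\delta$ is a bona fide invariant function, and I would obtain the full integrability system (\ref{ssy46}) by applying $\der$ to each reduced structural equation and using $\der^2\equiv0$ together with the equations themselves: expanding $\der^2\om$ yields the $\der\delta$ relation (the $\cot\delta$ term arises precisely here), while $\der^2\om_1$ and the reduced $\der\Om$ equation fix $\der T_0$ and the derivatives of $A,B,C$, introducing the next-order invariants $\delta_0,\delta_1,B_0$ (real) and $C_1$ (complex). In the special case $T_1\equiv\pm1$ the first structural equation reads $\der\om=\pm(\om_1+\bar{\om}_1)\dz\om$; applying $\der$ and substituting the reduced equations for $\der\om_1,\der\bar{\om}_1$ gives $\der^2\om=\pm 2iB\,\om_1\dz\bar{\om}_1\dz\om$, so that $\der^2\om\equiv0$ forces $B\equiv0$ (this is exactly the degeneration of the $\cot\delta$ term at $\delta\in\{0,\pi\}$ seen from (\ref{ssy46})). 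Setting $B\equiv0$ collapses (\ref{ssy45}) to (\ref{ssy47}), and repeating the $\der^2\equiv0$ computation yields the integrability conditions (\ref{ssy48}).

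The conceptually routine but computationally heavy part, and the main obstacle, is the exterior-derivative bookkeeping that produces (\ref{ssy46}) and (\ref{ssy48}): one must differentiate each structural equation, resubstitute using the equations, and match coefficients against the invariant basis of $3$-forms $\om_1\dz\bar{\om}_1\dz\om$ and so on without error, which is precisely the kind of calculation the authors report checking by Mathematica. The one genuinely structural point, as opposed to bookkeeping, is the identity $\der^2\om=\pm 2iB\,\om_1\dz\bar{\om}_1\dz\om$ in the $T_1\equiv\pm1$ case, which I would record as the invariant reason that the real value of $T_1$ pins the imaginary part $B$ of the coefficient of $\om_1$ in $\Om$ to zero. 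Finally, I would confirm that no further reduction is possible, namely that $(\om,\om_1,\bar{\om}_1)$ genuinely form a complete invariant coframe on $M$ and that the listed scalars are the full set of basic invariants in each branch, thereby establishing that the two alternatives of the theorem exhaust all locally nonequivalent structures with $T_1\neq0$.
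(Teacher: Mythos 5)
Your proposal follows the same route as the paper: normalize $|T_1|=1$ to fix $\rho=|t_1|$, pull the coframe back to $M$, split according to whether $T_1={\rm e}^{i\delta}$ equals $\pm1$, and extract the integrability conditions from $\der^2\equiv0$. Your explicit computation $\der^2\om=\pm2iB\,\om_1\dz\bar{\om}_1\dz\om$ correctly supplies the justification for the step the paper only asserts, namely that $T_1\equiv\pm1$ forces $B\equiv0$.
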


We pass to the determination of the structures with strictly
3-dimensional transitive group of symmetries.

Using the system (\ref{ssy45}), (\ref{ssy46}) 
we easily establish that in the case $T_1\neq\pm 1$ the
structures are governed by the following system of invariant forms:
\begin{eqnarray}
&&\der \om=({\rm e}^{i\delta}\om_1+{\rm e}^{-i\delta}\bar{\om}_1)\dz\om,\nonumber\\
&&\der\om_1=-\frac{1-C-\cos2\delta}{1-C+\cos 2\delta}{\rm e}^{-i\delta}\om_1\dz\bar{\om}_1+(1-C+i\sin2\delta)\om_1\dz\om+\bar{\om}_1\dz\om,\label{ssy40}\\
&&\der\bar{\om}_1=\frac{1-C-\cos2\delta}{1-C+\cos 2\delta}{\rm e}^{i\delta}\om_1\dz\bar{\om}_1+\om_1\dz\om+(1-C-i\sin2\delta)\bar{\om}_1\dz\om.\nonumber
\end{eqnarray}

In a similar way, if $T_1\equiv \pm1$, using the system (\ref{ssy47}),
(\ref{ssy48}), we see that the structures with 3-dimensional symmetry
groups are governed by the
following system:
\begin{eqnarray}
&&\der \om=\pm(\om_1+\bar{\om}_1)\dz\om,\nonumber\\
&&\der\om_1=\pm\om_1\dz\bar{\om}_1+iT_0\om_1\dz\om+\bar{\om}_1\dz\om,\label{ssy49}\\
&&\der\bar{\om}_1=\mp\om_1\dz\bar{\om}_1+\om_1\dz\om-i
  T_0\bar{\om}_1\dz\om.\nonumber
\end{eqnarray}

\section{Nonvanishing twist and nonvanishing shear}
The Cartan procedure applied to this case is very similar to the one in
Section \ref{vtns} concerned with $a\equiv 0$ and $s\neq
0$. There, before the final reduction to three dimensions, the
procedure stopped at the intermediate 4-dimensional manifold $M\times
\bbR_+$ parametrized by the points of $M$ and the positive coordinate
$\rho$. In the present case, in addition to $s\neq 0$, we also have
$a\neq 0$, which enables us to make an immediate reduction to
three dimensions and thus to produce invariants on $M$. 
Explicitly this reduction is
achieved as follows.

We start with the general system (\ref{str}) of Section
\ref{sec5}. We have $$ a\neq 0,\quad\quad\quad s\neq 0$$  
and we again write the complex shear function $s$ as
$$s=|s|{\rm e}^{i\psi}.$$ Now, for a chosen pair $(\lambda,\mu)$
representing the structure, we impose the conditions 
\begin{eqnarray}
\der\omega\dz\omega&=&i\omega_1\dz\bar{\omega}_1\dz\omega\label{twii}\\
\der\omega_1\dz\omega_1&=&\omega_1\dz\bar{\omega}_1\dz\omega\label{sheaea}
\end{eqnarray}
on the Cartan frame 
$$\om=f\lambda,\quad\quad \om_1=\rho{\rm e}^{i\phi}\mu,\quad\quad \bar{\om}_1=\rho{\rm e}^{-i\phi}\bar{\mu}.$$ 
Note that (\ref{twii}) is possible because of $a\neq 0$ and 
(\ref{sheaea}) is possible because of $s\neq 0$. It is a
matter of straightforward calculation to show that these two
conditions uniquely specify the choice of $f$, $\rho$ and $\phi$. To
write the relevant formulae for $f$, $\rho$ and $\phi$ we
denote the sign of $a$ by ${\rm e}^{i\epsilon \pi}$, where
$\epsilon=0$ or $1$. Then having ${\rm e}^{i\epsilon \pi}={\rm
  sign}(a)$, these formualae are:
$$f={\rm e}^{i\epsilon \pi}|s|,\quad\quad\quad \rho{\rm
  e}^{i\phi}=\sqrt{|a|}\sqrt{|s|}{\rm e}^{-\tfrac{i}{2}(\psi-\epsilon \pi)}$$
and the forms $(\omega,\om_1,\bar{\om}_1)$ satisfy
\beq
\der\om&=&i\om_1\dz\bar{\om}_1+k_1\om_1\dz\om+\bar{k}_1\bar{\om}_1\dz\om\nonumber\\
\der\om_1&=&k_2\om_1\dz\bar{\om}_1+k_3\om_1\dz\om+\bar{\om}_1\dz\om\label{systeas}\\
\der\bar{\om}_1&=&-\bar{k}_2\om_1\dz\bar{\om}_1+\om_1\dz\om+\bar{k}_3\bar{\om}_1\dz\om.\nonumber
\eeq 
Here the complex functions $k_1$, $k_2$, $k_3$ are defined on
  $M$ and:
\begin{eqnarray*}
k_1&=&\frac{(b|s|+|s|_\mu)}{\sqrt{|a|}\sqrt{|s|^3}}e^{\tfrac{i}{2}(\psi-\epsilon\pi)}\\
k_2&=&\frac{-(\log|a|)_{\bar{\mu}}+2 p-(\log|s|)_{\bar{\mu}}+i\psi_{\bar{\mu}}}{2\sqrt{|a|}\sqrt{|s|}}e^{-\tfrac{i}{2}(\psi-\epsilon\pi)}\\
k_3&=&\frac{ib_{\bar{\mu}} - i\bar{b}_\mu -ibp +i\bar{b}\bar{p} + 
      {\rm e}^{-i\epsilon\pi}|a|(q - \bar{q}-(\log|s|)_\lambda + i\psi_\lambda)}{2|a||s|}
\end{eqnarray*}
These functions constitute the full system of invariants of
$(M,[\lambda,\mu])$ for $a\neq 0$, $s\neq 0$.
\begin{theorem}
A given structure $(M,[\lambda,\mu])$ of an oriented congruence with
nonvanishing twist, $a\neq 0$, and nonvanishing shear, $s\neq 0$,
uniquely defines the frame of invariant 1-forms $\om,\om_1,\bar{\om}_1$ and
invariant complex functions $k_1,k_2,k_3$ on $M$. The forms and the
functions are determined by the requirement that they satisfy the 
system (\ref{systeas}). Starting with an arbitrary
representative $(\lambda,\mu)$ of the structure $[\lambda,\mu]$, the
forms are given by 
$$\om={\rm e}^{i\epsilon \pi}|s|\lambda,\quad\quad\om_1=\sqrt{|a|}\sqrt{|s|}{\rm e}^{-\tfrac{i}{2}(\psi-\epsilon
    \pi)}\mu,\quad\quad \bar{\om}_1=\sqrt{|a|}\sqrt{|s|}{\rm
    e}^{\tfrac{i}{2}(\psi-\epsilon \pi)}\bar{\mu},$$ where the shear
  function is $s=|s|{\rm
  e}^{i\psi}$. Here ${\rm e}^{i\epsilon\pi}$, $\epsilon=0,1$, denotes
    the sign of the twist function $a$. The system (\ref{systeas}) 
encodes all the invariant information of the structure
    $(M,[\lambda,\mu])$.
\end{theorem}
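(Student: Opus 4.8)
The plan is to carry out the Cartan reduction exactly as in Section \ref{vtns}, but to exploit the fact that \emph{both} basic relative invariants are now nonzero. Starting from any representative $(\lambda,\mu)$ of the class, subject to the general structure equations (\ref{str}) of Section \ref{sec5}, I would pass to the rescaled coframe $\om=f\lambda$, $\om_1=\rho\,{\rm e}^{i\phi}\mu$, $\bar{\om}_1=\rho\,{\rm e}^{-i\phi}\bar{\mu}$ and expand $\der\om,\der\om_1,\der\bar{\om}_1$ in the invariant basis of $2$-forms $\{\om_1\dz\bar{\om}_1,\ \om_1\dz\om,\ \bar{\om}_1\dz\om\}$, using (\ref{str}) together with the derivative-operator notation $\der u=u_\la\la+u_\mu\mu+u_{\bar{\mu}}\bar{\mu}$. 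The gauge freedom consists of exactly three real functions: the real $f$, and the modulus $\rho>0$ and phase $\phi$ hidden in $h=\rho\,{\rm e}^{i\phi}$.

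The decisive point is a dimension count. Condition (\ref{twii}) is a single \emph{real} equation: a direct computation gives $\der\om\dz\om=f^{2}ia\,\mu\dz\bar{\mu}\dz\la$ and $i\om_1\dz\bar{\om}_1\dz\om=i\rho^{2}f\,\mu\dz\bar{\mu}\dz\la$, so (\ref{twii}) amounts to $fa=\rho^{2}$, and this normalization is available precisely because $a\neq 0$. Condition (\ref{sheaea}) is a single \emph{complex} equation: since $\der\mu\dz\mu=s\,\mu\dz\bar{\mu}\dz\la$ one finds $\der\om_1\dz\om_1=h^{2}s\,\mu\dz\bar{\mu}\dz\la$, so (\ref{sheaea}) amounts to $h^{2}s=\rho^{2}f$, available precisely because $s\neq 0$. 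Thus we impose three real conditions on three real gauge parameters, and solving them yields $f=\mathrm{sign}(a)|s|$, $\rho^{2}=|a||s|$, and $\phi=-\tfrac12(\psi-\epsilon\pi)$ with ${\rm e}^{i\epsilon\pi}=\mathrm{sign}(a)$ and $s=|s|{\rm e}^{i\psi}$. This is the essential difference from Section \ref{vtns}: there $a\equiv 0$ disabled the twist normalization (\ref{twii}), leaving $\rho$ free and forcing the auxiliary factor $\bbR_+$ in $P=M\times\bbR_+$; here both normalizations are in force, the gauge is rigidly fixed, and the reduction lands directly on $M$.

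Because no residual parameter survives, the three $1$-forms $\om,\om_1,\bar{\om}_1$ form an invariant coframe (an absolute parallelism) on $M$ itself, with no Cartan connection on a bundle required. Substituting the normalized $f,\rho,\phi$ back into the expansions of $\der\om,\der\om_1,\der\bar{\om}_1$ and collecting coefficients produces the closed system (\ref{systeas}), and reading off the remaining coefficients gives $k_1,k_2,k_3$. \emph{This substitution is the main obstacle}: it is a lengthy but mechanical computation in which the numerous derivative terms (such as $|s|_\mu$, $\psi_{\bar{\mu}}$, $\psi_\la$, $b_{\bar{\mu}}$, $\bar{b}_\mu$, and products such as $bp$ and $\bar{b}\bar{p}$) must be reorganized, with the help of the commutation relations (\ref{com}), into the compact expressions stated for $k_1,k_2,k_3$. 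This is exactly the kind of step the authors verified with Mathematica; I would not reproduce it by hand beyond checking the leading structure.

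It remains to argue invariance and completeness. Since the defining conditions (\ref{twii}) and (\ref{sheaea}) are phrased entirely in terms of the normalized forms and their exterior derivatives, the same coframe is selected no matter which representative $(\lambda,\mu)$ of $[\lambda,\mu]$ one starts from; hence $\om,\om_1,\bar{\om}_1$ and the structure functions $k_1,k_2,k_3$ depend only on the equivalence class, i.e.\ they are genuine invariants on $M$ (with $\epsilon$ pinned down by the sign of $a$). Completeness then follows from the general mechanism recorded in equations (\ref{eq:roo1})--(\ref{k}): once a canonical coframe is in hand, the equivalence problem reduces to matching the coframes, which in turn is controlled by the scalar functions $k_1,k_2,k_3$ together with their successive derivatives along $\om,\om_1,\bar{\om}_1$. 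Two such structures are therefore locally equivalent iff these invariants correspond under a diffeomorphism, so the system (\ref{systeas}) does encode all the invariant information of $(M,[\lambda,\mu])$.
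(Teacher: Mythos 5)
Your proposal is correct and follows essentially the same route as the paper: the authors likewise impose the two normalizations (\ref{twii}) and (\ref{sheaea}) on the rescaled coframe, observe that they are available precisely because $a\neq 0$ and $s\neq 0$, and solve them to pin down $f$, $\rho$ and $\phi$ exactly as you do (your explicit reductions $fa=\rho^2$ and $h^2s=\rho^2 f$ agree with their stated solution), after which the structure functions $k_1,k_2,k_3$ of the resulting absolute parallelism on $M$ carry all the invariant information. The only difference is presentational: you spell out the gauge-parameter count and the contrast with the $a\equiv 0$ case of Section \ref{vtns}, which the paper mentions only in passing.
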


We pass to the determination of all homogeneous examples with
$a\neq 0$, $s\neq 0$. Now the maximal dimension of a group
of transitive symmetries is three. The structures with 3-dimensional
groups of symmetries correspond to those satisfying system
(\ref{systeas}) with all the functions $k_1,k_2,k_3$ being
constants. Applying the exterior differential to the system
(\ref{systeas}) with $k_1,k_2,k_3$ constants we arrive at the
following Theorem.

\begin{theorem}
All homogeneous structures $(M,[\lambda,\mu])$ with nonvanishing
twist, $a\neq 0$, and nonvanishing shear, $s\neq 0$, have a 
strictly 3-dimensional symmetry group and fall into four main types
characterized by:
\begin{itemize}
\item[\emph{I:}] $k_3=1$. In this case there is a 2-real parameter
  family of nonequivalent structures distinguished by real constants
  $x$ and $y$ related to the invariants $k_1$ and $k_2$ via: 
$$k_1=x,\quad\quad k_2=iy.$$ 
\item[\emph{II:}] $k_3={\rm e}^{i\phi}$, $0<\phi<2\pi$. In
  this case there is a 2-real parameter
  family of nonequivalent structures distinguished by real constants
  $x,y$ which together with the parameter $\phi$ are constrained by the equation
  $$\cos\phi(1-2xy+\cos\phi)=0.$$ The invariants $k_1,k_2,k_3$ are then
  given by $$k_1=x(\cot\tfrac{\phi}{2}+i),\quad\quad k_2=-iy(\cot\tfrac{\phi}{2}+i),\quad\quad k_3=\cos\phi+i\sin\phi.$$  
\item[\emph{III:}] $k_3+\bar{k}_3=0$, $k_3\neq\pm i$.  In this case there is a 3-real parameter
  family of nonequivalent structures distinguished by real constants
  $y'\neq\pm 1$, $x$, $y$  related to the invariants $k_1$, $k_2$, $k_3$ via:
  $$k_1=x+iy,\quad\quad k_2=\bar{k}_1=x-iy,\quad\quad k_3=iy'.$$ 
\item[\emph{IV:}] $|k_3|\neq 1$, $k_3+\bar{k}_3\neq 0$. In this case there is a 3-real parameter
  family of nonequivalent structures distinguished by real constants
  $x'\neq 0$, $y'$, $x$, $y$ constrained by the equation
$${x'}^2+{y'}^2+2y'(x^2+y^2)-4xy=1.$$
The invariants $k_1,k_2,k_3$ are then
  given by 
$$
k_1=x+iy,\quad\quad k_3=x'+iy',\quad\quad
k_2=\frac{\bar{k}_1(1+k_3^2)-k_1(k_3+\bar{k}_3)}{1-|k_3|^2}.
$$
\end{itemize}  
\end{theorem}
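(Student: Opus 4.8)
The plan is to reduce the classification to a finite algebraic problem for the constant structure functions $k_1,k_2,k_3$ of (\ref{systeas}) and then to solve that problem by splitting according to the position of $k_3$ in the complex plane.

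\textbf{Step 1 (homogeneity means constant invariants).} By the preceding theorem the invariant coframe $(\om,\om_1,\bar\om_1)$ and the functions $k_1,k_2,k_3$ already live on the three-manifold $M$ itself, with no auxiliary fibre left over. A structure is homogeneous exactly when it carries a transitive symmetry group; since any symmetry must preserve this rigid coframe, it is pinned down by its $1$-jet at a single point, so the symmetry group has dimension at most $\dim M=3$, while transitivity forces equality. Moreover a coframe on a $3$-manifold whose structure functions are \emph{constant} makes $M$ locally a Lie group on which the coframe is left invariant, so the action is simply transitive and exactly $3$-dimensional. This proves the first assertion; homogeneous structures are thus precisely those with constant $k_1,k_2,k_3$, and it remains to find the admissible constant triples.

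\textbf{Step 2 (the closure/Jacobi conditions).} With $k_1,k_2,k_3$ constant, (\ref{systeas}) is a system of Maurer--Cartan equations and defines a genuine local structure iff it is closed, $\der\der\om=\der\der\om_1=\der\der\bar\om_1=0$. Differentiating (\ref{systeas}) and re-expanding every exterior derivative again by (\ref{systeas}), each resulting $3$-form is a multiple of the volume form $\om_1\dz\bar\om_1\dz\om$. Reading off coefficients, $\der\der\om=0$ yields the single real relation
\be
k_3+\bar k_3=2\,{\rm Im}(k_1k_2),\label{plA}
\ee
$\der\der\om_1=0$ yields the single complex relation
\be
k_1-\bar k_2-k_2\bar k_3-\bar k_1 k_3=0,\label{plB}
\ee
and $\der\der\bar\om_1=0$ merely returns the conjugate of (\ref{plB}). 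Hence homogeneous structures with $a\neq0$, $s\neq0$ correspond bijectively to constant triples solving (\ref{plA})--(\ref{plB}).

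\textbf{Step 3 (solving the algebraic system).} I would treat $k_3$ as the primary datum and solve for $(k_1,k_2)$. Read as a real-linear system in $({\rm Re}\,k_2,{\rm Im}\,k_2)$, equation (\ref{plB}) together with its conjugate has determinant proportional to $1-|k_3|^2$, which dictates two regimes. If $|k_3|\neq1$ the system is nondegenerate and gives $$k_2=\frac{\bar k_1(1+k_3^2)-k_1(k_3+\bar k_3)}{1-|k_3|^2},$$ leaving $k_1$ free, after which (\ref{plA}) is one real equation in the four reals ${\rm Re}\,k_1,{\rm Im}\,k_1,{\rm Re}\,k_3,{\rm Im}\,k_3$. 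When $k_3+\bar k_3=0$ (and then $k_3\neq\pm i$, since $|k_3|\neq1$) the displayed $k_2$ collapses to $\bar k_1$ and (\ref{plA}) holds identically: this is type III, with its free three real parameters. When $k_3+\bar k_3\neq0$, substituting $k_2$ into (\ref{plA}) and clearing the nonzero factor ${\rm Re}\,k_3$ turns it into the quadric ${x'}^2+{y'}^2+2y'(x^2+y^2)-4xy=1$: this is type IV. If instead $|k_3|=1$, the system for $k_2$ is degenerate, so (\ref{plB}) constrains the phase of $k_1$ relative to that of $k_3$ and leaves one real component of $k_2$ undetermined, while (\ref{plA}) supplies the remaining relation. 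Writing $k_3={\rm e}^{i\phi}$, the subcase $\phi=0$ forces $k_1$ real and $k_2$ imaginary (type I), and the subcase $\phi\neq0$ produces the parametrisation $k_1=x(\cot\tfrac{\phi}{2}+i)$, $k_2=-iy(\cot\tfrac{\phi}{2}+i)$ of type II, with (\ref{plA}) becoming the stated relation $\cos\phi\,(1-2xy+\cos\phi)=0$.

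\textbf{Step 4 (inequivalence and conclusion).} Since $(\om,\om_1,\bar\om_1)$ and $k_1,k_2,k_3$ are intrinsically attached to the structure (uniquely, up only to the discrete sign ${\rm e}^{i\epsilon\pi}=\pm1$ of the twist), two homogeneous structures are locally equivalent iff their triples $(k_1,k_2,k_3)$ agree; hence the parameters listed in each type genuinely label pairwise inequivalent structures, and every homogeneous structure falls into exactly one of the four types. I expect the main obstacle to be Step 3: the careful case analysis of the bilinear/quadratic system (\ref{plA})--(\ref{plB}), especially the degenerate locus $|k_3|=1$ where (\ref{plB}) no longer determines $k_2$ and the admissibility conditions on $(x,y,\phi)$ must be extracted by hand, together with checking that the four solution branches are exactly the advertised families and exhaust all possibilities.
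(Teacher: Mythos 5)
Your strategy is exactly the one the paper takes (its entire proof of this theorem is the single sentence that one applies the exterior differential to the system (\ref{systeas}) with $k_1,k_2,k_3$ constant), and your Step 2 is correct: expanding $\der^2\om=0$ and $\der^2\om_1=0$ against the volume form $\om_1\dz\bar{\om}_1\dz\om$ gives precisely
\[
k_3+\bar{k}_3=2\,{\rm Im}(k_1k_2),\qquad\qquad k_1-\bar{k}_2-k_2\bar{k}_3-\bar{k}_1k_3=0,
\]
with $\der^2\bar{\om}_1=0$ the conjugate of the second. Steps 1 and 4 are sound, and your treatment of the nondegenerate regime $|k_3|\neq 1$ does reproduce types III and IV exactly, including the quadric ${x'}^2+{y'}^2+2y'(x^2+y^2)-4xy=1$ after clearing the factor ${\rm Re}\,k_3\neq 0$.

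The gap is where you yourself predicted it: the degenerate branch $|k_3|=1$. Two points there do not survive as written. First, the solvability condition of the degenerate linear system for $(k_2,\bar{k}_2)$ with $k_3={\rm e}^{i\phi}$ is $\cos\phi\,\bigl(\bar{k}_1{\rm e}^{i\phi}-k_1\bigr)=0$, so the subcase $k_3=\pm i$ is genuinely exceptional: there the phase of $k_1$ is \emph{not} constrained and one real component of $k_2$ stays free, so this locus must be handled separately rather than folded into the generic $\phi$ parametrization. Second, in the subcase $\phi=0$ you assert an unconstrained two-parameter family $k_1=x$, $k_2=iy$; but your own first closure relation then reads $2=2\,{\rm Im}(ixy)=2xy$, i.e.\ it still cuts down the parameter count, and more generally the real closure relation must be imposed on the whole $|k_3|=1$ branch whenever $\cos\phi\neq 0$. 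You have not carried out that imposition, nor checked that it lands on the advertised constraint $\cos\phi(1-2xy+\cos\phi)=0$ in the stated parametrization $k_1=x(\cot\tfrac{\phi}{2}+i)$, $k_2=-iy(\cot\tfrac{\phi}{2}+i)$ (a direct substitution does not obviously produce that expression, so the reconciliation is a real piece of work, not bookkeeping). Until the $|k_3|=1$ case analysis is actually executed and matched branch by branch to types I and II, Step 3 is a plan rather than a proof.
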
 

Among all the structures covered by the above theorem, the simplest have $k_1=k_2=k_3\equiv 0$. This unique structure belongs to the case III above and is the {\it flat case} for the branch $a\neq 0$, $s\neq 0$. We describe it in the following proposition.

\begin{proposition}
A structure of an oriented congruence $(M,[\lambda,\mu])$ with nonvanishing twist, $a\neq 0$, nonvanishing shear $s\neq 0$ and having  $k_1=k_2=k_3\equiv 0$, may be locally represented by forms 
\be
\la=\der u+\frac{\sqrt{2}{\rm e}^{iu}-i\bar{z}}{z\bar{z}-1}\der z+\frac{\sqrt{2}{\rm e}^{-iu}+iz}{z\bar{z}-1}\der \bar{z},\quad\quad\mu=\frac{2{\rm e}^{i u}}{z\bar{z}-1}\der{z}-\sqrt{2}\lambda,
\ee
where $(u,z,\bar{z})$ are coordinates on $M$. This structure has the local symmetry group of Bianchi type VIII, locally isomorphic to the group $\slg(2,\bbR)$. 
\end{proposition}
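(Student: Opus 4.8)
The plan is to leverage the classification already in force for this branch: by the theorem preceding the proposition, a structure with $a\neq0$, $s\neq0$ is completely encoded in the invariant coframe $(\om,\om_1,\bar\om_1)$ satisfying (\ref{systeas}), and the complex functions $k_1,k_2,k_3$ determine it up to local equivalence. First I would set $k_1=k_2=k_3\equiv0$, which collapses (\ref{systeas}) to the constant-coefficient system
\begin{eqnarray*}
\der\om&=&i\om_1\dz\bar\om_1,\\
\der\om_1&=&\bar\om_1\dz\om,\\
\der\bar\om_1&=&\om_1\dz\om.
\end{eqnarray*}
Since all structure functions are constant, this is a Maurer--Cartan system; by the standard converse to the Maurer--Cartan equations (Lie's third theorem, together with the equivalence theorem recalled in Section~4) the coframe is locally the left-invariant coframe of a unique $3$-dimensional Lie group $G$ acting simply transitively. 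Hence $M$ is locally $G$, the symmetry group is (maximally) $3$-dimensional, and \emph{any} two structures with these vanishing invariants are locally equivalent. This already establishes uniqueness of the flat structure.

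Next I would identify $G$. Writing $\om_1=\theta^1+i\theta^2$ and $\om=\theta^3$ with $\theta^j$ real, the reduced system becomes $\der\theta^1=\theta^1\dz\theta^3$, $\der\theta^2=-\theta^2\dz\theta^3$, $\der\theta^3=2\theta^1\dz\theta^2$. Reading off the structure constants of the dual frame $X_1,X_2,X_3$ yields $[X_3,X_1]=X_1$, $[X_3,X_2]=-X_2$, $[X_1,X_2]=-2X_3$. The substitution $X_1\mapsto E$, $X_2\mapsto -F$, $X_3\mapsto\tfrac12 H$ carries these to the standard relations $[H,E]=2E$, $[H,F]=-2F$, $[E,F]=H$ of $\sla(2,\bbR)$; the real (hyperbolic) eigenvalues $\pm1$ of $\mathrm{ad}\,X_3$ rule out the compact form $\soa(3)$, so the algebra is $\sla(2,\bbR)\cong\soa(1,2)$, i.e. Bianchi type VIII. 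This settles the symmetry-group claim.

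It remains to exhibit the stated coordinate representative, which I would do by direct verification. Starting from the given $\lambda$ and $\mu$ on the domain $z\bar z\neq1$, I would first check $\lambda\dz\mu\dz\bar\mu\neq0$, so that $(\lambda,\mu)$ is an admissible representative. Then I would compute $\der\lambda$ and $\der\mu$, expand them in the basis $(\mu\dz\bar\mu,\mu\dz\lambda,\bar\mu\dz\lambda)$ as in (\ref{str}) to extract $a,b,p,q,s$, confirming $a\neq0$ and $s\neq0$ (so that we are genuinely in this branch). Finally, substituting into the explicit formulas for $k_1,k_2,k_3$ following (\ref{systeas}) I would verify $k_1=k_2=k_3\equiv0$. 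By the uniqueness of the first paragraph, the stated $(\lambda,\mu)$ then necessarily represents the flat structure.

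The hard part is not this verification but the \emph{discovery} of the coordinates: one must integrate the Maurer--Cartan system of $\slg(2,\bbR)$ in coordinates adapted to the preferred splitting so that, after the normalizing gauge $\om={\rm e}^{i\epsilon\pi}|s|\lambda$ and $\om_1=\sqrt{|a|}\sqrt{|s|}\,{\rm e}^{-\frac{i}{2}(\psi-\epsilon\pi)}\mu$ of the preceding theorem, the representative acquires the stated rational form in $(u,z,\bar z)$. Concretely this means choosing a simply transitive action and a cross-section of the bundle $P\to M$; the appearance of $z\bar z-1$ and ${\rm e}^{\pm iu}$ signals the disc model $\sug(1,1)\cong\slg(2,\bbR)$ with $u$ a fibre coordinate of a circle bundle, exactly parallel to formula (\ref{cast}). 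Once a correct candidate is in hand the remaining check is finite, if lengthy, and is of the kind confirmed by Mathematica elsewhere in the paper.
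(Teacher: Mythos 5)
Your proposal is correct and follows essentially the same route the paper takes implicitly: with $k_1=k_2=k_3\equiv 0$ the system (\ref{systeas}) becomes a constant-coefficient Maurer--Cartan system, which fixes the structure uniquely up to local equivalence, identifies the symmetry algebra as $\sla(2,\bbR)$ (Bianchi type VIII, correctly distinguished from type IX by the real eigenvalues of $\mathrm{ad}\,X_3$), and leaves only the finite verification that the displayed $(\lambda,\mu)$ realize $a\neq 0$, $s\neq 0$ and vanishing $k_1,k_2,k_3$. Your Lie-algebra computation $[X_3,X_1]=X_1$, $[X_3,X_2]=-X_2$, $[X_1,X_2]=-2X_3$ checks out against the reduced structure equations.
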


\begin{remark}
There are more structures with $a\neq 0$, $s\neq 0$, which have a 3-dimensional transitive symmetry group of Bianchi type VIII. It is quite complicated to write them all here. For example, among them, there is a 1-parameter family of nonequivalent structures with $k_1=k_2\equiv 0$. They may be represented by 
\be
\la=\der u+\frac{\kappa{\rm e}^{iu}-i\bar{z}}{z\bar{z}-1}\der z+\frac{\kappa{\rm e}^{-iu}+iz}{z\bar{z}-1}\der \bar{z},\quad\quad\mu=(\kappa^2-1)\frac{2{\rm e}^{i u}}{z\bar{z}-1}\der{z}-\kappa\lambda,\label{flaka1}
\ee
where $\kappa>0,\kappa\neq 1$. The only nonvanishing invariant for this 1-parameter family is $k_3=-i(1-\tfrac{2}{\kappa^2})$. It may be considered as a deformation of the flat case above, which corresponds to $\kappa=\sqrt{2}$.
\end{remark}
\begin{remark}
In a similar way, among all the structures with $a\neq 0$, $s\neq 0$, which have a 3-dimensional transitive symmetry group of Bianchi type IX, we may easily characterize those with $k_1=k_2\equiv 0$. They may be represented by
\be
\la=\der u+\frac{\kappa{\rm e}^{iu}-i\bar{z}}{z\bar{z}+1}\der z+\frac{\kappa{\rm e}^{-iu}+iz}{z\bar{z}+1}\der \bar{z},\quad\quad\mu=(\kappa^2+1)\frac{2{\rm e}^{i u}}{z\bar{z}+1}\der{z}-\kappa\lambda,\label{flaka2}
\ee
where $\kappa>0$. Here the only nonvanishing invariant is $k_3=-i(1+\tfrac{2}{\kappa^2})$.
\end{remark}
\begin{remark}
It is interesting to remark which of the structures (\ref{flaka1}), (\ref{flaka2}) correspond to the flat CR-structure in the sense of Cartan. According to \cite{tafel}, they correspond to $\kappa=0,\sqrt{2}$ in the (\ref{flaka1}) case, and $\kappa=0$ in the (\ref{flaka2}) case. Thus in these cases the corresponding structures of an oriented congruence are locally CR-equivalent to the hyperquadric 
CR structure of Example \ref{hypc}, with a nonstandard splitting, which causes the shear $s\neq 0$.
\end{remark}

It is a rather complicated matter to describe which Bianchi types having a 
3-dimensional transitive symmetry group correspond to a given homogeneous structure with $a\neq 0$, $s\neq 0$. We remark that the groups of 
Bianchi types I and V {\it are excluded} for such structures. We also 
fully describe the situation for Bianchi types II and IV. This is summarized in the following theorem.

\begin{theorem}~\\
There are only two nonequivalent structures of an oriented congruence $(M,[\lambda,\mu])$ with $a\neq 0$, $s\neq 0$, which have a local transitive symmetry group of Bianchi type II. They may be locally represented by 
$$\lambda=\der u+\tfrac{i}{2}(z\der\bar{z}-\bar{z}\der z),\quad\quad \mu=\der z\pm \sqrt{2}(1-i)\lambda,$$
where $(u,z,\bar{z})$ are coordinates on $M$. The constant invariants are
$$k_1=\pm\frac{1-i}{\sqrt{2}},\quad\quad k_2=\pm\frac{1+i}{\sqrt{2}},\quad\quad k_3=-i,$$
and the sign $\pm 1$ distinguishes between the nonequivalent structures.\\

There are also only two 2-parameter families of nonequivalent structures of an oriented congruence $(M,[\lambda,\mu])$ with $a\neq 0$, $s\neq 0$, which have a local transitive symmetry group of Bianchi type IV. They may be 
locally represented by 
$$\lambda=y^{-1}(\der u-\log y \der x),\quad\quad \mu=y^{-1}\der (x+i y)\pm \sqrt{2}(1-i)w\lambda,$$
where $(u,x,y)$ are coordinates on $M$ and $w={\rm Re}(w)+i{\rm Im}(w)\neq 0$ is a complex parameter. The constant invariants are
$$k_1=\pm\frac{1-i}{\sqrt{2}}+\frac{i}{2\bar{w}},\quad\quad k_2=\pm\frac{1+i}{\sqrt{2}}+\frac{i}{2\bar{w}},\quad\quad k_3=-i\pm(\frac{1+i}{\bar{w}}+\frac{1-i}{w}),$$
and the two real parameters ${\rm Re}(w)$ and ${\rm Im}(w)$, together with the sign 
 $\pm 1$ distinguish between the nonequivalent structures.
\end{theorem}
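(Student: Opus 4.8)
The plan is to reduce everything to the algebra of the structural equations. By the preceding theorem, any structure with $a\neq 0$, $s\neq 0$ carrying a $3$-dimensional transitive symmetry group has constant invariants $k_1,k_2,k_3$, so the system (\ref{systeas}) becomes the Maurer--Cartan system of a $3$-dimensional Lie group $G$, with $(M,[\lambda,\mu])$ locally modelled on $G$ and the symmetry algebra isomorphic to the Lie algebra $\mathfrak{g}$ dual to the invariant coframe $(\om,\om_1,\bar{\om}_1)$. The entire problem then becomes: determine the Bianchi type of $\mathfrak{g}$ as a function of $(k_1,k_2,k_3)$, and isolate the triples giving types $II$ and $IV$.

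First I would pass to a real coframe, writing $\theta^1=\tfrac12(\om_1+\bar{\om}_1)$ and $\theta^2=\tfrac{1}{2i}(\om_1-\bar{\om}_1)$, and re-expand $\der\om,\der\theta^1,\der\theta^2$ in the wedge basis of $(\om,\theta^1,\theta^2)$. Reading off the coefficients yields the real structure constants of $\mathfrak{g}$; equivalently, using $\alpha([X,Y])=-\der\alpha(X,Y)$ on the dual frame $(X,X_1,\bar{X}_1)$, I would read the commutators $[X_1,\bar{X}_1]$, $[X,X_1]$, $[X,\bar{X}_1]$ directly off (\ref{systeas}) as linear expressions in $k_1,k_2,k_3$ and their conjugates.

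Next I would classify $\mathfrak{g}$ via the discrete invariants separating the Bianchi types: the dimension of the derived algebra $[\mathfrak{g},\mathfrak{g}]$, unimodularity (vanishing of $\operatorname{tr}\operatorname{ad}_X$ for all $X$), and, when $[\mathfrak{g},\mathfrak{g}]$ is two-dimensional, the eigenvalue/Jordan structure of the adjoint action of a transverse generator on $[\mathfrak{g},\mathfrak{g}]$. Bianchi $II$ is the nilpotent case with a one-dimensional central derived algebra, while Bianchi $IV$ is the non-unimodular solvable case whose adjoint operator on the two-dimensional derived algebra is a single non-semisimple Jordan block (repeated eigenvalue). Translating each of these conditions into polynomial relations among $k_1,k_2,k_3$, together with the integrability constraints already cutting out the four families of the preceding theorem, pins down the admissible triples. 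I expect type $II$ to force $k_3=-i$ with $k_1,k_2$ fixed up to a single sign, giving $k_1=\pm\tfrac{1-i}{\sqrt2}$, $k_2=\pm\tfrac{1+i}{\sqrt2}$, and type $IV$ to retain a genuine two-real-parameter freedom encoded by the complex parameter $w\neq 0$ in the stated formulas.

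Finally, for each surviving triple I would exhibit the explicit local model and verify it: insert the candidate $\lambda,\mu$ into (\ref{str}) to read off $a,b,p,q,s$, apply the normalization $\om={\rm e}^{i\epsilon\pi}|s|\lambda$, $\om_1=\sqrt{|a||s|}\,{\rm e}^{-\frac{i}{2}(\psi-\epsilon\pi)}\mu$, and confirm that (\ref{systeas}) holds with the advertised constants; the Heisenberg forms $\lambda=\der u+\tfrac{i}{2}(z\,\der\bar{z}-\bar{z}\,\der z)$, $\mu=\der z\pm\sqrt2(1-i)\lambda$ realize type $II$, and the solvable-group forms $\lambda=y^{-1}(\der u-\log y\,\der x)$, $\mu=y^{-1}\der(x+iy)\pm\sqrt2(1-i)w\lambda$ realize type $IV$. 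Nonequivalence is then automatic, since for homogeneous structures the normalized triple $(k_1,k_2,k_3)$ is a complete invariant, so distinct signs and distinct values of $w$ give inequivalent structures. The main obstacle I anticipate is the \emph{completeness} half of the statement --- that there are exactly two structures of type $II$ and exactly two two-parameter families of type $IV$ --- which requires showing the Bianchi-type conditions admit no solutions beyond those listed; this is a careful case analysis of the polynomial system in $(k_1,k_2,k_3)$ rather than merely exhibiting solutions.
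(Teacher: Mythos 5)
Your proposal is correct and follows essentially the same route as the paper: with constant $k_1,k_2,k_3$ the system (\ref{systeas}) is a Maurer--Cartan system, the Bianchi type is read off from the resulting structure constants via the standard discrete invariants, and the explicit coordinate models are verified by direct substitution into the normalization scheme. The paper itself only sketches this (the case analysis was done by machine computation), so your outline, including the acknowledged need to rule out further solutions of the polynomial conditions for types $II$ and $IV$, matches its argument.
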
 
\begin{remark}
We remark that the structures with a symmetry group of Bianchi type II are in a sense the limiting case of the two families of structures with Bianchi type IV. They correspond to the limit $|w|\to\infty$. 
\end{remark}

%\begin{remark} It is interesting to note that the moduli space of 
%nondegenerate 3-dimensional CR-structures with preferred
%  splitting $(M,[\lambda,\mu])$, described by IV above, 
%is itself a 3-dimensional nodegenerate CR-manifold. If we consider
%  $\bbC^2$ with holomorphic
%  coordinates $k_1$ and $k_3$ we see that this CR-manifold is  
%  embedded as a hypersurface (\ref{hy}) in $\bbC^2$ minus the
%  intersection of the $k_3+\bar{k}_3=0 .  
%\end{remark}

\section{Application 1: Lorentzian metrics in four
  dimensions}\label{cab}
In this section we use our results about oriented congruence
structures to construct Lorentzian metrics in 4-dimensions.

\subsection{Vanishing twist --
  nonvanishing shear case and $pp$-waves}
Since our oriented congruence structures are 3-dimensional objects, we
concentrate only on those structures, which in some \emph{natural} manner 
define an associated 4-dimensional manifold. As we noted in the
sections devoted to the Cartan analysis of the oriented 
congruence structures, in
some cases, such as those described in Section
\ref{vtns}, the Cartan bundle $P$ encoding the basic
invariants of the structures is 4-dimensional. So in this case, 
i.e. when the twist $a\equiv 0$ and the shear $s\neq 0$, we have a
4-dimensional manifold naturally associated with the oriented
congruence structure. Moreover, in such case the Cartan procedure 
provides us also with a rigid coframe of invariant forms
$(\om_1,\bar{\om}_1,\om,\Om)$ on $P$. Using these forms we may define 
\be
g=2\om_1\bar{\om}_1+2\om\Om,\label{tem}\ee
or, as suggested by the form of the associated Cartan connection,
\be
g=2\om_1\bar{\om}_1+2\om(\Om-\om)\label{temp}.\ee
These both are well defined Lorentzian
metrics on $P$, which are built only from the objects naturally and invariantly
associated with the oriented congruence structure.

To be more specific, let us consider the structures with the curvature
of the Cartan connection $R\in\mathfrak{h}_1$, as described in Theorem
\ref{alu}. In this case the bundle $P$ is parametrized by $(z,\bar{z},u,r)$
and the invariant forms are:
\begin{eqnarray*}
&&\Om=\der r,\quad\quad\om=\der u\\
&&\om_1={\rm e}^r \Big(h\der
z-(\bar{h}'+\bar{h}-i\al\bar{h})\der\bar{z}\Big)\\
&&\bar{\om}_1={\rm e}^r \Big(\bar{h}\der
\bar{z}-(h'+h+i\al h)\der
z\Big),
\end{eqnarray*}
with functions $\al=\al(u)$ (real) and $h=h(u)$ (complex) satisfying
the ordinary differential equation \ref{alul}. Inserting these forms in
the formulae (\ref{tem})-(\ref{temp}), we get the respective 
4-dimensional Lorentzian metrics
$$g_0=2{\rm e}^{2r} \Big(h\der
z-(\bar{h}'+\bar{h}-i\al\bar{h})\der\bar{z}\Big)\Big(\bar{h}\der
\bar{z}-(h'+h+i\al h)\der
z\Big)+2\der u\der r,$$
and 
$$g_{-1}=2{\rm e}^{2r} \Big(h\der
z-(\bar{h}'+\bar{h}-i\al\bar{h})\der\bar{z}\Big)\Big(\bar{h}\der
\bar{z}-(h'+h+i\al h)\der
z\Big)+2\der u(\der r-\der u).$$
It turns out that both these metrics have quite nice
properties.

Actually, introducing a still bigger class of metrics 
$$g_{c}=2{\rm e}^{2r} \Big(h\der
z-(\bar{h}'+\bar{h}-i\al\bar{h})\der\bar{z}\Big)\Big(\bar{h}\der
\bar{z}-(h'+h+i\al h)\der
z\Big)+2\der u(\der r-c\der u),$$
with $c={\rm const}\in \bbR$, one checks that they all are of type N in the Petrov
classification of 4-dimensional Lorentzian metrics. This means that
their Weyl tensor is expressed in terms of only one nonvanishing
complex function $\Psi_4$, called the Weyl spin coefficient, which
reads 
$$\Psi_4=2(i \al-c-1).$$
All the other Weyl coefficients $(\Psi_0,\Psi_1,\Psi_2,\Psi_3)$, which
together with $\Psi_4$ totally encode the Weyl tensor of $g_c$, are
identically zero.

Looking at the spin coefficient $\Psi_4$ we see that there is a
distinguished metric in the class $g_c$. This corresponds to
$c=-1$. In such case the Weyl tensor of $g$ is just proportional to
$\Psi_4=2i\al$ and we have a Lorentz-geometric interpretation of the
invariant $\al=\al(u)$ of the corresponding structure of the oriented
congruence. Confronting these considerations with the results of
Section \ref{zaza} we get the following 
\begin{theorem}
Every structure of an oriented congruence $(M,\la,\mu)$ with
vanishing twist, $a\equiv 0$, nonvanishing shear $s\neq 0$, and having the
curvature $R$ of its corresponding Cartan connection in
$\mathfrak{h}_1$, 
defines a Lorentzian metric 
$$g_{-1}=2\om_1\bar{\om}_1+2\om(\Om-\om),$$
which is of Petrov type N or conformally flat. The nonequivalent
metrics correspond to different structures of the oriented congruence,
and the metric is conformally flat if and only if $R\equiv 0$.
\end{theorem}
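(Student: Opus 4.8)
The plan is to exploit the explicit description already in hand. By Theorem \ref{alu}, any structure $(M,\la,\mu)$ with $a\equiv 0$, $s\neq 0$ and curvature $R\in\mathfrak{h}_1$ admits on its $4$-dimensional Cartan bundle $P$ (coordinatized by $(z,\bar z,u,r)$) the invariant coframe $(\om_1,\bar\om_1,\om,\Om)$ displayed just before the theorem, with $\al=\al(u)$ real and $h=h(u)$ governed by the ODE (\ref{alul}). Since this coframe is determined by the structure up to the $\epsilon$-ambiguity, which by construction yields equivalent structures, the bilinear form $g_{-1}=2\om_1\bar\om_1+2\om(\Om-\om)$ is a well-defined Lorentzian metric on $P$ built only from invariants of the structure. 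This establishes the existence and invariance claimed in the first sentence.

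For the Petrov type I would specialize the curvature computation already recorded for the family $g_c$. Setting $c=-1$ in the stated value $\Psi_4=2(i\al-c-1)$ gives $\Psi_4=2i\al$, while the remaining Weyl scalars $\Psi_0,\dots,\Psi_3$ vanish identically. Hence at each point the Weyl tensor is either algebraically special of type N (where $\al\neq0$) or zero (where $\al=0$); globally $g_{-1}$ is therefore of type N unless $\Psi_4\equiv0$, in which case it is conformally flat. The conformal-flatness criterion then follows by matching two vanishing conditions. On one side, $g_{-1}$ is conformally flat precisely when $\Psi_4=2i\al\equiv0$, i.e. when $\al\equiv0$. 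On the other side, for structures with $R\in\mathfrak{h}_1$ one has $T_1\equiv K_0\equiv K_1\equiv0$ by (\ref{zaz}), so the only surviving block of the Cartan curvature is $R_3=iT_0\,\om_1\dz\om=i\al\,\om_1\dz\om$; as $\om_1\dz\om\neq0$ this gives $R\equiv0$ if and only if $\al\equiv0$. Combining the two, $g_{-1}$ is conformally flat if and only if $R\equiv0$.

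For the correspondence between nonequivalent structures and nonequivalent metrics, the idea is that $\al$ is recoverable from $g_{-1}$: the quadruple principal null direction of the type N Weyl tensor is aligned with the repeated (degenerate) direction $\om$, and the invariant content of $\Psi_4=2i\al$ reconstructs $\al(u)$ as a function along that congruence, up to the reparametrization freedom of $u$. By Theorem \ref{alu}, $\al=T_0$ is a complete invariant separating nonequivalent structures, so two structures producing isometric metrics must share the same $\al$ and hence be equivalent.

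The hard part will be Part three: making precise that $\al(u)$ is a genuine \emph{isometry} invariant of $g_{-1}$ rather than an artifact of the chosen tetrad. One must argue that the null tetrad—and thus the normalization of $\Psi_4$—is forced by the metric up to the residual frame freedom, so that $\al$, read off from a curvature scalar assembled from $\Psi_4$ and its directional derivatives along $\om$, is intrinsic; only then does ``different $\al$'' translate into ``non-isometric metric.'' By contrast, the computational heart, namely the Weyl tensor of the family $g_c$, has already been carried out in the paragraph preceding the theorem and may be invoked directly, so the remaining work is conceptual rather than calculational.
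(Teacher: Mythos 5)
Your proposal is correct and follows essentially the same route as the paper: the theorem there is obtained by specializing the already-computed Weyl scalars of the family $g_c$ to $c=-1$ (so $\Psi_4=2i\al$ with $\Psi_0=\dots=\Psi_3=0$) and observing that for $R\in\mathfrak{h}_1$ the only surviving curvature block is $R_3=iT_0\,\om_1\dz\om$ with $T_0=\al$, whence conformal flatness is equivalent to $R\equiv 0$. The subtlety you flag about $\al$ being a genuine isometry invariant is not treated in any more detail in the paper either, which simply reads the correspondence off from the fact that the coframe, and hence $g_{-1}$ and $\Psi_4$, are canonically attached to the structure and that $T_0=\al(u)$ separates nonequivalent structures by Theorem \ref{alu}.
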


Interestingly metrics $g_{-1}$ are conformal to Ricci flat
metrics. The Ricci flat metric in the conformal class of $g_{-1}$ is
given by
$$\hat{g}_{-1}=\frac{2{\rm e}^{4u}}{(t+{\rm e}^{2u})^2}\Big(\big(h\der
z-(\bar{h}'+\bar{h}-i\al\bar{h})\der\bar{z}\big)\big(\bar{h}\der
\bar{z}-(h'+h+i\al h)\der
z\big)+{\rm e}^{-2r} \der u(\der r-\der u)\Big),$$
where $t$ is a real constant. For each $\al=\al(u)$ and for each
solution $h=h(u)$ of (\ref{alul}), the corresponding Ricci flat metric
is the so called \emph{linearly polarized $pp$-wave} from General
Relativity Theory (see \cite{exact}, p. 385).

\subsection{Nonvanishing twist -- vanishing shear case and the Bach metrics}\label{bbaa}
Another example of 4-dimensional Lorentzian manifolds naturally
associated with the structures of oriented congruences appears in the
nonvanishing twist -- vanishing shear case, as we explained in Section
\ref{cconse1}. Actually in Section \ref{cconse1} we defined
\emph{conformal} Lorentzian 4-manifolds equipped with the
\emph{conformal} class of Lorentzian metrics $[g_t]$, which are
naturally associated with a congruence structure with twist and
without shear. Here we study the conformal properties of these metrics.

\subsubsection{The Cotton and Bach conditions for conformal metrics}
We recall \cite{gover}
that a Lorentzian metric $g$ on a manifold $M$ is called
{\it conformal to Einstein} iff there exists a real function
$\Upsilon$ on $M$ such that the rescaled metric $\hat{g}={\rm
  e}^{2\Upsilon}g$ satisfies the Einstein equations
$Ric(\hat{g})=\Lambda\hat{g}$. In the case of an oriented $M$ with ${\rm dim} M=4$ there are two {\it necessary} conditions
\cite{Mas,CTPa} for $g$ to be conformal to Einstein (in
algebraically generic cases \cite{gover} these necessary conditions
are sufficient). To describe these conditions we denote by $F$ the
curvature 2-form of the Cartan normal conformal connection
$\omega_{[g]}$ associated with a conformal class $[g]$ (see
\cite{Kobayashi} for definitions). The curvature $F$ is horizontal.
Thus, choosing a representative $g$ of the conformal class $[g]$, we
can calculate its Hodge dual $*F$ and calculate the $6\times 6$
matrix of 3-forms \be D*F=\der*F+\omega_{[g]}\dz*F-*F\dz\omega_{[g]}
\ee for the connection $\omega_{[g]}$. This matrix has a remarkably
simple form
$$
D*F=\begin{pmatrix}
0&*j^\mu&0\\
0&0&*j_\mu\\
0&0&0
\end{pmatrix},
$$
where $*j^\mu$ is a vector-valued 3-form, the Hodge dual of the
so called {\it Yang-Mills current} $j^\mu$ for the conformal
connection $\omega_{[g]}$. Having said this we introduce the
vacuum Yang-Mills equation for the conformal connection $\omega_{[g]}$
\be
D*F=0\label{ym}
\ee
i.e. the condition that the Yang-Mills current $j^\mu$ vanishes.
It turns out that in ${\rm dim} M=4$
equations (\ref{ym}) are {\it conformally invariant}. They are equivalent to
the requirement that the {\it Bach tensor} of $g$ identically
vanishes \cite{Mas,gover}. This condition is known \cite{KNT} to constitute
a first system of
equations which a 4-dimensional metric $g$ must satisfy to be
conformal to Einstein.

Another independent condition can be obtained by decomposing $F$
into $F=F^+\oplus F^-$, where $*F^\pm=\pm i F^\pm$ are its selfdual
and antiselfdual parts (note that $i$ appears here as a consequence
of the assumed Lorentzian signature). Decomposing the curvatures
$F^\pm$ onto a basis of 2-forms $\{\theta^i\dz\theta^j\}$ associated
with a coframe $\{ \theta^i \}$ in which $g$ takes the form
  $g=g_{ij}\theta^i\theta^j$, we recall that the second necessary
  condition for a 4-metric $g$ to be conformal to Einstein is
\be
[F^+_{ij},F^-_{kl}]=0\quad\quad\forall i,j,k,l=1,2,3,4.\label{cotton}
\ee
Here $[,]$ is the commutator of the $6\times 6$ matrices $F^+_{ij}$ and
$F^-_{kl}$. We term (\ref{ym}) the {\it Bach condition} and (\ref{cotton})
the {\it Cotton condition} \cite{gover}.
\subsubsection{Conformal curvature of the associated metrics}
%/home/pawel/notebooks/denny/noshearsystemeinsteinfull.nb
%/noshearsystemeinsteinfull2008.nb
Now we calculate the Cartan normal conformal connection and its
curvature for the conformal metrics (\ref{feog}). We recall the
setting from Sections \ref{cconse}, \ref{cconse1}. The structure of an
oriented
congruence $(M,\la,\mu)$ with vanishing shear and nonvanishing twist 
defines a 5-dimensional principal fiber bundle $H_2\to P\to M$, on
which the invariant forms $(\om_1,\bar{\om}_1,\om,\Om,\bar{\Om})$, 
satisfying the system (\ref{syste}) reside. There is another fiber bundle
associated with such a situation. This is the bundle $P\to N$ with a 
4-dimensional base $N$ and with 1-dimensional fibers. The manifold $N$ is in addition fibered over $M$ also with 1-dimensional fibers. The
forms
$$\{\theta^1,\theta^2,\theta^3,\theta^4\}=\{\om_1,\bar{\om}_1,\om,ti(\bar{\Om}-\Om)\}$$
on $P$ are used to define a bilinear form
$G_t=2(\theta^1\theta^2+\theta^3\theta^4)$ on $P$. Although this is degenerate on $P$,
it projects to a well defined conformal class $[g_t]$ of
\emph{Lorentzian} metrics  
\be
g_t=2(\theta^1\theta^2+\theta^3\theta^4)\label{emu}\ee
on $N$, see (\ref{feog}).

One can try to calculate the Cartan normal conformal connection for
the metrics $g_t$ on $N$ itself, but we prefer to do this on the 5-dimensional bundle
$P$ instead. This is more convenient, since in
such an approach we can directly use the coframe derivatives
(\ref{syste}) of the forms $(\om_1,\bar{\om}_1,\om,\Om,\bar{\Om})$ on $P$,
without the neccessity of projecting them from $P$ to $N$.

Thus, in the following, we associate the
dual set of vector fields $(E_1,\bar{E}_1,E_0,E_2,\bar{E}_2)$ to
$(\om_1,\bar{\om}_1,\om,\Om,\bar{\Om})$, and we will
use them to denote the derivatives of the functions, such as the
invariants $K_1$, $K_2$ and $\bar{K}_2$. The conventions will be as
follows: the symbols $K_{11}=E_1(K_1)$ and $K_{1\bar{1}}=\bar{E}_1(K_1)$
will denote the directional
derivatives of $K_1$ in the respective directions of the vector fields $E_1$ and
$\bar{E}_1$. In particular $K_{2\bar{1}0}$ will denote 
$E_0(\bar{E}_1(K_2))$.

A (rather tedious) calculation gives the following expressions for the
Cartan normal conformal connection $\om_t$ for the metrics $g_t$ on $P$:
\be
\om_t=\begin{pmatrix}
\tfrac12(\Om+\bar{\Om})&\tau^1&\tau^2&\tau^3&\tau^4&0\\
&&&&&\\
\theta^1&-i\Omega_1&0&-\Omega_2&\tfrac{i}{2}\theta^1&\tau^2\\
&&&&&\\
\theta^2&0&i\Omega_1&-\bar{\Omega}_2&-\tfrac{i}{2}\theta^2&\tau^1\\
&&&&&\\
\theta^3&\tfrac{i}{2}\theta^2&-\tfrac{i}{2}\theta^1&-\tfrac{1}{2}(\Om+\bar{\Om})&0&\tau^4\\
&&&&&\\
\theta^4&\bar{\Omega}_2&\Omega_2&0&\tfrac{1}{2}(\Om+\bar{\Om})&\tau^3\\
&&&&&\\
0&\theta^2&\theta^1&\theta^4&\theta^3&-\tfrac{1}{2}(\Om+\bar{\Om})
\end{pmatrix}.
\ee
Here the 1-forms $\Omega_1$ (real) and $\Omega_2$ (complex) are
$$
\Omega_1=t K_1\theta^3+\tfrac{1-t}{2t}\theta^4,\quad
\Omega_2=itK_1\theta^1+it\bar{K}_2\theta^3,\quad\bar{\Omega}_2=-itK_1\theta^2-itK_2\theta^3
$$
and the 1-forms $\{\tau^1,\tau^2,\tau^3,\tau^4\}$ are:
\beq
\tau^1&=&-\tfrac{1}{6}(5t-2)K_1\theta^2 +
    \tfrac{1}{4}(2it K_{11} + K_2(1 - t))\theta^3\nonumber\\
\tau^2&=&\bar{\tau}^1=-\tfrac{1}{6}(5t-2)K_1\theta^1 +
    \tfrac{1}{4}(-2it K_{1\bar{1}} + \bar{K}_2(1 - t))\theta^3\nonumber\\
\tau^3&=&\tfrac{1}{4}(2it K_{11} - K_2(t+1))\theta^1 -
    \tfrac{1}{4}(2it \bar{K}_{1\bar{1}} +\bar{K}_2(t+1))\theta^2 - t^2K_1^2\theta^3 +\tfrac{1}{6}(4t-1)K_1
\theta^4\nonumber\\
\tau^4&=&\tfrac{1}{6}(4t-1)K_1\theta^3 - \tfrac{1}{4}\theta^4.\nonumber
\eeq

The next step, namely the calculation of the curvature
$F_t=\der\om_t+\om_t\dz\om_t$ of $\om_t$, is really tedious, but 
achievable with the help of symbolic calculation programs such as,
e.g. Mathematica. The resulting formulae are too complicated to display 
here, but the $\soa(1,3)$-part of the curvature, which is just the
Weyl tensor of $g_t$, is worth quoting. We present it in terms of the
(lifted to $P$) Weyl spinors $\Psi_0$, $\Psi_1$, $\Psi_2$, $\Psi_3$ and
$\Psi_4$. These read: 
%/noshearsystemeinsteinfull2008.nb
%/notebooks/denny/bestbachexamplesuperbetter.nb
\begin{eqnarray}
&&\Psi_0=0,\quad\quad\quad\quad\quad\Psi_1=0,\nonumber\\
&&\Psi_2=\tfrac16(1-4t) K_1,\nonumber\\
&&\Psi_3=\tfrac14\big(2itK_{1\bar{1}}+(3t-1)\bar{K}_2\big),\label{weylt}\\
&&\Psi_4=-i t\bar{K}_{2\bar{1}}.\nonumber
\end{eqnarray}

We have the following 
\begin{proposition}\label{haha}
Every metric $g_t$ with $K_1\equiv 0$ or $t=\tfrac14$ is of Petrov type $III$ or its specializations. If $t=\tfrac13$ and $K_1\equiv 0$,
then the conformal class $[g_{1/3}]$ of the metric
$g_{1/3}$ is of Petrov type $N$.
\end{proposition}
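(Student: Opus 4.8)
The plan is to read the Petrov type straight off the Weyl spinor components recorded in (\ref{weylt}), invoking the standard dictionary between the Petrov type and the coincidence pattern of the four principal null directions. The essential structural feature is that (\ref{weylt}) gives $\Psi_0=\Psi_1=0$ for \emph{every} value of $t$, so the quartic whose roots are the principal null directions already factors with a repeated root: the null leg dual to $\om$ is automatically an (at least) double principal null direction. Consequently the type is governed entirely by $\Psi_2,\Psi_3,\Psi_4$, according to the scheme: type $II$ or $D$ when $\Psi_2\neq0$; type $III$ when $\Psi_2=0\neq\Psi_3$; type $N$ when $\Psi_2=\Psi_3=0\neq\Psi_4$; and type $O$ (conformal flatness) when all three vanish. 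Thus ``type $III$ or a specialization'' is equivalent to $\Psi_2\equiv0$, and ``type $N$ or a specialization'' to $\Psi_2\equiv\Psi_3\equiv0$.

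For the first assertion I would simply substitute the hypotheses into $\Psi_2=\tfrac16(1-4t)K_1$ from (\ref{weylt}). Both $K_1\equiv0$ and $t=\tfrac14$ force $\Psi_2\equiv0$, while $\Psi_0=\Psi_1=0$ holds identically; so only $\Psi_3$ and $\Psi_4$ can survive, and by the dictionary above the metric is of Petrov type $III$ or one of its degenerations ($N$ or $O$).

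For the second assertion I would set $t=\tfrac13$ and impose $K_1\equiv0$, then inspect $\Psi_3=\tfrac14\big(2itK_{1\bar1}+(3t-1)\bar K_2\big)$. The crucial point is that the coefficient $3t-1$ of $\bar K_2$ vanishes precisely at $t=\tfrac13$, so the potentially surviving $\bar K_2$ term is eliminated there even though $K_2\neq0$; the remaining term $2itK_{1\bar1}$ vanishes because $K_1\equiv0$ forces all its directional derivatives, in particular $K_{1\bar1}=\bar E_1(K_1)$, to vanish. Together with $\Psi_2=\tfrac16(1-\tfrac43)K_1=0$ this yields $\Psi_0=\Psi_1=\Psi_2=\Psi_3=0$, leaving only $\Psi_4=-\tfrac{i}{3}\bar K_{2\bar1}$. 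Hence $[g_{1/3}]$ is of Petrov type $N$, degenerating to type $O$ exactly on the locus where $\Psi_4$ also vanishes.

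The one step that is not a bare substitution, and which I expect to be the main (if minor) obstacle, is confirming that $\Psi_4=-\tfrac{i}{3}\bar K_{2\bar1}$ does not vanish identically, so that the type is genuinely $N$ rather than the conformally flat specialization. Here one must be careful about the notation: differentiating the structural equations (\ref{syste}) (equivalently imposing $\der^2\Om=0$) under $K_1\equiv0$ produces the integrability relation $\bar E_1(K_2)=0$ together with the expected fiber-weight relations for $E_2(K_2)$ and $\bar E_2(K_2)$. This relation does \emph{not} kill $\Psi_4$, because $\bar K_{2\bar1}$ is the $\bar E_1$-derivative of $\bar K_2$, i.e. $\overline{E_1(K_2)}$, which is the complex conjugate of the independent higher-order invariant $E_1(K_2)$ and is left free by integrability. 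Thus $\Psi_4$ is a genuine, generically nonvanishing higher invariant, and the structures with $K_1\equiv0$, $K_2\neq0$ evaluated at $t=\tfrac13$ are of Petrov type $N$, conformal flatness occurring only where this invariant degenerates. The final bookkeeping --- confirming that no other component of the curvature $F_{1/3}$ reintroduces a nonzero $\Psi_2$ or $\Psi_3$ --- is immediate from (\ref{weylt}), since those components are displayed there explicitly.
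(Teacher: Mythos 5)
Your proposal is correct and is essentially the argument the paper intends: Proposition \ref{haha} is stated immediately after the display (\ref{weylt}) precisely because the Petrov type is read off from $\Psi_0=\Psi_1=0$ together with the vanishing of $\Psi_2=\tfrac16(1-4t)K_1$ (for $K_1\equiv0$ or $t=\tfrac14$) and of $\Psi_3$ (for $t=\tfrac13$, $K_1\equiv0$, where $3t-1=0$ and $K_{1\bar1}=0$). Your added care in distinguishing $\bar K_{2\bar1}=\overline{E_1(K_2)}$ from the integrability consequence $K_{2\bar1}=\bar E_1(K_2)=0$, so that $\Psi_4$ survives generically, is a correct and worthwhile clarification consistent with Theorem \ref{bact}.
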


Calculation of the Yang-Mills current $j=j_\mu\theta^\mu$ for $\omega_t$ is also
possible. Since the covariant derivative
of the Hodge dual of the curvature $F_t$ is 
\emph{horizontal} with repect to the bundle $P\to N$, 
the current components $j_\mu$, as viewed on $P$ or on $N$, differ only by
nonvanishing scales. The result of our calculation on $P$ reads:
\begin{eqnarray*}
j^1&=&\bar{j}^2=\tfrac13(1-4t)[K_{111}\theta^1-2iK_{11}\theta^4]+\tfrac16
j^1_2\theta^2-\tfrac16 j^1_3\theta^3\\
j^3&=&-\tfrac16 j^1_3\theta^1-\tfrac16\bar{j}^1_3\theta^2-\tfrac16
j^3_3\theta^3-\tfrac16 j^1_2\theta^4\\
j^4&=&\tfrac23
(4t-1)[K_1\theta^4+iK_{11}\theta^1-iK_{1\bar{1}}\theta^2]-\tfrac16 j^1_2\theta^3,
\end{eqnarray*}
where
\begin{eqnarray*}
j^1_2&=&(1-4t)(1-12t)K_1^2+(7t-1)(K_{11\bar{1}}+K_{1\bar{1}1})\\
j^1_3&=&16it(4t-1)K_1K_{11}-2(1-2t)(1-4t)K_1K_2+(1-4t)K_{2\bar{1}1}+\\
&&3it(K_{11\bar{1}1}+K_{1\bar{1}11})\\
j^3_3&=&16t^2(1-4t)K_1^3-36t^2K_{11}K_{1\bar{1}}+3(1-t)(1+3t)|K_2|^2+
2(t+2)K_{2\bar{1}3}-\\
&&24t^2K_1(K_{11\bar{1}}+K_{1\bar{1}1})+
2it(4-7t)(K_{1\bar{1}}K_2-K_{11}\bar{K}_2).
\end{eqnarray*}

We have also calculated the Cotton matrices
$[F^+_{tij},F^-_{tij}]$ for each value of the real parameter $t$. We
obtained formulae which are too complicated to write here. However
we observed, that among all the parameter values for $t$, there are
a few preferred ones for which the formulae simplify significantly.
These special parameter values are:
$$t=\pm\frac13,\quad\quad
t=\frac14,\quad\quad t=1.$$ Here we focus on $t=-\tfrac13$ and
$t=1$,  for which we have the following theorem.
\begin{theorem} \label{bact} If $t=-\frac13$ or $t=1$ and the relative invariant
$K_1\equiv 0$, then the
  conformal metrics $[g_t]$ satisfy the Bach condition. If in
addition the relative invariant $K_2\neq 0$, the metrics are \emph{not} conformally flat and do
  \emph{not} satisfy the Cotton condition. If $K_1\equiv K_2\equiv 0$ the
  conformal metrics $g_{-1/3}$ and $g_1$ have $F_t\equiv 0$, i.e. they are conformally flat.
\end{theorem}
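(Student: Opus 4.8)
The plan is to read the three assertions off the explicit Weyl spinors (\ref{weylt}) and the Yang--Mills current already recorded, after first extracting the one integrability identity that ties derivatives of $K_1$ to derivatives of $K_2$. Since the Bach condition is exactly the vanishing of the current $j=j_\mu\theta^\mu$ (equation (\ref{ym})), the first assertion reduces to checking $j^1=j^2=j^3=j^4=0$ when $K_1\equiv 0$ and $t\in\{-\tfrac13,1\}$. The key preliminary step is structural: imposing $\der^2\Om=0$ on the last equation of (\ref{syste}) and collecting the coefficient of $\om_1\dz\bar{\om}_1\dz\om$ yields the single identity $K_{2\bar{1}}=K_{10}$, where $(E_1,\bar{E}_1,E_0,E_2,\bar{E}_2)$ is the frame dual to $(\om_1,\bar{\om}_1,\om,\Om,\bar{\Om})$. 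Hence $K_1\equiv 0$ forces $K_{2\bar{1}}\equiv 0$, and therefore also $K_{2\bar{1}1}\equiv 0$ and $K_{2\bar{1}3}\equiv 0$.

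Feeding these vanishings into the displayed formulae, every term containing $K_1$ or a derivative of $K_1$ dies, and $j^1_2,j^1_3$ collapse to zero while $j^3_3$ collapses to $3(1-t)(1+3t)|K_2|^2$. Substituting back, I expect the only surviving component to be
\[
j^3=-\tfrac12(1-t)(1+3t)|K_2|^2\,\theta^3,
\]
which vanishes identically precisely at $t=1$ and $t=-\tfrac13$. This is the Bach condition at the two advertised values, and by the earlier remark that $j$ on $P$ and on $N$ differ only by a nonvanishing scale, the conclusion descends to $N$.

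For the second assertion I would again use $K_1\equiv 0$ (so $K_{1\bar 1}=0$) and $K_{2\bar{1}}\equiv 0$ in (\ref{weylt}), leaving $\Psi_0=\Psi_1=\Psi_2=\Psi_4=0$ and $\Psi_3=\tfrac14(3t-1)\bar{K}_2$, which equals $-\tfrac12\bar{K}_2$ at $t=-\tfrac13$ and $\tfrac12\bar{K}_2$ at $t=1$; since $K_2\neq 0$ this is nonzero, so $g_t$ is not conformally flat. To see that the Cotton condition (\ref{cotton}) fails I would specialize the already computed Cotton matrices $[F^+_{tij},F^-_{tkl}]$ to $t=-\tfrac13,1$ with $K_1\equiv 0$: at these preferred values the long expressions simplify, and one exhibits an entry proportional to a nonzero multiple of $K_2$ (equivalently of $\Psi_3$), so the commutator is nonzero whenever $K_2\neq 0$. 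Combined with Bach-flatness this gives the Bach-flat, not-conformal-to-Einstein examples. For the third assertion, setting $K_1\equiv K_2\equiv 0$ makes every $\Psi_i$ vanish, so the Weyl tensor is zero; in dimension four vanishing Weyl forces the Cotton tensor to vanish as well (contracted Bianchi identity), and since Weyl and Cotton exhaust the irreducible parts of the normal conformal curvature, $F_t\equiv 0$. Equivalently one substitutes $K_1=K_2=0$ directly into $\om_t$, whereupon $\Om_2$ and $\tau^1,\tau^2,\tau^3$ drop out and a short computation gives $\der\om_t+\om_t\dz\om_t=0$.

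The main obstacle is the Cotton step. Unlike the Bach part, it cannot be read off a short list of current components: it requires the full $6\times 6$ curvature $F_t$ and the commutator of its self- and anti-self-dual pieces, whose general form is too long to record. The real work is to confirm that at the two distinguished values $t=-\tfrac13,1$ this commutator does \emph{not} collapse all the way to zero but retains a term controlled by $K_2$, so that Bach-flatness and the failure of the Cotton condition genuinely coexist exactly where the $(1-t)(1+3t)$ factor kills the current.
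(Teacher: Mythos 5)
Your proposal is correct and follows essentially the same route as the paper, which verifies the theorem from the explicit Yang--Mills current, the Cotton matrices $[F^+_{tij},F^-_{tkl}]$, and the integrability condition of (\ref{syste}) giving $K_{2\bar 1}=0$ when $K_1\equiv 0$; your derivation of $K_{2\bar 1}=K_{10}$ from $\der^2\Omega=0$ and the resulting collapse of the current to $j^3=-\tfrac12(1-t)(1+3t)|K_2|^2\,\theta^3$ is exactly the mechanism that singles out $t=1$ and $t=-\tfrac13$. The remaining Cotton step is, as in the paper, a direct (symbolic) evaluation of the commutators showing a surviving term controlled by $K_2$, so no new idea is missing.
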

The theorem can be verified by using the explicit formulae for the
Yang-Mills current $j^\mu$, the matrices $[F^+_{tij},F^-_{tij}]$,
and the integrability conditions for the system (\ref{syste}) with
$K_1=0$. These integrability conditions, in particular, imply that
$K_{2\bar{1}}=0$.

We shall return to the other two interesting values $t=1/4$ and
$t=1/3$ for $g_t$ below, where we consider examples.
\subsubsection{Examples}
As noted above a particularly interesting class of structures
$(M,\la,\mu)$ corresponds to $K_1\equiv 0$ and $K_2\neq 0$. Looking at
the list of our examples presented in Section \ref{ntvs} we find such
a structure in Section \ref{k2niez}. This corresponds to a special
value of the parameter $\beta_K=-3^{\frac13}$ in the family of
structures described by the invariant system (\ref{b89}), and is locally represented by forms
  $\la$, $\mu$ as in (\ref{cast}) with $\beta_K=-3^{\frac13}$. Actually
it is worthwhile to write the metrics $g_t$ for \emph{all} the
structures covered by (\ref{cast}). These metrics read:
%%../denny/bestbachexamplegeneraldalejeiukoniec.nb 
\begin{eqnarray}
&&g_t=g_t(\beta)=2\der z\der\bar{z}+\nonumber\\
&&t\Big(\der u +\frac{2\beta{\rm
    e}^{-i\beta u}+i\bar{z}}{\beta(z\bar{z}-2\beta^2(2+\beta^3))}\der
z+\frac{2\beta{\rm
    e}^{i\beta u}-iz}{\beta(z\bar{z}-2\beta^2(2+\beta^3))}\der
\bar{z}\Big)\times\nonumber\\
&&\frac{(z\bar{z}-2\beta^2(2+\beta^3))^2}{2\beta^4}\Big(2\der r+\frac{2(\beta{\rm
    e}^{-i\beta u}-i\bar{z})}{z\bar{z}-2\beta^2(2+\beta^3)}\der
z+\frac{2(\beta{\rm
    e}^{i\beta u}+iz)}{z\bar{z}-2\beta^2(2+\beta^3)}\der\bar{z}\Big),\nonumber
\end{eqnarray}
and in addition to the real parameter $t$, they are parametrized by the real parameter 
$\beta\neq 0$ which enumerates nonequivalent structures
$(M,\la,\mu)$.

These are quite interesting conformal Lorentzian metrics for the
following reasons.

First, if $$\beta=\beta_K=-3^{\tfrac13},$$ we have $K_1\equiv 0$, and according 
to Theorem \ref{bact}, the metrics 
\begin{eqnarray}
&&g_{-1/3}(-3^{\frac13})=2\der z\der\bar{z}-\nonumber\\
&&\Big(\der u +\frac{2\cdot3^{\frac13}{\rm
    e}^{3^{\frac13} iu}-i\bar{z}}{3^{\frac13}(z\bar{z}+2\cdot 3^{\frac23})}\der
z+\frac{2\cdot 3^{\frac13}{\rm
    e}^{-3^{\frac13} iu}+iz}{3^{\frac13}(z\bar{z}+2\cdot 3^{\frac23})}\der
\bar{z}\Big)\times\nonumber\\
&&\frac{(z\bar{z}+2\cdot 3^{\frac23})^2}{18\cdot 3^{\frac13}}\Big(2\der r-\frac{2(3^{\frac13}{\rm
    e}^{3^{\frac13} iu}+i\bar{z})}{z\bar{z}+2\cdot 3^{\frac23}}\der
z-\frac{2(3^{\frac13}{\rm
    e}^{-3^{\frac13} iu}-iz)}{z\bar{z}+2\cdot 3^{\frac23}}\der\bar{z}\Big),\nonumber
\end{eqnarray}
and 
\begin{eqnarray}
&&g_{1}(-3^{\frac13})=2\der z\der\bar{z}+\nonumber\\
&&\Big(\der u +\frac{2\cdot3^{\frac13}{\rm
    e}^{3^{\frac13} iu}-i\bar{z}}{3^{\frac13}(z\bar{z}+2\cdot 3^{\frac23})}\der
z+\frac{2\cdot 3^{\frac13}{\rm
    e}^{-3^{\frac13} iu}+iz}{3^{\frac13}(z\bar{z}+2\cdot 3^{\frac23})}\der
\bar{z}\Big)\times\nonumber\\
&&\frac{(z\bar{z}+2\cdot 3^{\frac23})^2}{6\cdot 3^{\frac13}}\Big(2\der r-\frac{2(3^{\frac13}{\rm
    e}^{3^{\frac13} iu}+i\bar{z})}{z\bar{z}+2\cdot 3^{\frac23}}\der
z-\frac{2(3^{\frac13}{\rm
    e}^{-3^{\frac13} iu}-iz)}{z\bar{z}+2\cdot 3^{\frac23}}\der\bar{z}\Big),\nonumber
\end{eqnarray}
are \emph{Bach flat}. Since the invariant $K_2$
of the corresponding structures $(M,\la,\mu)$ is nonvanishing, they are also
\emph{not} conformal to any Einstein metric. Note that, again because of $K_1\equiv 0$ and $K_2\neq 0$, both metrics $g_{1}(-3^{\tfrac13})$ and
$g_{-1/3}(-3^{\tfrac13})$ are of general Petrov type $III$ (see Proposition \ref{haha}). As far as we
know, they both provide the first \emph{explicit} examples of
conformally non Einstein Bach metrics which are of this Petrov type
(compare e.g. with \cite{nurple}).

Second, note also that, since $K_1\equiv 0$
for $\beta_K=-3^{\tfrac13}$, the metric $g_{1/3}(\beta_K)$, with now $t=+1/3$, is also
quite interesting. According to Proposition \ref{haha} this metric is
of Petrov type N. In gravitation theory it would be also termed
\emph{twisting} type N (see \cite{exact}). It is not conformal to any 
Einstein metric, since for all metrics $g_t(\beta_K)$ the Bach tensor $B_t(\beta_k)$,
when expressed in terms of the coframe
$(\theta^1,\theta^2,\theta^3,\theta^4)$, reads
$$B_{t}(-3^{\frac13})=2^5\cdot 3^4~\frac{(t-1)(1+3t)}{(z\bar{z}+2\cdot
  3^{\frac23})^6}~\theta^3\odot\theta^3.$$
This obviously does not vanish, when $t=1/3$, hence the metrics
$g_{1/3}(\beta_K)$ are examples of twisting
  type N metrics, which are not conformally Einstein.

Third, suggested by the structure of the Weyl tensor (\ref{weylt}) 
for all the metrics $g_t$ we specialize the metrics $g_t(\beta)$ to the
case when $t=\tfrac14$. The Yang-Mills current for  this
special case may be read off from the general formulae from the
previous section. Here however we prefer to give the explicit formulae for the
Bach tensor for $g_{1/4}(\beta)$. Here again the Bach tensor $B_{1/4}(\beta)$ for
these metrics has a very simple form
$$B_{1/4}(\beta)=6~\frac{\beta^6(\beta^6+36\beta^3+36)}{(z\bar{z}-2\beta^2(2+\beta^3))^{6}}~\theta^3\odot\theta^3.$$ 
As is readily seen this vanishes for the following two real 
values of $\beta$:  
$$\beta_{S_1}=-\big(6(3+2\sqrt{2})\big)^{\tfrac13},\quad\quad
\beta_{S_2}=-\big(6(3-2\sqrt{2}\big)^{\tfrac13}.$$
Thus the two corresponding metrics
$g_{1/4}(\beta_{S_1})$, and
$g_{1/4}(\beta_{S_2})$ are further examples of
Bach Lorentzian metrics, which are again of Petrov type III. One can
check by direct calculation that they are also not conformal to any
Einstein metric.

Motivated by this last example we calculated the Bach tensor for
\emph{all} the metrics $g_{1/4}$ (not neccessarily those associated with the
$\beta$-parametrized-structures (\ref{cast})). This
calculation leads to the following
%%.../noshearsystemeinsteinfull2008podst14.nb
\begin{theorem}
If $t=\frac14$ and a structure $(M,\la,\mu)$ with nonvanishing twist and
vanishing shear has the relative invariant
$K_1$ satisfying 
$$K_{11\bar{1}}+K_{1\bar{1}1}\equiv 0,$$
then the
Bach tensor $B_t$ of the metrics $g_t$ corresponding to the
structure $(M,\la,\mu)$, as defined in (\ref{emu}),
has a very simple form
$$B_{1/4}=\frac{3}{32}\Big(4K_{11}K_{1\bar{1}}+2i(K_{11}\bar{K}_2-K_{1\bar{1}}K_2)-7K_2\bar{K}_2-4(K_{2\bar{1}0}+\bar{K}_{210})\Big)~\theta^3\odot\theta^3,$$
in which \emph{nine} out of the \emph{apriori} ten components, identically vanish. 
\end{theorem}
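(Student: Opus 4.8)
The plan is to read the Bach tensor directly off the Yang--Mills current $j^\mu=j^\mu_\nu\theta^\nu$ computed in the preceding subsection. In $\dim M=4$ the vanishing of the Bach tensor is exactly the vacuum Yang--Mills equation $D*F=0$ for the normal conformal connection, so the Bach tensor is a fixed real multiple of the (symmetrized) current $j^\mu$. Since the covariant derivative of $*F_t$ is horizontal for $P\to N$, as already noted, the current descends to $N$ and it suffices to work with the given expressions for $j^1,\dots,j^4$ on $P$, together with the auxiliary coefficients $j^1_2$, $j^1_3$, $j^3_3$.

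First I would specialize to $t=\tfrac14$. Every term carrying a factor $(1-4t)$ or $(4t-1)$ then drops out, so the leading $K_{111}\theta^1$ and $K_1\theta^4$ contributions vanish and the current collapses to
$$
j^1=\bar j^2=\tfrac16 j^1_2\theta^2-\tfrac16 j^1_3\theta^3,\quad
j^4=-\tfrac16 j^1_2\theta^3,\quad
j^3=-\tfrac16 j^1_3\theta^1-\tfrac16\bar j^1_3\theta^2-\tfrac16 j^3_3\theta^3 .
$$
Next I would bring in the hypothesis. At $t=\tfrac14$ one has $j^1_2=\tfrac34(K_{11\bar1}+K_{1\bar1 1})$, which vanishes outright. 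Applying the coframe derivative in the $\theta^1=\om_1$ direction to the scalar identity $K_{11\bar1}+K_{1\bar1 1}\equiv 0$ yields $K_{11\bar1 1}+K_{1\bar1 11}\equiv 0$ (no commutator subtlety, since one differentiates a scalar identity), and because $j^1_3=\tfrac{3i}{4}(K_{11\bar1 1}+K_{1\bar1 11})$ at $t=\tfrac14$, also $j^1_3=0$; the term $-24t^2K_1(K_{11\bar1}+K_{1\bar1 1})$ in $j^3_3$ disappears as well. Hence $j^1=j^2=j^4=0$ and $j^3=-\tfrac16 j^3_3\theta^3$, so the current, and therefore the Bach tensor, is supported entirely in the $\theta^3\odot\theta^3$ direction. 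This gives the qualitative assertion that nine of the ten components vanish identically.

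Finally I would evaluate the surviving coefficient. Substituting $t=\tfrac14$, and using that $K_1$ is real so that $\overline{K_{11}}=K_{1\bar1}$ and $K_{11}K_{1\bar1}$ is real, gives
$$
j^3_3=-\tfrac94 K_{11}K_{1\bar1}+\tfrac{63}{16}K_2\bar K_2+\tfrac92 K_{2\bar1 0}+\tfrac{9i}{8}(K_{1\bar1}K_2-K_{11}\bar K_2).
$$
Since the Bach tensor is real and symmetric, the relevant component is the real part $\tfrac12(j^3_3+\overline{j^3_3})$ weighted by the overall current normalization $-\tfrac16$; the one non-self-conjugate term $\tfrac92 K_{2\bar1 0}$ then symmetrizes to the real combination $K_{2\bar1 0}+\bar K_{210}=2\,\mathrm{Re}(K_{2\bar1 0})$, while the remaining pieces are already self-conjugate. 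A short bookkeeping check shows $B_{1/4}=-\tfrac1{12}(j^3_3+\overline{j^3_3})\,\theta^3\odot\theta^3$ reproduces precisely $\tfrac{3}{32}\big(4K_{11}K_{1\bar1}+2i(K_{11}\bar K_2-K_{1\bar1}K_2)-7K_2\bar K_2-4(K_{2\bar1 0}+\bar K_{210})\big)\,\theta^3\odot\theta^3$.

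The step I expect to be the main obstacle is pinning down this last dictionary exactly: establishing that the real, symmetric component $B_{33}$ equals $-\tfrac16$ times the real part of the raw complex current coefficient $j^3_3$, which requires tracking the null-coframe index raising (where $\theta^3$ and $\theta^4$ are metric-dual in $g_t=2(\theta^1\theta^2+\theta^3\theta^4)$) through the Hodge star in $D*F$, and confirming that no vertical $\Om\dz$ contributions reappear. Everything else reduces to routine specialization once the explicit current formulas and the hypothesis (together with its first $\om_1$-derivative) are in hand.
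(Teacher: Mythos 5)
Your proposal is correct and follows essentially the paper's own route: the theorem is obtained by specializing the displayed Yang--Mills current $j^\mu$ (whose symmetrized real components give the Bach tensor) to $t=\tfrac14$, where every $(1-4t)$ factor drops, the hypothesis kills $j^1_2$ and---after one $E_1$-derivative of the scalar identity, which is legitimate exactly as you say---also $j^1_3$, leaving only the $\theta^3\odot\theta^3$ component proportional to $\mathrm{Re}(j^3_3)$, whose four surviving coefficients match the stated formula. The only soft spot, which you correctly flag, is the overall proportionality constant between the current and $B_{\mu\nu}$, which the paper never states explicitly either; your cross-check that a single normalization $-\tfrac16$ reproduces all four coefficients simultaneously settles it.
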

Apart from the structures with $\beta_{S_1}$ and $\beta_{S_2}$ we do
not know examples of structures satisfying condition $K_{11\bar{1}}+K_{1\bar{1}1}\equiv 0$.
\section{Application 2: Algebraically special spacetimes}\label{cabb}
All the metrics discussed in Section \ref{cab} are examples of 
\emph{algebraically special spacetimes}. These are 4-dimensional Lorentzian
metrics, whose Weyl tensor is \emph{degenerate} in an open region of
the spacetime. The algebraically special \emph{vacuum} 
(or in other words: \emph{Ricci flat}) metrics have the interesting property
that they define a congruence of shearfree and null geodesics in the 
underlying \emph{spacetime}. At this stage we must emphasize that the
congruence associated with such metrics lives in \emph{four} 
dimensions and the vanishing \emph{shear} and the \emph{geodesic}
condition is a \emph{four} dimensional notion here. Nevertheless we
observe that the 3-dimensional oriented congruences 
in our sense are related, at least at the level of the Lorentzian metrics
discussed so far, to an analogous notion in 3+1 dimensions, where the
metric is of Lorentzian signature. In this section we discuss this
relationship more closely. Note that in \emph{all} the examples of
Section \ref{cab} the \emph{four}-dimensional congruence of shearfree
null 
geodesics was always tangent to the vector field $k=\partial_r$. 

Before passing to the subject proper of this section we remark that
the algebraically special Lorentzian metrics are very important in
physics. To be more specific we consider the metric 
%/pawel/notebooks/denny/kerr.nb
\be
g=2\Big({\mathcal P}^2\mu\bar{\mu}+\lambda(\der r+{\mathcal
  W}\mu+\bar{\mathcal W}\mu+{\mathcal H}\lambda)\Big),\label{kke}\ee
where  
%\newpage
%$$\la=\der u-\frac{i(2M+(a+M)z\bar{z}}{z(1+\tfrac{k}{2}z\bar{z})^2}\der
%z+\frac{i(2M+(a+M)z\bar{z}}{\bar{z}(1+\tfrac{k}{2}z\bar{z})^2}\der
%\bar{z},\quad\quad\mu=\der z,$$
%$$P^2=\frac{r^2}{(1+\tfrac{k}{2}z\bar{z})^2}+\frac{(k
%  M-a+(kM+a)\tfrac{k}{2}z\bar{z})^2}{(1+\tfrac{k}{2}z\bar{z})^4},$$
%$$W=-\frac{ika\bar{z}}{(1+\tfrac{k}{2}z\bar{z})^2},$$
%$$H=-\frac{k}{2}+\frac{-mr+kM^2-aM\frac{1-\tfrac{k}{2}z\bar{z}}{1+\tfrac{k}{2}z\bar{z}}}{r^2+\frac{\big(k
%  M-a+(kM+a)\tfrac{k}{2}z\bar{z}\big)^2}{(1+\tfrac{k}{2}z\bar{z})^2}}$$
%Once again:
%$$g=2\Big(P^2\mu\bar{\mu}+\lambda(\der r+W\mu+\bar{W}\mu+H\lambda\Big),$$
%where  
$$\la=\der
u+\frac{i\big(2M+(a+M)z\bar{z}\big)}{z(1+\tfrac{K}{2}z\bar{z})^2}\der
z-\frac{i\big(2M+(a+M)z\bar{z}\big)}{\bar{z}(1+\tfrac{K}{2}z\bar{z})^2}\der
\bar{z},\quad\quad\mu=\der z,$$
$${\mathcal P}^2=\frac{r^2}{(1+\tfrac{K}{2}z\bar{z})^2}+\frac{\big(K
  M-a+(KM+a)\tfrac{K}{2}z\bar{z}\big)^2}{(1+\tfrac{K}{2}z\bar{z})^4},$$
\be
{\mathcal W}=\frac{iKa\bar{z}}{(1+\tfrac{K}{2}z\bar{z})^2},\label{kke1}\ee
$${\mathcal H}=-\frac{K}{2}+\frac{mr+KM^2-aM~\frac{1-\tfrac{K}{2}z\bar{z}}{1+\tfrac{K}{2}z\bar{z}}}{r^2+\frac{\big(K
  M-a+(KM+a)\tfrac{K}{2}z\bar{z}\big)^2}{(1+\tfrac{K}{2}z\bar{z})^2}},$$
and $m,a,M,K$ are \emph{real} constants.

This scary-looking metric has very interesting properties. First, it
admits a 4-dimensional congruence of null and shearfree geodesics,
which is tangent to the vector field $k=\partial_r$. Second, if $K=1$, it is
\emph{algebraically special}, actually of Petrov type $D$, and more
importantly, it is \emph{Ricci flat}. The parameter values $K-1=M=0$,
correspond to the celebrated \emph{Kerr metric}, describing a
gravitational field outside a \emph{rotating black hole}, with \emph{mass}
$m$ and \emph{angular momentum} parameter $a$. In this case the
angular momentum parameter $a$ measures the \emph{twist} of the
congruence tangent to $k$. If in addition $a=0$, the twist of
the congruence vanishes, and the metric becomes the \emph{Schwarzschild
  metric}. Third, in the $K-1=a=m=0$ case the metric is the \emph{Taub-NUT
  vacuum metric}, which is important in Relativity Theory because its serves as
a `counterexample for almost everything' \cite{misner}. Fourth, 
it should be also noted that if $M=0$ and the other parameters,
including $K$, are arbitrary, the metric is again type $D$ and \emph{Ricci
  flat}. Finally, we should mention that for general values of $K\neq 1$ and $M\neq 0$ the metric is algebraically \emph{general} and \emph{neither} Ricci flat \emph{nor} Einstein.         

From the point of view of our paper the relevance of the metric
(\ref{kke})-(\ref{kke1}) is self evident. The four dimensional
spacetime ${\mathcal M}$ on which the metric is defined, locally parametrized by
$(u,z,\bar{z},r)$, is locally a product ${\mathcal M}=M\times \bbR$, with $M$
being parametrized by $(u,z,\bar{z})$. The 3-dimensional manifold $M$
is then naturally equipped with the oriented congruence structure
$(M,\la,\mu)$, defined in terms of the 1-forms $\la,\mu$ from
(\ref{kke1}). Note that these forms, although defined on $\mathcal{M}$, do not
depend on the $r$ coordinate, and as such project to $M$. Note also
that the oriented congruence structure defined by these forms has
always vanishing shear $s\equiv 0$. It has nonvanishing twist,
with the exception of the Schwarzschild metric $a=M=0$, or the case
when $K=0$ and $M+a=0$. In this last case the metric is of Petrov type $D$, but  is \emph{neither} Ricci flat \emph{nor} Einstein.

Since in the case of \emph{Ricci flat} metrics
(\ref{kke})-(\ref{kke1}) only the Schwarzschild metric has the
corresponding structure of an oriented congruence with vanishing twist,
in the next sections we decided to make a systematic study of the Lorentzian
metrics (\ref{kke}) (not necessarily of the form (\ref{kke1})), with
forms $\la,\mu$ defining an oriented congruence structure in
\emph{three} dimensions which have vanishing shear, but nonvanishing
twist, only. Actually, for the sake of brevity, we only discuss the
case when the structural invariants $K_1$ and $K_2$ of the congruence
structures, as defined in Section \ref{ncrn}, satisfy $K_1\neq 0$,
$K_2\equiv 0$. 

\subsection{Reduction of the Einstein equations}
As we know from Section \ref{K1nK2} every structure
$(M,[\lambda,\mu])$ having $K_1\neq 0$, $K_2\equiv 0$ defines an
invariant coframe $(\omega,\om_1,\bar{\om}_1)$ on $M$ which satisfies
the system (\ref{systek}), (\ref{itg}). Given such a structure we
consider a 4-manifold ${\mathcal M}=\bbR\times M$
with a distinguished class of Lorentzian metrics. These 
metrics can be written using any representative of
a class $[\lambda,\mu]$. Since the invariant forms $(\om,\om_1)$
provide us with such a representative it is natural to use them,
rather than a randomly chosen pair $(\lambda,\mu)$. Thus, given a
structure $(M,[\lambda,\mu])$ having $K_1\neq 0$, $K_2\equiv 0$, we
write a metric on  
\be
{\mathcal M}=\bbR\times M\label{rro1}\ee 
as
\be\label{rro}
g=P^2~[~2\om_1\bar{\om}_1+2\om(\der r+W\om_1+\bar{W}\bar{\om}_1+H\om)~].
\ee 
Here the forms $(\om,\om_1,\bar{\om}_1)$ satisfy the system
(\ref{systek}), (\ref{itg}), $r$ is a coordinate along the $\bbR$ factor
in $\mathcal M$, and $P\neq 0$, $H$ (real) and $W$ (complex) are
arbitrary functions on $\mathcal M$. 

The null vector field $k=\partial_r$ is tangent to a congruence of
twisting and shear-free null geodesics in $\mathcal M$. This is a
distinguished geometric structure on $\mathcal M$.

Now we pass to the question if the metrics (\ref{rro}) may be
Einstein. To discuss this we need to specify what is the interesting
energy momentum tensor that will constitute the r.h.s. of the Einstein
equations. Since the only geometrically distinguished structure on
$\mathcal M$ is the shear-free congruence generated by $k=\partial_r$
it is natural to consider the Einstein equations in the form
\be
Ric(g)=\Phi k\odot k.\label{pur}
\ee   
If the real function $\Phi$ satisfies $\Phi>0$ the above equations
have the physical interpretation of a gravitational field of `pure
radiation' type in which the gravitational energy is propagated with
the speed of light along the congruence $k$. If $\Phi\equiv 0$ we have
just Ricci-flat metrics, which correspond to vacuum gravitational
fields. This last possibility is not excluded by our Einstein
equations. In the following analysis we will not insist on the
condition $\Phi\equiv 0$. 

At this point it is worthwhile to mentioned that a similar problem was
studied by one of us some years ago in \cite{phd}; see also the more
modern treatment in \cite{indiana}. Using the results
of \cite{indiana,phd} and the symbolic calculation program Mathematica, we reduced
the Einstein equations (\ref{pur}) to the following form:

First, it turns out that the Einstein equations (\ref{pur}) can be
fully integrated along $k$, so that the $r$ dependence of
the functions $P$, $H$, $W$ is explicitly determined. Actually we have:
\begin{eqnarray}
P&=&\frac{p}{\cos\frac{r}{2}}\nonumber\\
W&=&i\al{\rm e}^{-ir}+\bet\label{pur1}\\
H&=&-\frac{\bar{m}}{p^4}{\rm e}^{2ir}-\frac{m}{p^4}{\rm
  e}^{-2ir}+\tfrac{1}{2}\bar{\phi}{\rm e}^{ir}+
\tfrac{1}{2}\phi{\rm e}^{-ir}+\tfrac{1}{2}\chi,\nonumber
\end{eqnarray}
where the functions $p,\chi$ (real) and $\al,\bet,m$ (complex) do not
depend on the $r$ coordinate. Thus, using some of the Einstein
equations (\ref{pur}), one quickly reduces the problem from $\mathcal
M$ to a system of equations on the CR-manifold with preferred
splitting $(M,[\lambda,\mu])$.

Now we introduce a preferred set of vector fields
$(\partial_0,\partial,\bar{\partial})$ on $M$ defined as the
respective duals of the preferred forms $(\om,\om_1,\bar{\om}_1)$. Note
that this notation is in agreement with the notation of 
CR-structure theory. In particular $\bar{\partial}$ is the tangential
CR-operator on $M$, so that the equation for a CR-function $\xi$ on
$M$ is $\bar{\partial}\xi=0$.

With this notation the remaining Einstein equations (\ref{pur}) for
$\der s^2$ give first:
\begin{eqnarray}
\al&=&2(\partial\log p-c)\nonumber\\
\bet&=&2i(\partial\log p-2c-A_1)\label{pur2}\\
\phi&=&(\bar{\partial}+A_1+i\bar{B}_1+i\bar{\bet})\al-4\frac{m}{p^4}\nonumber\\
\chi&=&3\al\bar{\al}+2i(\partial+A_1-iB_1)\bar{\bet}-2i(\bar{\partial}+A_1+i\bar{B}_1)\bet\mp 1,\nonumber
\end{eqnarray}
where we have introduced a new unknown complex function $c$ on $M$ and used
the Cartan invariants $A_1>0$, $B_1$ and $\pm1$ of the system (\ref{systek}),
(\ref{itg}).

Finally the differential equations for the unknown functions $c,m$
and $p$ equivalent to the Einstein equations (\ref{pur}) are:
\begin{eqnarray}
(\partial-3A_1+iB_1)c-2c^2+a_{11}-A_1^2+\tfrac{i}{2}A_1(3B_1+\bar{B}_1)=0\label{pur31}\\\nonumber\\
(\bar\partial-6\bar{c})m=0\label{pur32}\\\nonumber\\
(\partial+3A_1-iB_1)\bar{\partial}p+(\bar{\partial}+3A_1+i\bar{B}_1)\partial
p+\nonumber\\
-3[(\partial+3A_1-iB_1)\bar{c}+(\bar{\partial}+3A_1+i\bar{B}_1)c+2c\bar{c}+\nonumber\\
\tfrac{8}{3}A_1^2+\tfrac{4}{3}a_{11}+\tfrac{2i}{3}A_1(\bar{B}_1-B_1)\pm\tfrac{1}{6}]p=\label{pur33}\\
-\frac{m+\bar{m}}{p^3}.\nonumber
\end{eqnarray}

We thus have the following theorem.
\begin{theorem}
Let $(M,[\lambda,\mu])$ be a structure of an oriented
congruence having vanishing shear, nonvanishing
twist and the invariants $K_1\neq 0$, $K_2\equiv 0$.
Then a Lorentzian metric associated with $(M,[\lambda,\mu])$
via (\ref{rro1})-(\ref{rro}) satsifies the Einstein equations
(\ref{pur}) if and only if the metric functions are given by means of
(\ref{pur1})-(\ref{pur2}) with the unknown functions $c,m$ (complex),
$p$ (real) on $M$ satsifying the differential equations (\ref{pur31})-(\ref{pur33}).
\end{theorem}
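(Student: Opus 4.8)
The plan is to reduce the Einstein equations (\ref{pur}) to equations on $M$ by a direct curvature computation in a coframe adapted to the distinguished null congruence, integrating out the $r$-dependence first. I would begin by passing to the null coframe $\theta^1=P\om_1$, $\theta^2=P\bar{\om}_1$, $\theta^3=P\om$, $\theta^4=P(\der r+W\om_1+\bar{W}\bar{\om}_1+H\om)$, in which the metric (\ref{rro}) takes the standard form $g=2(\theta^1\theta^2+\theta^3\theta^4)$. In this coframe $k=\partial_r$ is proportional to the frame vector dual to $\theta^4$, and its metric dual is proportional to $\theta^3$, so the source term on the right of (\ref{pur}) is a multiple of $\theta^3\odot\theta^3$; thus (\ref{pur}) is equivalent to the vanishing of every component of $\Ric(g)$ except the $\theta^3\odot\theta^3$ one. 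Using the structural equations (\ref{systek}) and their integrability conditions (\ref{itg}) for $(\om,\om_1,\bar{\om}_1)$, together with $\der(\der r)=0$ and $K_2\equiv 0$, I would compute $\der\theta^i$, solve the first structure equations for the Levi-Civita connection 1-forms, and then form the curvature 2-forms and contract to obtain $\Ric(g)$.

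The feature that makes the reduction work is that $k=\partial_r$ generates a shear-free, twisting congruence of null geodesics, so the $r$-dependence of the curvature is rigidly controlled. The first step is to isolate the purely radial part of the Einstein system, namely those components of (\ref{pur}) that are ordinary differential equations in $r$ for the metric functions $P$, $W$, $H$. Because the congruence twists (the term $i\om_1\dz\bar{\om}_1$ in (\ref{systek}) does not vanish), these radial equations are oscillatory rather than polynomial in $r$; integrating them produces exactly the $\cos\tfrac{r}{2}$ and ${\rm e}^{\pm ir},{\rm e}^{\pm 2ir}$ profiles recorded in (\ref{pur1}), with $r$-independent integration data $p,\al,\bet,\phi,\chi,m$ living on $M$. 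This is the stage at which I would lean on the earlier analyses \cite{indiana,phd} of twisting shear-free congruences to organize the integration and to fix the normalization of $P$.

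With the radial dependence determined, I would substitute (\ref{pur1}) back into the remaining components of (\ref{pur}) and separate them according to their independent functions of $r$. Matching the coefficient of each distinct $r$-harmonic converts the tensorial equations into identities on $M$: the vanishing of the off-diagonal and non-$\theta^3\theta^3$ Ricci components forces the algebraic expressions (\ref{pur2}) for $\al,\bet,\phi,\chi$ in terms of a single new complex function $c$, the Cartan invariants $A_1,B_1,a_{11}$ of (\ref{itg}), and $p,m$; the residual content is then precisely the differential system (\ref{pur31})-(\ref{pur33}). The converse direction is a verification: assuming (\ref{pur1})-(\ref{pur2}) with $c,m,p$ solving (\ref{pur31})-(\ref{pur33}), one substitutes and checks that all components of $\Ric(g)$ except the $\theta^3\odot\theta^3$ coefficient vanish, the surviving coefficient then determining the source function $\Phi$.

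The main obstacle I anticipate is the bookkeeping forced by the twist. In the twist-free Robinson-Trautman situation the radial equations integrate to polynomials in $r$ and the harmonic separation is immediate, whereas here the oscillatory profiles $\cos\tfrac{r}{2}$ and ${\rm e}^{\pm ir}$ recombine nonlinearly under the quadratic curvature terms, so tracking which $r$-harmonic each Einstein component contributes to, and confirming that all cross terms conspire to leave only (\ref{pur31})-(\ref{pur33}), is delicate. In practice I would carry out this separation and the final consistency check with the symbolic algebra support already used throughout the paper, repeatedly invoking the closed system (\ref{systek}), (\ref{itg}) to rewrite directional derivatives of the invariants $A_1,B_1$ and to eliminate the higher-order invariants that appear transiently in intermediate expressions.
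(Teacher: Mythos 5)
Your proposal follows essentially the same route as the paper: the authors likewise integrate the Einstein equations along the null direction $k=\partial_r$ first (leaning on the earlier analyses \cite{indiana,phd} of twisting shear-free congruences and on Mathematica), obtaining the explicit $r$-profiles (\ref{pur1}), and then separate the remaining components on $M$ to extract (\ref{pur2}) and the residual system (\ref{pur31})--(\ref{pur33}). Your identification of the source term as a multiple of $\theta^3\odot\theta^3$ and the harmonic-separation bookkeeping are exactly the steps the paper performs, so the proposal is correct and not a genuinely different argument.
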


\begin{remark}
Note that contrary to the invariants $(\om,\om_1,\bar{\om}_1)$ the
coordinate $r$, and in turn the differential $\der r$, has no geometric
meaning. Actually the coordinate freedom in choosing $r$ is $r\to r+f$,
where $f$ is any real function $f$ on $M$. This induces some gauge
transformations on the variables $\bet$ and $\chi$. Nevertherless
the equations (\ref{pur31})-(\ref{pur33}) are not affected by these transformations.
\end{remark}

\begin{remark}\label{solving}
Equations (\ref{pur31})-(\ref{pur33}) should be understood in the following way. Start
with a structure of an oriented congruence $(M,[\lambda,\mu])$ 
having vanishing shear, nonvanishing
twist and the invariants $K_1\neq 0$, $K_2\equiv 0$. Calculate its
invariants $(\om,\om_1,\bar{\om}_1)$,
$(\partial_0,\partial,\bar{\partial})$, $A_1$, $B_1$, $a_{11}$ of
(\ref{systek}), (\ref{itg}). Having this data write down equations
(\ref{pur31})-(\ref{pur33}) for the unknowns $c,m,p$. As a hint for solving these
equations observe that the equation (\ref{pur31}) involves only the 
unknown $c$. Thus solve it first. Once having the general solution for $c$
insert it to the equation (\ref{pur32}). Then this equation
becomes an equation for the unknown $m$. In particular $m=0$ is always
a solution of (\ref{pur32}). Once this equation for $m$ is solved, insert $c$ and $m$
to the equation (\ref{pur33}), which becomes a real, second order
equation for the real unknown $p$. In particular, if it 
happens that you are only interested in solutions
for which $m+\bar{m}=0$, this equation is a linear second order PDE on
$M$. For particular choices of $(M,[\lambda,\mu])$ it can be reduced to
well known equations of mathematical physics, such as for example the
hypergeometric equation \cite{phd}.  
\end{remark}
\begin{remark}
The unknown variable $m$ is related to a notion known to physicists as 
{\it complex mass}. For physically interesting solutions, such as 
for example the Kerr black hole, the imaginary part of $m$ is related to 
the mass of the gravitational source. The real part of $m$ 
is related to the so called NUT parameter. Moreover $m$
is responsible for algebraical specialization of the Weyl tensor of
the metric. If $m\equiv 0$ the metric is of type III, or its
specializations, in the
Cartan-Petrov-Penrose algebraic classification of gravitational fields.
\end{remark}
\subsection{Examples of solutions}
Here we give examples of metrics (\ref{rro}) satisfying the Einstein
equations (\ref{pur}). In all these examples the structures of oriented
congruences $(M,[\lambda,\mu])$ will be isomorphic to the structures 
with a 3-dimensional group of symmetries described by Proposition
\ref{pr3s1}. The invariant forms $(\om,\om_1,\bar{\om}_1)$ for these 
structures are:
\begin{eqnarray}
&&\om=\frac{2\tau^2}{1\mp 4\tau^2}(y^{-2(1\mp 2\tau^2)}\der
u-y^{-1}\der x),\nonumber\\
&&\om_1=\pm i\tau y^{-1}(\der x+i\der
y),\label{is}\\
&&\bar{\om}_1= \mp i\tau y^{-1}(\der x-i\der y).\nonumber
\end{eqnarray}
We recall that the real parameter $\tau$ is related to the invariants
$A_1$, $B_1$ of the structures (\ref{is}) via:
$$A_1=-\frac{\mp 1+2\tau^2}{2 \tau},\quad\quad B_1=i\tau.$$
Since these invariants are \emph{constant}, all the higher order
invariants for these structures, such as for example the $a_{11}$ in (\ref{itg}), are
\emph{identically vanishing}. Although Propsition \ref{pr3s1} excludes
the values $\tau^2=\tfrac{1}{2}$ in the upper sign case, we include it
in the discussion below. This value corresponds to $A_1=0$ and
therefore must describe one of the two nonequivalent 
structures $(M,[\lambda,\mu])$ of Example \ref{epsi}. From the two structures of this example, the one corresponding to $\tau^2=\frac12$ is defined by 
$(\epsilon_1,\epsilon_2)=(0,1)$. In particular, it has a strictly 
4-dimensional symmetry group. 

First we assume that the metric (\ref{rro}) has the same \emph{conformal
symmetries} as the structures (\ref{is}). This assumption, together with 
Einstein's equations (\ref{pur}), which are equivalent to the equations
(\ref{pur1})-(\ref{pur2}), (\ref{pur31})-(\ref{pur33}),
implies that \emph{all the metric functions} $p,m,c$ \emph{must be constant}. Then
the system (\ref{pur31})-(\ref{pur33}) reduces to the following 
algebraic equations for $m,p,c$:
\begin{eqnarray}
&&(-3A_1+iB_1)c-2c^2-A_1^2+\tfrac{i}{2}A_1(3B_1+\bar{B}_1)=0\label{pur313}\\\nonumber\\
&&\bar{c}m=0\label{pur323}\\\nonumber\\
&&3[(3A_1-iB_1)\bar{c}+(3A_1+i\bar{B}_1)c+2c\bar{c}+\label{pur333}\\
&&
\tfrac{8}{3}A_1^2+\tfrac{2i}{3}A_1(\bar{B}_1-B_1)\pm\tfrac{1}{6}]p=
\frac{m+\bar{m}}{p^3}.\nonumber
\end{eqnarray}
Thus we have two cases.
\begin{itemize}
\item Either $c=0$
\item or $m=0$.
\end{itemize}
Strangely enough in both cases equations (\ref{pur313})-(\ref{pur333})
admit solutions \emph{only} for the \emph{upper} sign in (\ref{pur333}).

If $c=0$ then we have only
one solution corresponding to 
$\tau=\pm\frac{1}{\sqrt{2}}$ 
with \emph{arbitrary} constant $p\neq 0$ and $m=\frac{p^4}{4}+i M$,
where $M$ is \emph{real} constant. The
corresponding metric 
$$\der s^2=\frac{p^2}{\cos^2\tfrac{r}{2}}[\frac{\der x^2 + \der y^2}{y^2} + 
      2(\frac{\der x}{y} - \der u )(\der r - 
            2\cos^2\tfrac{r}{2}(\cos r + 
                  4M\sin r)(\frac{\der x}{y} - \der u )]$$
is \emph{vacuum} i.e. it satisfies equations (\ref{pur}) with
      $\Phi\equiv 0$.

If $m=0$ then $p\neq 0$ is an arbitrary constant, and 
we have the following solutions:
\begin{itemize}
\item $\tau=\frac{\epsilon_1}{4}\sqrt{5+\epsilon_2\sqrt{17}}$,
  $c=-\frac{\epsilon_1}{\sqrt{5+\epsilon_2\sqrt{17}}}$,
\item $\tau=\frac{\epsilon_1}{2}\sqrt{\frac{1}{2}(7+\epsilon_2\sqrt{17})}$,
  $c=\frac{\epsilon_1}{4}\sqrt{\frac{1}{2}(7+\epsilon_2\sqrt{17})}(3+\epsilon_2\sqrt{17})$.
\end{itemize}
Here $\epsilon_1^2=\epsilon_2^2=1$. Sadly, irrespectively of the signs of
  $\epsilon_1,\epsilon_2$, all these solutions have
  $\Phi=const<0$, and as such do not correspond to physically
  meaningful sources.

%/home/pawel/notebooks/denny/sprawdzeniasekcja11mostgeneral.nb
As the next example we still consider structures $(M,[\la,\mu])$ with the
invariants (\ref{is}), and assume that the metrics have only \emph{two}
conformal symmetries $\partial_u$ and $\partial_x$. For simplicity we
consider only solutions with $m=0$ in (\ref{pur32}). Under these
assumptions we find that the general solution of
(\ref{pur31})-(\ref{pur33}) includes a free real
parameter $t$ and  is given by  
\be
c=\frac{-2+4\tau^2}{4\tau}+\frac{1-4\tau^2}{4\tau}\frac{1}{1-t y^{(4\tau^2-1)}},\label{ccec}\ee 
with the real function $p=p(y)$ satisfying a linear 2nd order ODE:
\begin{eqnarray}
&&4y(y-ty^{4\tau^2})^2~[~yp''~+~(4\tau^2-2)p'~]+\nonumber\\
&&[(-32 \tau^4+20\tau^2-1)y^2+4t^2(4\tau^4-7\tau^2+2)y^{8\tau^2}-\label{solp}\\
&&16t(8\tau^4-5\tau^2+1)y^{(4\tau^2+1)}]p=0.\nonumber
\end{eqnarray}
If this equation is satisfied, the only \emph{a'priori} nonvanishing
component of the Ricci tensor is 
\begin{eqnarray*}
&&R_{33}=-\tfrac18~\Big(\frac{\cos(\tfrac{r}{2})}{\tau
    (y-ty^{4\tau^2})p}\Big)^{4}\times\\
&&\Big(\big((8\tau^2-3)(128\tau^6-160\tau^4+92\tau^2-21)y^4+\\
&&8t^4\tau^2(32\tau^6+8\tau^4-28\tau^2+9)y^{16\tau^2}+\\
&&4t(8\tau^2-3)(256\tau^6-248\tau^4+58\tau^2+3)y^{3+4\tau^2}+\\
&&36t^2(4\tau^4+\tau^2-1)(32\tau^4-12\tau^2-1)y^{2+8\tau^2}+\\
&&16t^3\tau^2(128\tau^6-184\tau^4+122\tau^2-27)y^{1+12\tau^2}\big)p^2-\\
&&4y(y-ty^{4\tau^2})\big((8\tau^2-3)(16\tau^4-3)y^3+4t^3\tau^2(16\tau^4-3)y^{12\tau^2}+\\
&&6t(8\tau^2-3)y^{2+4\tau^2}+96t^2\tau^2(1-2\tau^2)^2y^{1+8\tau^2}\big)pp'+\\
&&4y^2(y-ty^{4\tau^2})^2\big((8\tau^2-3)y+4t\tau^2y^{4\tau^2}\big)^2{p'}^2
\Big).
\end{eqnarray*}
It follows that this $R_{33}$, with $p$ satisfying (\ref{solp}), \emph{may} identically vanish for some
values of parameter $\tau$. This happens \emph{only} when the parameter
$t=0$. If $$t=0$$ the values of $\tau$ for which $R_{33}$ may be
identically zero and
for which the function $p=p(y)$ satisfies (\ref{solp}) are:
$$\tau=\pm\tfrac12\sqrt{2},\quad\quad\tau=\pm\tfrac12\sqrt{\frac32},\quad\quad\tau=
\pm\tfrac12\sqrt{\frac53},\quad\quad\tau=\pm\tfrac12\sqrt{3},$$
$$\tau_-=\pm\tfrac12\sqrt{\tfrac16(11-\sqrt{13})},\quad\quad\quad\tau_+=\pm\tfrac12\sqrt{\tfrac16(11+\sqrt{13})}.$$
Of these \emph{distinguished} values the most interesting (modulo sign) are 
the last two, $\tau_-$ and $\tau_+$, 
%$$\tau_-=\pm\tfrac12\sqrt{\tfrac16(11-\sqrt{13})},\quad\quad{\rm
%  and}\quad\quad\tau_+=\pm\tfrac12\sqrt{\tfrac16(11+\sqrt{13})},$$
since for them the corresponding metrics (\ref{rro}) may be \emph{vacuum} and \emph{not}
conformally flat. Actually, restricting our attention to the plus
signs above and assuming $t=0$, we have the following possibilities:
\begin{itemize}
%/home/pawel/notebooks/denny/sprawdzeniasekcja11mostgeneralsp4.nb
%/home/pawel/notebooks/denny/sprawdzeniasekcja11mostgeneralsp5.nb
%/home/pawel/notebooks/denny/sprawdzeniasekcja11mostgeneralsp6.nb
\item $\tau_\varepsilon=\tfrac12\sqrt{\tfrac16(11+\varepsilon\sqrt{13})}$,
  $\varepsilon=\pm 1$; for these two values of $\tau$ the general
  solution of (\ref{solp})
  is $$p_\varepsilon=y^{\tfrac{1}{12}(1-\varepsilon\sqrt{13})}(s_2+s_1 y),$$ and the only potentially nonvanishing component of the Ricci tensor is 
$$R_{33}=-\tfrac49 (7+\varepsilon\sqrt{13})~s_2^2~y^{-\tfrac16(1-\varepsilon\sqrt{13})}~\Big(\frac{\cos\tfrac{r}{2}}{s_2+s_1y}\Big)^4.$$
This \emph{vanishes} when $s_2=0$. If $s_2=0$ the corresponding metrics
$g_\varepsilon$, as defined in (\ref{rro}), read
%%sprawdzone w 
%%/denny/einsteingrii.nb
\begin{eqnarray*}
&&g_{\varepsilon}=2{P}^2\Big(\om_1\bar{\om}_1
+\om\big(\der r+{W}\om_1+\bar{W}\bar{\om_1}+\tfrac{3+(9-20\tau_{\varepsilon}^2)\cos r}{12\tau_{\varepsilon}^2}\om\big)\Big),
\end{eqnarray*}
with
$${P}=\tfrac{s_1y^{2(1-\tau_{\varepsilon}^2)}}{\cos\tfrac{r}{2}},\quad\quad{W}=i\tfrac{2(20\tau_{\varepsilon}^2-9)+(8\tau_{\varepsilon}^2-9){\rm e}^{-ir}}{24\tau_{\varepsilon}^3},$$
and $\om,\om_1,\bar{\om}_1$ given by 
(\ref{is}).
For both values of $\varepsilon=\pm1$ the metric is 
\emph{Ricci flat} and of Petrov type $III$. In particular it is \emph{neither} flat, \emph{nor} of type $N$.
\end{itemize}
In all other cases of the \emph{distinguished} $\tau$s the corresponding
\emph{vacuum} metrics are the \emph{flat} Minkowski metrics. In fact,
\begin{itemize}
%/home/pawel/notebooks/denny/sprawdzeniasekcja11mostgeneral7.nb
\item if $\tau=\tfrac12\sqrt{\frac32}$, the general solution to
  (\ref{solp}) is $$p=s_1\sqrt{y}+s_2y,$$
and the corresponding metric (\ref{rro}) is flat.
%/home/pawel/notebooks/denny/sprawdzeniasekcja11mostgeneralsp1.nb:
\item if $\tau=\tfrac12\sqrt{\frac53}$, the general solution
  to (\ref{solp}) is $$p=y^{\tfrac23}(s_1+s_2\log y),$$ and the
  potentially nonvanishing Ricci component $R_{33}$ is
$$R_{33}=-\tfrac{8}{25}s_2(2s_1+s_2+2s_2\log
  y)\Big(\frac{\cos\tfrac{r}{2}}{(s_1+s_2\log
    y)y^{\tfrac13}}\Big)^4.$$
This vanishes when $s_2=0$. In such case 
  the metric is flat.
%/home/pawel/notebooks/denny/sprawdzeniasekcja11mostgeneralsp2.nb
\item if $\tau=\tfrac12\sqrt{2}$, the general
  solution of (\ref{solp}) is $$p=\sqrt{y}(s_1+s_2\log y),$$ and 
$$R_{33}=-\frac{2s_2^2}{y}~\Big(\frac{\cos\tfrac{r}{2}}{s_1+s_2\log
    y}\Big)^4;$$
this vanishes when $s_2=0$; in such case the metric is flat.
%/home/pawel/notebooks/denny/sprawdzeniasekcja11mostgeneralsp3.nb
\item if $\tau=\tfrac12\sqrt{3}$, the general
  solution of (\ref{solp}) is $$p=s_1y+s_2y^{-1},$$ and 
$$R_{33}=-32s_2^2y^2~\Big(\frac{\cos\tfrac{r}{2}}{s_2+s_1y^2}\Big)^4;$$
this vanishes when $s_2=0$; in such case the metric is the flat
Minkowski metric.
\end{itemize}

%/sprawdzeniasekcja11mostgeneral1lambda.nb
We close this section with an example of a metric that goes a bit
beyond the formulation of the Einstein equations presented
here. Remaining with the structures of an oriented congruence with the
upper sign in (\ref{is}), we take $c$ as in (\ref{ccec}) with $t=0$, and 
consider the metric (\ref{rro}), (\ref{pur1}), (\ref{pur2}) with a 
\emph{constant} function $p$ given by 
$$p=\frac{\sqrt{3}}{4
  s\tau}\sqrt{\varepsilon(-1+20\tau^2-32\tau^4)}.$$
Here the $\varepsilon$ is $\pm 1$, and is chosen to be such that the
value $\varepsilon(-1+20\tau^2-32\tau^4)$ is positive; $s$ is a
nonzero constant. A short
calculation shows that the Ricci tensor for this metric has the
following form
$$Ric=(\tau^2-1)(8\tau^2-5)\frac{16\Lambda(4\tau^2+1)\cos^4\tfrac{r}{2}}{3\tau^2(1-20\tau^2+32\tau^4)}k\odot
k+\Lambda g.$$
Thus, this metric is \emph{Einstein}, with cosmological constant equal
to $\Lambda=\varepsilon s^2$, provided that  
$$\tau=\pm1,\quad\quad {\rm or}\quad\quad
\tau=\pm\tfrac12\sqrt{\frac{5}{2}}.$$
It is remarkable that the Einstein metric 
%%sprawdzone 
%%/notebooks/denny/einsteingriilambda.nb
$$g=-\frac{3}{5\Lambda\cos^2\tfrac{r}{2}}\Big(\om_1\bar{\om}_1+\om\big(\der r+
\tfrac{i(2{\rm e}^{-ir}+5)}{\sqrt{10}}\om_1-\tfrac{i(2{\rm
    e}^{ir}+5)}{\sqrt{10}}\bar{\om}_1+\tfrac{7}{10}(3+2\cos r)\om\big)\Big),$$
corresponding to
$\tau=\pm\tfrac12\sqrt{\frac{5}{2}}$, is of \emph{Petrov type} $N$ with the
quadruple principal null direction of the Weyl tensor being \emph{twisting}. It was first obtained by Leroy \cite{leroy} and
recently discussed in \cite{nurtn}. The Einstein metric 
%%sprawdzone 
%%/notebooks/denny/einsteingriilambda.nb
$$g=-\frac{39}{8\Lambda\cos^2\tfrac{r}{2}}\Big(\om_1\bar{\om}_1+\om\big(\der r+
\tfrac{i({\rm e}^{-ir}+4)}{2}\om_1-\tfrac{i({\rm
    e}^{ir}+4)}{2}\bar{\om}_1+\tfrac{5}{8}(3+2\cos r)\om\big)\Big),$$
corresponding
to $\tau=\pm1$ is of Petrov type $III$. 
\subsection{Discussion of the reduced equations} 
Here we discuss the integration procedures for equations
(\ref{pur31})-(\ref{pur33}) 
along the lines indicated in Remark \ref{solving}. We start with 
equation (\ref{pur31}). This is an equation for the unknown
$c$. Remarkably, the existence of a function $c$ satisfying this equation
is equivalent to an existence of a certain CR function $\eta$ on
$M$. To see this we proceed as follows. 
We consider a 1-form $\Pi$ on $M$ given by
\be
\Pi=\om_1+2i(A_1+\bar{c})\om,\label{sso}
\ee
where $c$ is an arbitrary complex function on $M$. Of course 
\be\Pi\dz\bar{\Pi}\neq 0,
\label{sso1}\ee
since otherwise the forms $\om_1$ and $\bar{\om}_1$ would not be independent. 
Now using the differentials $\der\om$, $\der\om_1$, $\der A_1$
given in (\ref{systek}), (\ref{itg}), we easily find that 
$$\der\Pi\dz\Pi=2i~[~(\bar{\partial}-3A_1-i\bar{B}_1)\bar{c}-2\bar{c}^2+a_{11}-A_1^2-\tfrac{i}{2}A_1(3\bar{B}_1+B_1)~]\om_1\dz\bar{\om}_1\dz\om.$$
Thus our equation (\ref{pur31}) is satisfied for $c$ if and only
if $\der\Pi\dz\Pi=0$. Due to 
our Lemma \ref{le}, $\Pi$ satisfying $\der\Pi\dz\Pi=0$ defines a complex valued
function $\eta$ on $M$ such that $\Pi=h\der\eta$. Because of (\ref{sso1})
we have  $h\bar{h}\der\eta\dz\der\bar{\eta}\neq 0$. Furthermore, since 
$\Pi$ is given by (\ref{sso}) then $\Pi\dz\om\dz\om_1=0$, which after
factoring out by $h$ gives $\der\eta\dz\om\dz\om_1=0$. Thus $\eta$ is a
CR-function on $M$. 

Conversely, suppose that we have a CR-function $\eta$ on $M$ such that
\be\der\eta\dz\der\bar{\eta}\neq 0.\label{ndeg}\ee Then the
three one forms $\om_1$, $\om$ and $\der\eta$ are linearly dependent
at each point. Thus there exist complex functions $x$, $y$ on $M$
such that \be\der\eta=x\om_1+y\om.\label{te}\ee Due to the nondegenarcy condition
(\ref{ndeg}) we must have
$x\bar{x}\om_1\dz\bar{\om}_1+x\bar{y}\om_1\dz\om-\bar{x}y\bar{\om_1}\dz\om\neq
0$, so that the complex function $x$ must be nonvanishing. In such
case we may rewrite (\ref{te}) in the more convenient form
$h\der\eta=\om_1+\bar{z}\om$, where $h=1/x$ and $\bar{z}=y/x$. Now,
defining $c$ to be $c=\frac{iz}{2}-A_1$, we see that the trivially satisfied
equation
$(h\der\eta)\dz\der(h\der\eta)=0$ implies that the function $c$ must
satisfy equation (\ref{pur31}). 
Summarizing we have the following proposition.
\begin{proposition}
Every solution $\eta$ of the
tangential CR equation $\bar{\partial}\eta=0$  satisfying
$\der\eta\dz\der\bar{\eta}\neq 0$ defines a solution $c$ of
equation (\ref{pur31}). Given $\eta$, the function $c$
satisfying equation (\ref{pur31}) is
defined 
by \be
c=\frac{i}{2}\frac{\bar{y}}{\bar{x}}-A_1,\label{fc}\ee
where $\der\eta=x\om_1+y\om$. Also the converse is true: every solution 
$c$ of equation (\ref{pur31}) defines a CR function $\eta$ such that 
$\der\eta\dz\der\bar{\eta}\neq 0$. 
\end{proposition}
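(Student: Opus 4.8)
The plan is to treat the auxiliary $1$-form $\Pi=\om_1+2i(A_1+\bar{c})\om$ of (\ref{sso}) as a dictionary translating solutions $c$ of (\ref{pur31}) into CR functions $\eta$ and back. The first and only computational step I would carry out is to differentiate $\Pi$ using the structural equations (\ref{systek}) together with the integrability law (\ref{itg}) for $A_1$, and to check that $\der\Pi\dz\Pi$ is a multiple of the volume $3$-form $\om_1\dz\bar{\om}_1\dz\om$ whose coefficient is exactly $2i$ times the complex conjugate of the left-hand side of (\ref{pur31}). Because $a_{11}$ and $A_1$ are real, this conjugation is clean, so the upshot is the equivalence I will use throughout: \emph{$c$ solves (\ref{pur31}) if and only if $\der\Pi\dz\Pi=0$}.

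Granting this equivalence, the implication ``$c\Rightarrow\eta$'' is then essentially Lemma \ref{le}. If $c$ solves (\ref{pur31}) then $\der\Pi\dz\Pi=0$, so the lemma produces a complex function $\eta$ with $\Pi=h\der\eta$ for some nonvanishing $h$. I would then verify the two advertised properties of $\eta$. The nondegeneracy $\der\eta\dz\der\bar{\eta}\neq 0$ follows from (\ref{sso1}) via $\Pi\dz\bar{\Pi}=h\bar{h}\,\der\eta\dz\der\bar{\eta}$; and $\eta$ is CR because $\Pi$ lies in the span of $\om_1$ and $\om$, so $\Pi\dz\om\dz\om_1=0$, and dividing out $h$ gives $\der\eta\dz\om\dz\om_1=0$, which is precisely $\bar{\partial}\eta=0$.

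For the converse ``$\eta\Rightarrow c$'' I would begin with a CR function $\eta$ satisfying $\der\eta\dz\der\bar{\eta}\neq 0$. The CR condition confines $\der\eta$ to the span of $\om_1$ and $\om$, so I may write $\der\eta=x\om_1+y\om$ as in (\ref{te}); computing $\der\eta\dz\der\bar{\eta}$ shows its $\om_1\dz\bar{\om}_1$ coefficient is $x\bar{x}$, so nondegeneracy forces $x\neq 0$. Dividing by $x$ normalizes this to $h\der\eta=\om_1+\bar{z}\om$ with $h=1/x$, $\bar{z}=y/x$, and setting $c=\tfrac{i}{2}z-A_1=\tfrac{i}{2}\tfrac{\bar{y}}{\bar{x}}-A_1$ makes $h\der\eta$ literally equal to $\Pi$. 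Since $\der\der\eta=0$, the identity $(h\der\eta)\dz\der(h\der\eta)=0$ is automatic, hence $\der\Pi\dz\Pi=0$, and by the equivalence above $c$ satisfies (\ref{pur31}).

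I expect the genuine obstacle to be only the very first step, the explicit evaluation of $\der\Pi\dz\Pi$; but this is already recorded in the discussion preceding the statement, so I may assume it. Everything else is wedge-algebra bookkeeping. The one point I would flag as requiring care is the normalization identity $2i(A_1+\bar{c})=\bar{z}$ used in the converse: it is this matching that both pins down the precise formula $c=\tfrac{i}{2}\tfrac{\bar{y}}{\bar{x}}-A_1$ and shows the correspondence is a genuine bijection rather than a one-sided sufficiency.
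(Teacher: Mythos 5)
Your proposal is correct and follows the paper's own argument essentially verbatim: the same auxiliary form $\Pi=\om_1+2i(A_1+\bar{c})\om$, the same computation showing $\der\Pi\dz\Pi$ equals $2i$ times the conjugate of the left-hand side of (\ref{pur31}) against the volume form, the same use of Lemma \ref{le} for the forward direction, and the same normalization $h\der\eta=\om_1+\bar{z}\om$ with $c=\tfrac{i}{2}z-A_1$ for the converse. Nothing to add.
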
   

\begin{remark}
Recall that the structures
$(M,[\lambda,\mu])$ satisfying the system (\ref{systek}), (\ref{itg})
admit at least one CR-function $\zeta$, since they have zero shear
$s\equiv 0$. Associated to $\zeta$, by the above Proposition, there
should be a solution $c$ of the Einstein equation (\ref{pur31}). One checks by direct calculation that 
$$c=-A_1$$
automatically satisfies (\ref{pur31}). And this is the solution $c$
asociated with $\zeta$. This is consistent with
formula (\ref{fc}), since $y\equiv 0$ means that
$\der\eta\dz\der\zeta\equiv 0$ (compare with (\ref{te})).
\end{remark}

We now pass to the discussion of the second Einstein equation
(\ref{pur32}). 
Equation (\ref{pur32}), the equation for the function $m$,
has a principal part resembling the tangential CR-equation. Remarkably
its solutions $m$ are also expressible in terms of CR-functions. To
see this consider an arbitrary complex valued function $\xi$ and
define $m$ to be
\be
m=[~\partial_0\xi-2i(A_1+\bar{c})\partial\xi+2i(A_1+c)\bar{\partial}\xi~]^3.\label{mm1}
\ee
Here $c$ is supposed to be a solution to the first Einstein equation
(\ref{pur31}). 
Observe, that since the vector field
$\partial_0-2i(A_1+\bar{c})\partial+2i(A_1+c)\bar{\partial}$ is real,
then given $m$ one can always locally solve for $\xi$. Our
goal now is to show that if $\xi$ is a CR-function on $M$, then $m$
given by (\ref{mm1}) satisfies equation (\ref{pur32}). To
prove this one inserts (\ref{mm1}) into 
equation (\ref{pur32}) and commutes the operators
$\bar{\partial}\partial_0$ and $\bar{\partial}\partial$. After this is
performed the equation (\ref{pur32}) for $m$ becomes the
following equation for $\xi$:
$$(\partial_0+2i\bar{\partial}(A_1+c)+2i(A_1+c)\bar{\partial}-2i(A_1+\bar{c})\partial-4i\bar{c}(A_1+c)+A_1-iB_1)\bar{\partial}\xi=0.$$
This, in particular, means that if $\xi$ is a CR-function then this
equation is satisfied automatically. Thus given a CR-function $\xi$, 
via (\ref{mm1}), we constructed $m$ which satisfies equation (\ref{pur32}). To see that all solutions $m$ of (\ref{pur32}) can be constructed in this way is a bit
more subtle (see \cite{indiana}).

\end{document}